\documentclass[12pt]{amsart}

\usepackage{amsmath,graphicx}
\usepackage{adjustbox}
\usepackage{amssymb}
\usepackage{tikz, tikz-cd}
\usepackage{fullpage}
\usepackage[all]{xy}
\usepackage[margin=1in]{geometry}
\usepackage{amsthm}
\usepackage{amsfonts}
\usepackage{bbm}
\usepackage{comment}
\usepackage{amsmath}
\usepackage{amsthm}
\usepackage{bbm}
\usepackage{array} 
\usepackage{color}
\usepackage[utf8]{inputenc}
\usepackage[english]{babel}
\usepackage{hyperref}
\newcolumntype{L}{>{$}l<{$}}
\DeclareMathSymbol{\shortminus}{\mathbin}{AMSa}{"39}

\newtheorem{theorem}{Theorem}[section]
\newtheorem{lemma}[theorem]{Lemma}

\newtheorem{corollary}[theorem]{Corollary}
\newtheorem{conjecture}[theorem]{Conjecture}
\newtheorem{proposition}[theorem]{Proposition}
\theoremstyle{definition}  
\newtheorem{definition} [theorem] {Definition}

\newtheorem{remark} [theorem] {Remark}
\newtheorem{question} [theorem] {Question}

\theoremstyle{definition}

\newcommand{\F}{{\mathbb{F}}}

\newcommand{\Q}{{\mathbb{Q}}}

\newcommand{\R}{{\mathbb{R}}}
\newcommand{\Z}{{\mathbb{Z}}}

\newcommand{\Spin}{\text{Spin}}

\newcommand{\rot}{\text{rot}}
\newcommand{\tb}{\text{tb}}
\newcommand{\tbb}{\overline{\mathrm{tb}}}
\newcommand{\CF}{\widehat{CF}}

\newcommand{\CFKINFTY}{{CFK}^{\infty}}
\newcommand{\HFKINFTY}{{HFK}^{\infty}}
\newcommand{\HF}{HF}
\newcommand{\HFhat}{\widehat{HF}}

\newcommand{\s}{\mathfrak{s}}
\newcommand{\rk}{\text{rk}}

\newcommand{\Gh}{\widehat{\Gamma}}
\newcommand{\Mrel}{\mathcal{M}_{\mathrm{rel}}}
\newcommand{\Drel}{\Delta_{\mathrm{rel}}}
\newcommand{\Tpun}{T^{\bullet}}
\newcommand{\CFDhat}{\widehat{CFD}}
\newcommand{\CFAhat}{\widehat{CFA}}
\DeclareMathOperator{\Th}{th}

\usepackage{comment}
\usepackage[abs]{overpic}
\usepackage{graphicx}
\newcommand{\din}{d}

\title{Contact cosmetic surgery on Legendrian knots in  integer homology sphere $L$-spaces }
\author{Apratim Chakraborty} 
\address[]{IAI, TCG CREST, Kolkata, India}

\email{apratimn@gmail.com}

\author{Swarup Kumar Das}
\address[]{IAI, TCG CREST, Kolkata, India}
\email{swarupdas.math@gmail.com}

\author{Tanushree Shah}
	\address{Department of Mathematics \\ Chennai Mathematical Institute \\ India}
	\email{tanushreebshah@gmail.com}

\begin{document}
\begin{abstract}

We extend the study of contact cosmetic surgeries to Legendrian knots in integer homology sphere $L$-spaces . We prove that the contact cosmetic surgery conjecture holds for all non-trivial Legendrian knots in this setting, with the possible exception of Lagrangian slice knots. Our argument adapts and refines techniques from the $S^3$ case to the broader context of $L$-spaces, incorporating constraints arising from Heegaard Floer theory.

\end{abstract}
\maketitle

\section{Introduction}

In this paper, we establish that a contact analogue of the cosmetic surgery conjecture holds for Legendrian knots in integer homology sphere $L$-spaces, with the possible exception of a small and geometrically constrained family of knots.

We begin by recalling the smooth cosmetic surgery conjecture in a general setting. Let $K$ be a knot in a closed oriented $3$–manifold $Y$. Two surgeries $Y_K(r)$ and $Y_K(r')$ are called \textit{cosmetic} if they are diffeomorphic, and \textit{truly cosmetic} if they are orientation–preserving diffeomorphic. In the case $Y=S^3$, it is known that while cosmetic surgeries exist, the only known truly cosmetic surgeries occur on the unknot. The cosmetic surgery conjecture predicts that non-trivial knots admit no truly cosmetic surgeries. This conjecture has been extensively studied in $S^3$, particularly using Heegaard Floer homology and correction terms, and these results play an essential role in our arguments.

We now turn to the contact setting. Let $(Y,\xi)$ be a tight contact structure on an $L$-space integer homology sphere, and let $L \subset (Y,\xi)$ be a Legendrian knot. Contact $(r)$–surgery on $L$ is obtained by removing a standard neighborhood of $L$ and gluing back a solid torus whose meridian has slope $r+\tb(L)$, extending the contact structure over the torus as any tight contact structure. As in the case of $S^3$, when $r \neq 1/n$, there are generally multiple possible contact surgeries corresponding to a single smooth surgery coefficient.

Two contact surgeries on $L$ with distinct smooth coefficients are said to be \textit{cosmetic} if the resulting contact manifolds are contactomorphic. Since contactomorphisms are orientation-preserving, we do not distinguish between cosmetic and truly cosmetic surgeries in the contact category.

This leads to the following natural generalization of the cosmetic surgery conjecture to the contact setting.

\begin{conjecture}[Contact cosmetic surgery conjecture]
Let $(Y,\xi)$ be an integer homology sphere. Any Legendrian knot in $(Y,\xi)$ that is not smoothly an unknot admits no cosmetic contact surgeries.
\end{conjecture}

This conjecture is a natural extension of the smooth cosmetic surgery conjecture and is closely related to understanding the extent to which contact surgery descriptions of contact manifolds are unique.
As a first step toward this problem, it is useful to isolate a class of knots for which the surgery behavior is more tractable and closely tied to the classical $S^3$ setting. This leads to the notion of local knots, where the surgery problem effectively reduces to one in $S^3$.
\subsection*{Local knots}

A knot $K \subset Y$ is said to be \emph{local} if it is contained
in an embedded $3$–ball in $Y$. Equivalently, $K$ determines
a knot in $S^3$. In this situation, performing smooth $r$–surgery
along $K$ affects only the $3$–ball containing the knot,
and the resulting manifold is diffeomorphic to
$Y \# S^3_K(r)$,
where $S^3_K(r)$ denotes the manifold obtained by $r$–surgery
on the corresponding knot in $S^3$. 

\begin{remark}
In the case of local knots, the limiting conditions
remain unchanged from the $S^3$ setting,
since the ambient manifold differs only by a
connected sum.
Thus, the contact manifolds obtained are of the form $Y \# S^3_K(r)$,
and the analysis reduces to that in
\cite{ES}.
\end{remark}

We now establish an obstruction to performing contact surgery on more general classes of knots in integer homology sphere $L$-spaces. The $L$-space condition imposes strong Floer-theoretic constraints, and these constraints allow us to adapt and refine arguments from the $S^3$ case. Our main result establishes this conjecture for almost all Legendrian knots.
\begin{remark}
    
Throughout this paper, $Y$ will denote an $L$-space integer homology $3$–sphere (ZHS$^3$). We fix once and for all a positively co-oriented tight contact structure $\xi$ with nonvanishing Heegaard Floer contact invariant on $Y$ and work in the contact manifold $(Y,\xi)$. 

%Unless otherwise stated, $Y$  will have contact surgery diagrams which consist solely of contact $(-1)$–surgeries. In particular, every contact manifold under consideration is obtained via Legendrian surgery and is therefore Stein fillable.

One example to keep in mind is the Poincar\'e homology sphere $-P^3$ equipped with a positively co-oriented tight contact structure arising as the boundary of its Stein filling.
\end{remark}

\begin{theorem}\label{contactcosmeticthm}
Let $(Y,\xi)$ be a contact integer homology sphere $L$-space with non-trivial Heegaard Floer contact invariant. The contact cosmetic surgery conjecture holds for all non-trivial Legendrian knots in $(Y,\xi)$, except possibly for $\pm 2$ surgery on a Legendrian knot $L$ whose smooth knot type $K$ satisfies
$\tau_Y(K)=0, \quad g(K)=2, \quad \overline{\tb}(K)=-1$. 
\end{theorem}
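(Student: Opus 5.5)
Suppose contact surgeries with smooth coefficients $r\neq r'$ on $L$ give contactomorphic manifolds $(Y_K(r),\xi_r)\cong(Y_K(r'),\xi_{r'})$. The plan is to combine smooth Heegaard Floer obstructions for cosmetic surgery in $L$-space homology spheres with contact-topological constraints coming from the contact invariant and the Euler class. The end result should be that everything is ruled out except the $\pm 2$ case with $\tau_Y(K)=0$, $g(K)=2$, $\overline{\tb}(K)=-1$.

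\textbf{Step 1 (smooth reduction).} Since a contactomorphism is an orientation-preserving diffeomorphism, $Y_K(r)\cong Y_K(r')$ is truly cosmetic. I would invoke the $L$-space analogue of the Ni--Wu and Hanselman results. Because $Y$ is an $L$-space homology sphere, $CFK^\infty(Y,K)$ behaves as in $S^3$ for the mapping cone, so $d$-invariants and $\HF^+$ rank comparisons apply. They force $r'=-r$ and $\tau_Y(K)=0$, and in fact $V_0=0$. Hanselman's immersed-curve argument, which works over any $L$-space homology sphere since $\CFDhat$ of the complement is determined by the knot Floer complex, further restricts to $\{r,r'\}=\{\pm2\}$ with $g(K)=2$, or $\{\pm 1/q\}$ with $q\le$ a bound depending on $g$ and $th(K)$. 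Linking form and homology considerations handle $|H_1|$ matching.

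\textbf{Step 2 (contact constraints).} Next I would eliminate the $\pm 1/n$ cases and force $\overline{\tb}=-1$.

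For $r=-1/n<0$: contact $(-1/n)$-surgery is unique and is Legendrian surgery on $n$ pushoffs, so its contact invariant is nonzero. On the other side, contact $(+1/n)$-surgery with $\tb(L)$ has smooth coefficient $\tb+1/n$. With $\tau_Y(K)=0$, the bound $\tb(L)+|\rot(L)|\le 2\tau_Y(K)-1=-1$ holds; this Plamenevskaya-type bound holds in $(Y,\xi)$ with $c(\xi)\ne0$. Combining this with the surgery formula for the contact invariant (Golla/Mark--Tosun style, adapted to $L$-space $Y$), positive contact surgery $r_c$ has vanishing invariant unless $r_c+\tb > 2\tau-1$ in the appropriate sense. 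So one side has $c\ne0$ and the other $c=0$, a contradiction.

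For the $\pm2$ smooth coefficients we have contact coefficients $r_c=\pm2-\tb(L)$. The negative side $-2$ needs $\tb(L)\ge -1$ to be achievable with the nonvanishing option, otherwise we track rotation classes. The positive side $+2$ with $\tb\le-1$ gives contact coefficient $\ge 3$, whose invariant again vanishes by the Mark--Tosun criterion unless $\tb=\overline{\tb}=2\tau-1=-1$. Comparing the Euler classes/$d_3$ invariants of the two sides rules out the remaining $\tb<-1$ cases. For these I would compute $d_3$ via the standard formula $d_3=\frac14(c^2-3\sigma-2\chi)+q$ in terms of $\tb$ and $\rot$, and show that equality with the negative side forces $\tb=-1$, $\rot=0$.

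\textbf{Main obstacle.} I expect the hardest part to be Step 1: transplanting the full Ni--Wu/Hanselman cosmetic surgery restrictions from $S^3$ to arbitrary $L$-space homology spheres. This requires the immersed-curve invariant to have the same properties and the relevant inequalities relating $g$, $\tau_Y$, $\overline{\tb}$ to hold. The second delicate point is the contact invariant vanishing criterion in $Y$ rather than $S^3$. I would derive it from the mapping cone and naturality of $c(\xi)$ under the surgery cobordism, using $c(\xi)\ne0$ and $HF(Y)\cong\F$.
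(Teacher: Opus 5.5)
\textbf{Verdict: your Step 1 is fine, but Step 2 does not rule out the $\pm 1/n$ slopes.} Your Step 1 matches what the paper does: extend Ni--Wu to get $\tau_Y(K)=0$, hence $\tb(L)+|\rot(L)|\le -1$ by Hedden's bound; then extend Hanselman to get slopes $\{\pm 2\}$ with $g(K)=2$, or $\{\pm 1/q\}$.

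The gap is in Step 2, where you use the contact invariant to kill the $\pm 1/n$ slopes. The cosmetic slopes are smooth coefficients with respect to the Seifert framing, so the contact coefficients are $\pm 1/n-\tb(L)$. Since $\tb(L)\le -1$, \emph{both} contact coefficients are positive. For $\tb(L)=-k$, smooth $-1/n$ is contact $(kn-1)/n$ surgery and smooth $+1/n$ is contact $(kn+1)/n$ surgery; smooth $-1$ on a $\tb=-1$ knot is the undefined contact $(0)$ surgery. So neither side is Legendrian surgery on pushoffs, and neither side has an automatically nonzero invariant. The contact $(\pm 1/n)$ surgeries you describe have smooth coefficients $\tb(L)\pm 1/n$. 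These form a pair $\{\pm 1/q\}$ only if $\tb(L)=0$, which you have already excluded.

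Even after correcting this, a Golla/Mark--Tosun type criterion (granting its extension to $Y$) gives only partial information:
\begin{itemize}
\item the smooth $-1/n$ side has vanishing invariant;
\item the smooth $+1/n$ side has nonvanishing invariant only for special representatives and stabilization choices, roughly $\tb(L)-\rot(L)=2\tau_Y(K)-1=-1$ with all stabilizations of one sign.
\end{itemize}
For example, for $\tb(L)=-3$, $\rot(L)=0$, or for mixed stabilizations, both invariants vanish and you get no obstruction. So your argument leaves the $\pm 1/n$ family open.

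What is needed, and what the paper supplies, is a finer invariant. For each case $\tb(L)=-1$, $\tb(L)=-2$, $\tb(L)<-2$, the paper:
\begin{itemize}
\item draws explicit contact $(\pm1)$ surgery diagrams for the smooth $\pm 1/n$ surgeries;
\item computes the linking matrices, signatures, and $c_1^2=\mathbf{r}^TQ^{-1}\mathbf{r}$ over all allowed rotation vectors;
\item checks that the $d_3$ values never coincide: $1$ versus $0$ for $\tb=-1$, a parity mismatch for $\tb=-2$, and no integer solutions for $\tb<-2$.
\end{itemize}
To do this in $Y$ rather than $S^3$, the paper first handle-slides $L$ so that it is algebraically unlinked from a surgery presentation of $(Y,\xi)$. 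This makes the linking matrix block diagonal, so the contribution of $(Y,\xi)$ splits off from $d_3$. Your proposal has no analogue of this step.

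Similarly, in the $\pm 2$ case you assert, without computing, that comparing $d_3$ forces $\tb=-1$, $\rot=0$. The paper imports these computations from Etnyre--Shah and notes that for $\tb=-2$ the smooth $-2$ side is the undefined contact $(0)$ surgery. It is precisely because $d_3$ does not separate all the resulting contact structures in the $\pm 2$ case that the exception in the theorem survives.
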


Here $\tau_Y(K)$ denotes the Heegaard Floer smooth $\tau$–invariant of the knot in $Y$. The proof proceeds by constructing explicit contact $(\pm1)$–surgery diagrams representing candidate cosmetic surgeries and comparing their $d_3$–invariants. The $L$-space hypothesis allows us to control correction terms and eliminate potential coincidences except in the narrow family described above.

As in the $S^3$ setting, when $r \neq 1/n$ there may be multiple contact $(r)$–surgeries on a fixed Legendrian knot. This leads to two refinements of cosmetic behavior. We say two contact $(r)$–surgeries are \textit{weakly cosmetic} if the resulting contact manifolds are contactomorphic, and \textit{strongly cosmetic} if the resulting contact structures are isotopic. Since the smooth surgery coefficient is fixed, the underlying smooth manifold is canonically identified in this situation.

Rather than formulating a precise conjecture in this direction, we ask:

\begin{question}
Which Legendrian knots in tight contact integer homology sphere $L$-spaces  admit weakly-cosmetic or strongly-cosmetic surgeries?
\end{question}
\begin{remark}
    We further analyze contact surgeries on Legendrian unknots in $L$-space homology spheres and observe a striking dichotomy: certain Legendrian unknots admit contact surgeries with no cosmetic counterparts, while in other cases a given contact surgery is contactomorphic to infinitely many distinct surgeries on the same knot.
\end{remark}
We establish several structural restrictions analogous to the $S^3$ case. In particular, Floer-theoretic constraints coming from the $L$-space condition severely restrict the possibility of strongly-cosmetic surgeries for non-trivial knot types, especially for $L$-space knots realizing the maximal Thurston–Bennequin bound $\tb(L)=2\tau_Y(L)-1$. Ni–Wu showed that knots in $S^3$ admitting truly cosmetic surgeries must have vanishing $\tau$-invariant \cite{NiWu}. We extend Ni–Wu’s result to integer homology sphere $L$-spaces .

\begin{theorem}
\label{tauresult}
Let $Y$ be an  $L$-space  integer homology sphere and $K\subset Y$ a knot.
If $K$ admits truly cosmetic surgeries in $Y$—that is, there exist distinct slopes
$r_1\neq r_2$ with $Y_{r_1}(K)\cong Y_{r_2}(K)$ as oriented $3$-manifolds—then the
Ozsváth–Szabó knot Floer invariant $\tau_Y(K)=0$. 
\end{theorem}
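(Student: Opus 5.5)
Our plan is to follow Ni--Wu's argument for $S^3$, replacing the $S^3$ mapping cone formula with its analogue for knots in an $L$-space integer homology sphere $Y$. Since $Y$ is an $L$-space homology sphere, $\widehat{HF}(Y)\cong\F$ and $HF^+(Y)\cong\mathcal{T}^+_{(d(Y))}$. Ozsv\'ath--Szab\'o's rational surgery formula for a null-homologous knot therefore applies verbatim, with every grading shifted by $d(Y)$. In particular, for $p/q>0$ and $0\le i\le p-1$ we have
\[
d(Y_{p/q}(K),i)=d(Y)+d(L(p,q),i)-2\max\{V_{\lfloor i/q\rfloor},H_{\lfloor (i-p)/q\rfloor}\},
\]
where $V_k,H_k$ are the local $h$-invariants of $CFK^\infty(Y,K)$. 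These satisfy $V_k\ge V_{k+1}\ge V_k-1$ and $H_{-k}=V_k$. Likewise, $HF^+(Y_{p/q}(K))$ is computed from the mapping cone of $A^+_k\to B^+$. The first step is to record these facts, with $d(Y)$ simply carried along. The conjugation symmetry of $CFK^\infty$ holds in any homology sphere, so $H_{-k}=V_k$ still holds.

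Next we reduce the possible slopes, as Ni--Wu do. Suppose $Y_{r_1}(K)\cong Y_{r_2}(K)$ preserving orientation, with $r_1\neq r_2$. Comparing $|H_1|$ gives $r_i=p/q_i$ with the same $p$. Comparing Casson--Walker invariants, which uses the surgery formula in terms of $\Delta_K''(1)$ and the linking form, rules out the case where $r_1,r_2$ have the same sign unless the knot is trivial in the relevant sense. So we may assume $r_1=p/q$ and $r_2=-p/q'$ with $p,q,q'>0$.

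The core step is to show that $\nu^+(K):=\min\{k\ge 0: V_k=0\}$ must vanish. Assume $\nu^+(K)>0$, so $V_0>0$. For positive surgery the correction terms satisfy $d(Y_{p/q}(K),i)\le d(Y)+d(L(p,q),i)$, with strict inequality for the spin$^c$ structures where $V>0$. For negative surgery, apply the same formula to the mirror $\overline{K}\subset -Y$, which is again an $L$-space homology sphere, with $d(-Y)=-d(Y)$. This gives $d(Y_{-p/q'}(K),i)\ge d(Y)-d(L(p,q'),i)$. An orientation-preserving homeomorphism induces a bijection on spin$^c$ structures that respects conjugation and the linking form, and it preserves $d$. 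Summing the $d$-invariants over all spin$^c$ structures, we use the lens space identity $\sum_i d(L(p,q),i)$ in terms of Dedekind sums together with the Casson--Walker constraint to force $\sum_i V_{\lfloor i/q\rfloor}=0$. This contradicts $V_0>0$, so $\nu^+(K)=0$, and symmetrically $\nu^+(\overline{K})=0$.

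Finally, we pass from $V_0$ to $\tau$. We have $\nu^+(K)\ge\max(\tau_Y(K),0)$, since $V_k>0$ for $k<\tau_Y(K)$; this follows from the filtration argument, where $\widehat{HF}(Y)=\F$ is generated in filtration level $\tau_Y$. Hence $\tau_Y(K)\le 0$. Running the same argument on the mirror gives $\tau_Y(\overline K)=-\tau_Y(K)\le 0$, so $\tau_Y(K)=0$. We expect the main obstacle to be the third step: Ni--Wu's $S^3$ argument uses both the $d$-invariant inequalities and the precise form of $HF^+$ via the mapping cone to rule out the mixed-sign case. We must check that the $d(Y)$ shifts cancel. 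They do, because the homeomorphism fixes $Y_{r}$ and both sides carry the same $d(Y)$; the only effect of mirroring is $d(-Y)=-d(Y)$, which is compatible with $d(-M)=-d(M)$.
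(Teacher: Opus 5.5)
Your overall architecture matches the paper's: the $d(Y)$-shifted correction-term formula, a two-sided squeeze on $\sum_{\mathfrak s} d(Z,\mathfrak s)$, and then $V_0(K)=0$ and $V_0(\overline K)=0$, giving $\tau_Y(K)\le 0$ and $\tau_{-Y}(\overline K)\le 0$. However, there are two genuine gaps, and both come from attributing to the Casson--Walker invariant inputs it cannot supply.

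\textbf{Same-sign pairs.} Casson--Walker does not exclude these. For $p=1$ there is no linking form, and
\[
\lambda(Y_{1/q_1}(K))=\lambda(Y)+\tfrac{q_1}{2}\Delta_K''(1)=\lambda(Y_{1/q_2}(K))
\]
for every knot with $\Delta_K''(1)=0$. Ruling out same-sign cosmetic pairs is a genuinely Heegaard Floer--theoretic result: Ozsv\'ath--Szab\'o in $S^3$, and Wu for $L$-space homology spheres (Theorem~\ref{thm:Wu}). The paper cites it, and you must too. Your closing remark has this backwards: the Floer input kills the same-sign case, while the mixed-sign case survives and is where $\tau=0$ is extracted.

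\textbf{The squeeze needs $\Delta_K''(1)=0$.} Sum your two bounds over all $\mathrm{Spin}^c$ structures, so the bijection is irrelevant, and use $\sum_i d(L(p,q),i)=-2p\,\lambda(L(p,q))$. This gives exactly
\[
\sum_i \max\{V,H\}+\sum_i\max\{\overline V,\overline H\}=-p\bigl(\lambda(L(p,q))+\lambda(L(p,q'))\bigr)=\tfrac{q+q'}{2}\,\Delta_K''(1),
\]
where the last equality is the Casson--Walker comparison. So you obtain $V_0=0$ only once $\Delta_K''(1)=0$ is known, and for $p>1$ Casson--Walker plus the linking form do not give it. For example, take $p=13$, $q=1$, $q'=4$:
\begin{itemize}
\item the linking forms agree, since $-4\cdot 4^2\equiv 1 \pmod{13}$;
\item $s(1,13)=11/13$ and $s(4,13)=-1/13$;
\item the Casson--Walker relation, with $\lambda(L(p,q))=-\tfrac12 s(q,p)$, holds with $\Delta_K''(1)=2$.
\end{itemize}
The missing ingredient is the Casson--Gordon comparison of Boyer--Lines, used in Proposition~\ref{cosmeticconstraint}(1). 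Since both slopes share the same $p$, the term $\sigma(K,p)$ cancels. This forces $\sigma^{\mathrm{CG}}(L(p,q))=\sigma^{\mathrm{CG}}(L(p,-q'))$, hence equal lens-space Casson--Walker invariants, hence $\Delta_K''(1)=0$.

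With these two inputs added, your argument goes through. Your final step is in fact the right one. You need equality on the mirror side of the squeeze to get $V_0(\overline K)=0$, because $V_0(K)=0$ alone only gives $\tau_Y(K)\le 0$: the left-handed trefoil has $V_0=0$ and $\tau=-1$. So the mirror statement cannot be deduced from $V_0(K)=0$ by duality, as the duality step in Proposition~\ref{prop:V0H0tau} attempts.
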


When $Y$ is an integer homology $L$-space equipped with a contact structure $\xi$  whose contact invariant is nontrivial, the smooth $\tau$-invariant $\tau_Y(K)$ agrees with the contact $\tau$-invariant $\tau_\xi(K)$ defined by Hedden \cite{HEDDENcontacttau}. Using the lower bound on $\tau_\xi(K)$, we then obtain the desired constraint on the maximal Thurston-Bennequin number.

Hanselman \cite{Han} obtained further restrictions on knots in $S^3$ admitting truly cosmetic surgery using immersed curve techniques. We extend Hanselman's result to knots in integer homology $L$-spaces. This  gives additional restrictions which can be summarized in the following theorem.

\begin{theorem}\label{adaptedHanselman}
Let $p, q, q' > 0$ with $\gcd(p,q)=\gcd(p,q')=1$. If $Y_{p/q}(K) \cong Y_{-p/q'}(K)$, then:
\begin{enumerate}

\item[\rm (a)] $q = q'$, so the slopes are $\{p/q,\, -p/q\}$;
\item[\rm (b)] $q^2 \equiv -1 \pmod{p}$.
\item[\rm (c)] The slopes either $\pm 2$ or $\pm \frac{1}{ q}$
for some $q \in \Z_{> 0}$
\qquad \text{in the second case};
\item[\rm (d)] if the slopes are $\pm 2$, then $g(K) = 2$.

\end{enumerate}
\end{theorem}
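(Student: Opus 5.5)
Our plan is to transport Hanselman's immersed-curve argument from $S^3$ to $Y$, using that $Y$ is an $L$-space integer homology sphere. The key input is that for such $Y$ the knot Floer complex $CFK^\infty(Y,K)$ has the same formal structure as a complex for a knot in $S^3$. It is a $\Z\oplus\Z$-filtered complex with $H_*(\widehat{CF}(Y))\cong\F$ supported in degree $d(Y)$. Hence the bordered invariant $\widehat{CFD}$ of the knot complement $Y\setminus\nu(K)$ is again described by an immersed multicurve $\widehat{HF}(M)$ in the punctured torus $\partial M\setminus z$. This curve has a distinguished essential component $\gamma_0$ (encoding $\tau_Y(K)$ and $\epsilon$) and finitely many further components, exactly as in Hanselman's setting. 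All grading statements hold after the global shift by $d(Y)$. In particular, the surgery formula
\[ \widehat{HF}(Y_{p/q}(K)) \cong HF(\widehat{HF}(M), \ell_{p/q}) \]
(pairing with a line of slope $p/q$) holds verbatim by the pairing theorem. The $d$-invariants $d(Y_{p/q}(K),\mathfrak{s})$ are likewise read off from $\gamma_0$ plus the shift coming from $d(Y)$. Theorem \ref{tauresult} gives $\tau_Y(K)=0$, so, as in Hanselman, $\gamma_0$ is horizontal, i.e.\ $\epsilon=0$ in the relevant sense.

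We then follow Hanselman's steps in order. First, orientation reversal: $Y_{p/q}(K)\cong Y_{-p/q'}(K)$, and $|H_1|=p$ forces equal numerators. Next we compare total ranks $\dim\widehat{HF}$. For each curve component the intersection count with $\ell_{p/q}$ grows like $p\cdot(\text{vertical extent})+q\cdot(\text{horizontal winding})$. Equality of ranks for the slopes $p/q$ and $-p/q'$ then forces $q=q'$ once $K$ is nontrivial, giving (a). For (b), we match the Casson–Walker invariant, or equivalently the sum of $d$-invariants, together with the linking form of $Y_{p/q}(K)$. This reproduces the $S^3$ computation, since $Y$ contributes only an integer homology sphere summand and a constant shift that cancels on both sides. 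The orientation-preserving identification of linking forms $-q/p$ and $q/p$ yields $q^2\equiv-1\pmod p$.

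For (c) and (d) we use Hanselman's finer invariant, the graded count of intersections of $\gamma_0$ and the other components with lines of slopes $\pm p/q$ measured via the ``thickness'' $th(K)$ and genus $g(K)$ (visible as the height of the curve). Hanselman's inequality $p \le$ a quantity controlled by $th(K)$ and $g(K)$, with the only survivors $p/q=\pm2$ or $\pm1/q$, transfers directly: it uses only the curve's shape and the symmetry of the Maslov grading. For the same reason, the $\pm2$ case forces $g(K)=2$, via the rank computation $\dim\widehat{HF}(Y_{\pm2})$ together with $d$-invariant matching in the two spin$^c$ structures.

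The main obstacle is the first step: verifying that the immersed curve package (existence, the role of $\gamma_0$, the grading conventions, and the $d$-invariant formulas) is valid for knots in an $L$-space homology sphere rather than $S^3$. We plan to argue that Hanselman–Rasmussen–Watson only use that the complement has torus boundary with $\widehat{CFD}$ extendable, and that the ambient filling has $\widehat{HF}\cong\F$. Any $L$-space ZHS satisfies this, with gradings shifted by $d(Y)$. Once this step is in place, the remaining steps are formal adaptations.
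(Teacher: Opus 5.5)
Your overall route is the paper's: transport Hanselman's immersed-curve argument to $Y$ and feed in the Ni--Wu input. However, three of the specific steps you rely on fail as stated.

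\textbf{Horizontality of $\gamma_0$, and hence (a).} $\tau_Y(K)=0$ does not make $\gamma_0$ horizontal. Horizontality amounts to an isolated generator, $\CFKINFTY(Y,K)\simeq\CFKINFTY(O)\oplus A$ with $A$ acyclic (Hom's $\epsilon=0$), and $\tau$ is strictly weaker. For knots with $\tau=0$ but $\epsilon\neq0$, the curve $\gamma_0$ meets the axis at height $0$ and then turns, so its nonvertical segment has slope $m\neq0$. What you need is the full strength of the Ni--Wu argument: the equality $d(Y_{p/q}(K),i)=d(Y)+d(L(p,q),i)$ for every $i$, for $K$ and, via the reversed description, for its mirror. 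This gives $V_0(K)=V_0(m(K))=0$ and hence the isolated generator. That is how the paper proceeds (Proposition~\ref{cosmeticconstraint}, Corollary~\ref{cor:UB}, Lemma~\ref{lem:isolated-gen}). Without it your proof of (a) breaks: the ranks are $|p-mq|+nq$ and $|p+mq'|+nq'$, and these can agree with $q\neq q'$. Also, you have the roles reversed: $\gamma_0$ contributes $p$, and each vertical segment contributes $q$.

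\textbf{Part (b).} The linking form cannot give (b). An isometry between the forms $\pm q/p$ on $\Z/p$ exists iff $-1$ is a square mod $p$, independently of $q$; for example $p=5$, $q=1$ passes this test. The congruence $q^2\equiv-1\pmod p$ is equivalent to the vanishing of the Dedekind sum $s(q,p)$, i.e.\ $\lambda(L(p,q))=0$. Comparing Casson--Walker invariants only yields $2\lambda(L(p,q))=-\tfrac{q}{p}\Delta_K''(1)$ unless you first prove $\Delta_K''(1)=0$. That is the Casson--Gordon/Boyer--Lines step, Proposition~\ref{cosmeticconstraint}(1).

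\textbf{Parts (c) and (d).} Your account points to the wrong quantities. Total rank and $d$-invariants carry no information at the slopes $\pm2$. Both ranks equal $2+n$, and once $V_0(K)=V_0(m(K))=0$ the $d$-invariants of $Y_{\pm2}(K)$ are $\{d(Y)+\tfrac14,\,d(Y)-\tfrac14\}$ on both sides. So they cannot force $g(K)=2$. Hanselman's thickness inequality bounds $q$ (not $p$) in the $\pm1/q$ case and plays no role in restricting the slopes.

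The missing idea is the comparison of relative Maslov gradings under the bijection $\phi$ of intersection points. A generator $x$ on a vertical segment of height $A(x)$ shifts by $\Drel(x)=1-2|A(x)|-4k(x)$ (Proposition~\ref{prop:grading-shift}), and these shifts must sum to zero. This has three consequences:
\begin{enumerate}
\item[(i)] It rules out genus one, since every shift is then $+1$.
\item[(ii)] It gives $n_0\ge 2n_1+6n_2+\cdots$, with a stronger averaged version for $p/q<1$.
\item[(iii)] Combined with the fact that $\phi$ must preserve the spin$^c$ structures singled out by rank, whose lens-space $d$-invariant sum is nonzero when $q^2\equiv-1\pmod p$, it eliminates $n_s\neq0$ for $s\ge2$ and all slopes other than $\pm2$ and $\pm1/q$.
\end{enumerate}
Then $g(K)=2$ and $n_0=2n_1$ follow from the same sum.
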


Finally, as in the smooth category, it is necessary to treat Legendrian unknots separately. While smooth surgeries on an unknot in an integer homology sphere are highly constrained, contact surgeries exhibit a mixture of rigidity and flexibility. In certain cases, a Legendrian unknot admits contact surgeries that are unique up to contactomorphism, while in many other cases, a given contact surgery is contactomorphic to infinitely many distinct contact surgeries on the same unknot. This phenomenon may be viewed as a contact analogue of Rolfsen twists, though the $L$-space condition introduces additional constraints through the classification of tight contact structures on solid tori and lens spaces.

The proofs combine convex surface theory, detailed analysis of contact surgery diagrams, properties of the Farey graph, and Floer-theoretic obstructions arising from correction terms and contact invariants.
\begin{remark}
    We extend our proof to integer homology $L$-spaces by considering simplified contact surgery diagrams of the knot that we obtain after doing handle-slides to make $d_3$ computations feasible.
\end{remark}

\begin{comment}
To begin with we would like to study property P for anull homologous knots in general 3-manifold.
A knot K in S³ has Property P if every 3-manifold obtained by performing non-trivial Dehn surgery along K is not $S^3$. The conjecture states that all knots, except the unknot, have Property P. 
\textcolor{blue}{we need to generalise property P}\\
A proposed generalisation of property P conjecture: For a compact connected oriented 3-manifold $M$ no non-trivial surgery on a non-trivial knot gives a manifold homeomorphic to $M$.\ 
idea of proof: use filtered chain complex of HF. Only need to check for 1/q surgery (homology constraints).
Rational surgery formula. Rank inequality

Property P and R are true for null homologous knots in QHS3. In our case for all knots.

We propose the cosmetic property P and R conjecture for Legendrian knots in $M,\xi_{tight}$
There are multiple generalizations of cosmetic surgery question 
\begin{conjecture}
    
\end{conjecture}
QHS^3 null homologous knots. $c\neq 0$

\end{comment}
	\subsection*{Acknowledgements}
We are grateful to John Etnyre for numerous helpful discussions.	
\section{Preliminaries}

\subsection{Contact Surgery}

We briefly recall the construction of contact surgery,
standard references include \cite{DingGeiges04,Geiges08,Honda00}.

Let $(Y,\xi)$ be a co-oriented contact $3$–manifold and
let $L\subset (Y,\xi)$ be a Legendrian knot.
Denote by $\tb(L)$ and $\rot(L)$ the
Thurston–Bennequin and rotation invariants of $L$.

For $r\in\mathbb{Q}$, contact $r$–surgery along $L$
is defined as follows.
Let $\nu(L)$ be a standard tubular neighborhood of $L$
with convex boundary and dividing slope equal to the contact framing.
Remove $\nu(L)$ and reglue a solid torus so that the meridian
is identified with a curve of slope $r$ measured relative to
the contact framing.
The resulting manifold is denoted $Y_L(r)$ and carries a
naturally induced contact structure $\xi_L(r)$.

Topologically, contact $r$–surgery corresponds to smooth
Dehn surgery with coefficient
$r+\tb(L)$
with respect to the Seifert framing
(see \cite{DingGeiges04}).

Of particular importance are the cases $r=\pm1$.
Contact $(-1)$–surgery coincides with Weinstein
$2$–handle attachment and therefore produces a Stein
cobordism \cite{Gompf98}.
Contact $(+1)$–surgery does not in general admit a Stein
interpretation but is well-defined as a contact operation
\cite{DingGeiges04}.

More generally, for $r\in\mathbb{Q}$,
contact $r$–surgery may be realized as a sequence of
contact $(\pm1)$–surgeries along Legendrian push–offs of $L$,
determined by a continued fraction expansion of $r$
\cite{DingGeiges04}.
This description allows one to compute invariants,
such as the $d_3$–invariant, directly from the
associated surgery diagram.
\subsection{$d_3$–Invariant}

Let $(Y,\xi)$ be a closed, co-oriented contact $3$–manifold
such that $c_1(\xi)$ is torsion. The $d_3$–invariant is a
homotopy invariant of the oriented $2$–plane field underlying $\xi$
(see \cite{Gompf98}).

Let $(X,J)$ be a compact almost complex $4$–manifold with
$\partial X = Y$ such that the complex tangencies induced by $J$
coincide with $\xi$ along the boundary. Following
\cite{Gompf98}, we use the normalized formula
\[
d_3(\xi)
=
\frac{1}{4}
\left(
c_1^2(X,J)
-
3\sigma(X)
-
2(\chi(X)-1)
\right),
\]
where $\sigma(X)$ and $\chi(X)$ denote the signature and Euler
characteristic of $X$, respectively.
The normalization ensures that
$d_3(\xi_{\mathrm{std}})=0$ for the standard contact structure
on $S^3$.

Suppose now that $(Y,\xi)$ is obtained from
$(S^3,\xi_{\mathrm{std}})$ by contact surgery along a Legendrian
link $L=L_1\cup \dots \cup L_k$.
Let $X$ be the $4$–manifold obtained by attaching $2$–handles
according to the surgery diagram, and let $Q$ denote the linking
matrix of the corresponding smooth surgery description.
Define
\[
c = (\mathrm{rot}(L_1),\dots,\mathrm{rot}(L_k))^T.
\]
Then, as shown in \cite{Gompf98} (see also \cite{DG}),
\[
d_3(\xi)
=
\frac{1}{4}
\left(
c^T Q^{-1} c
-
3\sigma(Q)
-
2k
\right)
+
q,
\]
where $k$ is the number of components of the surgery link,
$\sigma(Q)$ is the signature of $Q$, and
$q$ denotes the number of contact $(+1)$–surgeries
in the diagram.

In particular, if the diagram consists only of contact
$(-1)$–surgeries (i.e.\ Legendrian surgeries),
then $q=0$ and
\[
d_3(\xi)
=
\frac{1}{4}
\left(
c^T Q^{-1} c
-
3\sigma(Q)
-
2k
\right).
\]

\subsection{Invariants of rational homology spheres}

\subsubsection*{Casson-Walker invariant}
The Casson invariant of an integral homology sphere $Y$ is obtained by studying $SU(2)$-representations of the fundamental group of $Y$. The Casson invariant was extended to rational homology spheres by Walker \cite{Walker}. The Casson-Walker invariant admits a purely combinatorial definition in terms of surgery representations \cite{Lescop}.  If $K \subset Y$ is a knot in an integer homology sphere and $p/q \neq 0$, then the Casson-Walker invariant satisfies the following surgery formula
\begin{equation*}
\lambda(Y_{p/q}(K)) \;=\; \lambda(Y) + \lambda(L(p,q))
+ \frac{q}{2p}\,\Delta_K''(1),
\end{equation*}
where $\Delta_K(t)$ is the symmetrized Alexander polynomial of $K$.

\subsubsection*{Casson-Gordon invariant}

For a rational homology sphere $Z$,
the Casson-Gordon invariant $\sigma^{\mathrm{CG}}(Z)
$ provides another obstruction to bounding the rational homology ball.  When $Z$ is obtained by $p/q$-surgery on a knot $K$ in an integer homology sphere $Y$, we have
\begin{equation}
\sigma^{\mathrm{CG}}(Z) \;=\;
\sigma^{\mathrm{CG}}(L(p,q)) \;-\; \sigma(K,p),
\end{equation}
where $\sigma(K,p)$ is a certain signature invariant of $K$ associated to the $p$-fold cyclic cover \cite{CassonGordon}.
For lens spaces, $\sigma^{\mathrm{CG}}(L(p,q))$ is determined by the Dedekind sum $s(q,p)$, and the equality
$\sigma^{\mathrm{CG}}(L(p,q_1)) = \sigma^{\mathrm{CG}}(L(p,q_2))$ implies
$\lambda(L(p,q_1)) = \lambda(L(p,q_2))$, since both invariants are
expressed in terms of $s(q,p)$.

\subsection{Heegaard Floer preliminaries}\label{hfprelim}

We review the necessary background from Heegaard Floer homology introduced by Ozsv\'ath and Szab\'o \cite{OSzAnnals,OSzProperties}. 

\subsubsection*{Heegaard Floer three-manifold invariants}

Let $Y$ be a closed, oriented rational homology three-sphere.
Ozsv\'ath and Szab\'o  associated to each $\mathfrak{s} \in {\Spin}^c(Y)$ a collection of $\F[U]$-modules
\[
\HF(Y,\mathfrak{s}),\quad
HF^+(Y,\mathfrak{s}),\quad
HF^-(Y,\mathfrak{s}),\quad
HF^\infty(Y,\mathfrak{s}),
\]
fitting into a long exact sequence
\[
\cdots \longrightarrow
\HF^-(Y,\mathfrak{s}) \longrightarrow
\HF^\infty(Y,\mathfrak{s}) \longrightarrow
\HF^+(Y,\mathfrak{s}) \longrightarrow \cdots\,.
\]
Each of these modules can  be given an absolute $\mathbb{Q}$-grading such that multiplication by $U$ decreases the degree by $2$ \cite{OSzAbsgraded}.

The group $HF^+(Y,\mathfrak{s})$ decomposes as
\[
\HF^+(Y,\mathfrak{s}) \;\cong\; \mathcal{T}^+_{d(Y,\mathfrak{s})}
\;\oplus\; \HF_{\mathrm{red}}(Y,\mathfrak{s}),
\]
where
$\mathcal{T}^+ = \mathbb{F}[U,U^{-1}]/U\cdot\mathbb{F}[U]$ is a $\F[U]$ module commonly known as the tower and $\HF_{\mathrm{red}}(Y,\mathfrak{s})= \HF^+(Y,\mathfrak{s})/\HF^\infty(Y,\mathfrak{s})$ is called the reduced Heegaard Floer homology. The \emph{Heegaard Floer correction term} (or \emph{$\din$-invariant}) of $(Y,\mathfrak{s})$ is defined as
\[
\din(Y,\mathfrak{s})
\;=\;
\min\!\bigl\{\, \mathrm{gr}(x) \;\big|\;
x \in \mathrm{Im}\bigl(\pi \colon HF^\infty(Y,\mathfrak{s}) \to HF^+(Y,\mathfrak{s})\bigr),\;
x \neq 0 \,\bigr\},
\]
where $\mathrm{gr}$ is the absolute $\mathbb{Q}$-grading. Hence, the $\din$-invariant is the minimal absolute grading of the elements belonging in a tower. 

The $\din$-invariant defines a homomorphism from rational homology cobordism group to $\Z$. For a rational homology sphere $Y$, the Casson-Walker invariant $\lambda(Y)$ and the Heegaard Floer $\din$-invariants are related by the formula \cite{OSzAbsgraded}:
\begin{equation*}
|H_1(Y;\mathbb{Z})|\,\lambda(Y) \;=\;
\sum_{\mathfrak{s} \in {\Spin}^c(Y)}
\!\left(\chi\!\left(\HF_{\mathrm{red}}(Y,\mathfrak{s})\right)
\;-\; \frac{1}{2}\,\din(Y,\mathfrak{s})\right).
\end{equation*}

A rational homology sphere $Y$ is called a \emph{Heegaard Floer $L$-space} if $\HFhat(Y,\mathfrak{s}) \cong \F$
for every $\mathfrak{s} \in {\Spin}^c(Y)$, or equivalently if
$HF_{\mathrm{red}}(Y,\mathfrak{s}) = 0$ for all $\mathfrak{s}$. When $Y$ is an integer homology sphere $L$-space, the reduced Floer homology
vanishes and there is a unique ${\Spin}^c$ structure, so the relation between the $\din$ invariants and the Casson-Walker invariant reduces to
\begin{equation*}
\lambda(Y) \;=\; -\frac{1}{2}\,\din(Y).
\end{equation*}

\subsubsection*{The knot Floer chain complex}

A knot  $K \subset Y$ gives rise to a $\Z \oplus\Z$ filtration on  the Heegaard Floer complex $CF^\infty(Y,\mathfrak{s})$. The filtered complex is more conveniently written as $C=\CFKINFTY(Y,K)$. The filtration $\mathcal{F}:S \to \Z \oplus \Z$ is a function on set of generators $S$ of the Heegaard Floer complex with the property that, if $\mathcal{F}(\mathbf{x})=(i, j)$, then $\mathcal{F}(U \cdot \mathbf{x})=(i-1, j-1)$ and $\mathcal{F}(\mathbf{y}) \leq \mathcal{F}(\mathbf{x})$ for all $\mathbf{y}$ having nonzero coefficient in $\partial \mathbf{x}$. Given $I,J \subseteq \Z$, $C\{i \in I, j \in J \}$ will denote the subgroup of $C$ generated by elements $\mathbf{x} \in S$  satisfying $\mathcal{F} (\mathbf{x}) 
\in I \times J$.\\

Let $\mathcal{F}_Y(K, m):=C\{i=0, j \leq m\} \subset {\CF}(Y)$. It follows from the property of filtration that it is a subcomplex of $\CF(Y)$. We obtain a sequence of maps

\[
\iota_K^m: \mathcal{F}_Y(K, m) \longrightarrow \widehat{C F}(Y),
\]

which induce isomorphisms in homology for all sufficiently large integers $m$. The smooth $\tau$-invariant \cite{OSztau}, $\tau_Y(K)$ is defined as follows
\[ \tau_Y(K):=\min \{m \in \mathbb{Z} \mid 
\left.\iota_K^m: \mathcal{F}_Y(K, m) \rightarrow \widehat{C F}(Y) \text { induces a nontrivial map in homology }\right\}.\]

Let  $m(K) \subset -Y$  be the mirror image of the knot  $K \subset Y$ which is obtained by reversing the orientation of the ambient manifold $Y$. We have the following identity relating their $\tau$ invariants \begin{equation*}
\tau_Y(K)=-\tau_{-Y}(m(K)) .
\end{equation*}

\subsubsection*{Rational surgery formula}  Now, we will describe the rational surgery formula for knots in an integer homology sphere $L$-space $Y$. Following \cite{OSzRationalSurgery}, for each integer $s$, define
\[
A_s^+ = C\{\, i \ge 0 \text{ or } j \ge s \,\}, \qquad B^+ = C\{\, i \ge 0 \,\}.
\]
Then $A_s^+$ is a natural quotient complex of $C$, and $B^+$ may be viewed as a subquotient complex. There are two canonical chain maps $v_s, h_s \colon A_s^+ \to B^+$. The map $v_s$ is projection onto $C\{\, i \ge 0 \}$. The map $h_s$ is projection onto $C\{\, j \ge s \}$, followed by identification with $C\{\, j \ge 0 \}$, followed by chain homotopy equivalence from $C\{\, j \ge 0 \}$ to $C\{\, i \ge 0 \}$.\\

Given $p/q \in \Q \setminus \{0\}$, consider the direct sums
\[
\mathbb{A}_i^+ \;=\; \bigoplus_{s \in \Z}
\!\left(s,\, A^+_{\lfloor (i+ps)/q \rfloor}\right), \qquad
\mathbb{B}_i^+ \;=\; \bigoplus_{s \in \Z}
\!\left(s,\, B^+\right),
\]
and define $D^+_{i,p/q} \colon \mathbb{A}_i^+ \to \mathbb{B}_i^+$ by
$D^+_{i,p/q}\{(s,a_s)\} = \{(s,b_s)\}$ with
\begin{equation*}
b_s \;=\; v^+_{\lfloor (i+ps)/q \rfloor}(a_s)
\;+\; h^+_{\lfloor (i+p(s-1))/q \rfloor}(a_{s-1}).
\end{equation*}

\begin{theorem}[Ozsv\'ath-Szab\'o
{\cite[Theorem~9.19]{OSzRationalSurgery}}]\label{thm:rationalsurgeryformula}
Let $\mathbb{X}^+_{i,p/q}$ be the mapping cone of $D^+_{i,p/q}$.
There is a $U$-equivariant isomorphism
$H_*(\mathbb{X}^+_{i,p/q}) \cong \HF^+(Y_{p/q}(K),\, i)$.
\end{theorem}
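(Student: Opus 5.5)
The "statement immediately above" is Theorem~\ref{thm:rationalsurgeryformula}, the Ozsv\'ath--Szab\'o rational surgery formula. It is quoted from \cite[Theorem~9.19]{OSzRationalSurgery} and needs no new proof here; what has to be justified is that it applies verbatim to an arbitrary knot $K$ in an integer homology sphere $Y$, and in particular in our $L$-space setting. I would follow the original strategy and reduce rational surgery to integer surgery.

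\textbf{Step 1 (reduction to integer surgery).} Write $p/q$ surgery on $K\subset Y$ as integer surgery on a knot in another integer homology sphere. Take the connected sum $Y' = Y\# L(q,r)$ with a core $O_q$ of the lens space, choosing $r$ so that $p$-surgery on $K'=K\# O_q$ yields $Y_{p/q}(K)$. The knot Floer complex of $K'$ is the tensor product $\CFKINFTY(Y,K)\otimes \CFKINFTY(L(q,r),O_q)$. The second factor is a simple staircase-free complex, with one generator in each of the $q$ $\Spin^c$ structures and Alexander gradings shifted by $\lfloor\cdot/q\rfloor$.

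\textbf{Step 2 (integer surgery formula).} Apply the integer surgery mapping cone of Ozsv\'ath--Szab\'o to $K'\subset Y'$. This requires an exact triangle relating $Y'$, $Y'_{p}(K')$ and $Y'_{p+m}(K')$ for large $m$, together with the identification of large-surgery groups with $A^+_s$ through the large surgery theorem. Both hold for null-homologous knots in any closed $3$-manifold, and in particular here.

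\textbf{Step 3 (unwinding the tensor product).} The main bookkeeping is to reorganise the resulting cone, $\Spin^c$ structure by $\Spin^c$ structure. The $A^+_s(K')$ become copies of $A^+_{\lfloor (i+ps)/q\rfloor}(K)$, and the maps $v,h$ become the stated $v^+_{\lfloor (i+ps)/q\rfloor}$ and $h^+_{\lfloor (i+p(s-1))/q\rfloor}$. This yields $\mathbb{X}^+_{i,p/q}$ together with $U$-equivariance.

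\textbf{Main obstacle.} The hardest part is the truncation and quasi-isomorphism argument. One must show that the infinite mapping cone can be truncated to a finite one: this uses that $v_s$ is an isomorphism for $s\gg0$ and $h_s$ is an isomorphism for $s\ll0$. One must then show the truncated cone computes $\HF^+$, via the exact triangle and a filtration argument on the surgery cobordism maps.

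Since all of this is established in \cite{OSzRationalSurgery} for knots in integer homology spheres, I would simply cite it. When $Y$ is an $L$-space, $B^+\cong\mathcal{T}^+$, which is how the formula will be used later.
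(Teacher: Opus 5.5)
The paper does not prove this statement; it quotes it from \cite[Theorem~9.19]{OSzRationalSurgery}. Your final move of simply citing it therefore matches the paper. The sketch you give as justification, however, contains a concrete error, and Step~2 would fail if it were meant as an argument.

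The error is in Steps~1--2. The manifold $Y'=Y\# L(q,r)$ is not an integer homology sphere, since $H_1(Y')\cong\Z/q$. The knot $K'=K\# O_q$ is not null-homologous either: the core $O_q$ generates $H_1(L(q,r))$, so $K'$ is only rationally null-homologous. Consequently:
\begin{itemize}
\item $K'$ has no Seifert framing, so ``$p$-surgery on $K'$'' is not defined as written. The correct picture comes from the slam-dunk: write $p/q=a-r/q$ with $a\in\Z$ and $r\equiv -p \pmod q$. Then do $a$-surgery on $K$ together with $q/r$-surgery on a meridian of $K$. The resulting framing on $K'$ is an integral (single $2$-handle) framing of a rationally null-homologous knot.
\item The integer-surgery mapping cone for null-homologous knots, which you invoke in Step~2, does not apply to $K'$.
\end{itemize}

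The missing ingredient is the main technical result of \cite{OSzRationalSurgery}: a mapping cone formula for integrally framed surgeries on rationally null-homologous knots. In that formula the complexes $A^+_\xi$ and the maps $v_\xi,h_\xi$ are indexed by relative $\Spin^c$ structures on $(Y',K')$. The floor functions $\lfloor (i+ps)/q\rfloor$ in $D^+_{i,p/q}$ arise when these relative $\Spin^c$ structures on $K\# O_q$ are identified with Alexander gradings of $K$ via the K\"unneth decomposition. Your Step~3 bookkeeping and the truncation argument are in the right spirit, but they rest on this generalized cone, not on the null-homologous one.

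Either cite the theorem cleanly, as the paper does, or correct Steps~1--2 along these lines.
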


\subsubsection*{Slice Bennequin inequality and Heegaard Floer contact invariant}

For a Legendrian representative $L$ of $K$ in $S^3$ with standard contact structure, Plamenevskaya\cite{Plamenevskaya2004TB} proved the following lower bound on the Heegaard Floer $\tau$–invariant:
\begin{theorem}[\cite{Plamenevskaya2004TB}]\label{taubound}  
 \[
\tb(L)+|\rot(L)| \le 2\tau(K)-1.
\]  
\end{theorem}

Hedden generalized the definition of the $\tau$-invariant for a null-homologous knot $K$ in a rational homology sphere $Y$, obtaining a family of $\tau$-invariants $\{\tau_{\alpha}(Y,K)\}$, where $\alpha \in \widehat{HF}(Y)$. In particular, we will be interested in the contact $\tau$ invariant $\tau_{\xi}(K)$. It is defined as follows:

\begin{definition}
    If $Y$ is a rational homology sphere and $\xi$ is a contact structure, we define the \textbf{contact} $\tau$ invariant $\tau_{\xi}(K):= -\tau_{c(\xi)}(-Y,K)  $ where $c(\xi)$ is the Heegaard Floer contact invariant. 
\end{definition}

He also proved the following generalization of Plamenevskaya's result:

\begin{theorem}[\cite{HEDDENcontacttau}]
Let $(Y,\xi)$ be a contact three-manifold with nontrivial Ozsv\'ath-Szab\'o contact invariant
$c(\xi)\in \widehat{HF}(-Y)$. Then for a null-homologous knot $K \hookrightarrow Y $ anwith a Seifert surface $F$, we have

\[
\tb(\widetilde K)\;+\;\bigl|\rot_F(\widetilde K)\bigr|
\;\le\;
2\,\tau_\xi(K)\;-\;1,
\]
where $\widetilde K$ is a Legendrian representative of $K$. 
\end{theorem}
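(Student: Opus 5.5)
The statement immediately above is Hedden's theorem, which the paper quotes from \cite{HEDDENcontacttau}. The plan below reproves it by mimicking Plamenevskaya's argument (Theorem~\ref{taubound}) with the contact invariant of $(Y,\xi)$ in place of that of $(S^3,\xi_{\mathrm{std}})$.

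The first step is to reduce to a single inequality. Stabilizing $\widetilde K$ positively or negatively lowers $\tb$ by one and changes $\rot_F$ by $\pm 1$. Hence $\tb+|\rot_F|$ is unchanged under the stabilization that reduces $|\rot_F|$, and it suffices to prove
\[
\tb(\widetilde K)+\rot_F(\widetilde K)\le 2\tau_\xi(K)-1
\]
for every Legendrian representative. Applying this to $\widetilde K$ with reversed orientation, which flips the sign of $\rot_F$ and does not change $\tau_\xi$, then gives the absolute value.

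The main argument uses a Legendrian-adapted open book. Following the Plamenevskaya/LOSS strategy, I would choose an open book for $(Y,\xi)$ compatible with $\xi$ in which $\widetilde K$ sits on a page, with page framing equal to the contact framing. After positive stabilizations of the open book, one can arrange a Heegaard diagram for $-Y$ with the following properties:
\begin{itemize}
\item the contact invariant $c(\xi)\in\widehat{HF}(-Y)$ is represented by a distinguished generator $\mathbf{x}_\xi$ in the corner of the pages;
\item $K$ appears as a doubly pointed knot, so that $\mathbf{x}_\xi$ determines a class in $\widehat{HFK}(-Y,K)$, namely the LOSS invariant $\mathfrak{L}(\widetilde K)$;
\item the Alexander grading of $\mathbf{x}_\xi$ is $A(\mathbf{x}_\xi)=\bigl(\tb(\widetilde K)-\rot_F(\widetilde K)+1\bigr)/2$, computed relative to the Seifert surface $F$.
\end{itemize}
Because $\mathbf{x}_\xi$ is a cycle in the filtered complex that lies in filtration level $A(\mathbf{x}_\xi)$ and maps to $c(\xi)\neq 0$ under $\iota$, the definition of $\tau_{c(\xi)}(-Y,K)$ as a minimal filtration level gives $\tau_{c(\xi)}(-Y,K)\le A(\mathbf{x}_\xi)$. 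Then $\tau_\xi(K)=-\tau_{c(\xi)}(-Y,K)\ge -A(\mathbf{x}_\xi)$, and I would have to track the orientation and mirror conventions (using $\tau_Y(K)=-\tau_{-Y}(m(K))$) to turn this into $2\tau_\xi\ge \tb+\rot_F+1$ with the correct sign.

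I expect the main obstacle to be the grading computation: showing that $A(\mathbf{x}_\xi)$ equals the stated combination of $\tb$ and $\rot_F$ with respect to $F$, and getting all signs consistent. I would first compute the relative grading via a Euler-measure and domain count, following LOSS. Then I would fix the absolute normalization by comparing with Plamenevskaya's bound in $S^3$, using connected sum with a local stabilized unknot to pin down the constant. Finally, I would use that $\tau_\xi$ depends only on $K$ and $\xi$ to conclude.
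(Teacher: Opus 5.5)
The paper gives no proof of this statement; it quotes it from Hedden. Your sketch therefore has to stand on its own. The route through the LOSS invariant can work, but the step that actually produces the inequality is wrong as written.

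In the LOSS diagram $(\Sigma,\beta,\alpha,w,z)$ for $(-Y,\widetilde K)$, the basepoint used to compute $c(\xi)$ is the Honda--Kazez--Mati\'c basepoint. This is the basepoint that the $HFK^-$ differential \emph{avoids}, because $c(\xi)$ is the image of $\mathfrak{L}(\widetilde K)\in HFK^-(-Y,\widetilde K)$ under $U=1$. A disk from $\mathbf{x}$ to $\mathbf{y}$ counted in that $\widehat{CF}(-Y)$ has $n_z=0$, so $A(\mathbf{y})=A(\mathbf{x})+n_w\ge A(\mathbf{x})$. The Alexander grading is therefore a \emph{descending} filtration on this complex. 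Equivalently, it is the standard ascending filtration of the reversed knot, in which $\mathbf{x}_\xi$ sits at level $-A(\mathbf{x}_\xi)$.

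Your assertion that $\mathbf{x}_\xi$ lies in level $A(\mathbf{x}_\xi)=(\tb-\rot_F+1)/2$ of $\mathcal{F}_{-Y}(K,m)$ is false in general. Take the once-stabilized unknot in $(S^3,\xi_{\mathrm{std}})$ with $\tb=-2$, $\rot=1$: your placement gives $\tau_{c(\xi)}(-S^3,K)\le -1$, i.e.\ $0\le -1$. Even taken at face value, your step yields $\tau_\xi(K)\ge -(\tb-\rot_F+1)/2$, which is a \emph{lower} bound on $\tb-\rot_F$. Rewriting $\tau$ via $\tau_Y(K)=-\tau_{-Y}(m(K))$ only renames the invariant; it cannot move $\tb$ to the other side of the inequality.

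The missing idea is to use the LOSS generator of the \emph{reversed} Legendrian $-\widetilde K$. Swap the roles of its two basepoints. That diagram then computes $\widehat{CF}(-Y)$ with the contact basepoint as main basepoint, filtered by the ascending filtration of $K$ itself. In it, $\mathbf{x}_\xi$ is a cycle representing $c(\xi)$ at level $-A(\mathfrak{L}(-\widetilde K))=-(\tb+\rot_F+1)/2$. Hence $\tau_{c(\xi)}(-Y,K)\le -(\tb+\rot_F+1)/2$, which is exactly $\tb+\rot_F\le 2\tau_\xi(K)-1$. Equivalently, a class of $HFK^-(-Y,-K)$ in Alexander grading $a$ whose $U=1$ image is $c(\xi)$ forces $a\le\tau_\xi(K)$.

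Three smaller points:
\begin{itemize}
\item Your stabilization remark is backwards: it is the stabilization that \emph{increases} $|\rot_F|$ that preserves $\tb+|\rot_F|$. Orientation reversal alone gives the reduction, so the remark can simply be dropped.
\item That reduction needs $\tau_\xi(-K)=\tau_\xi(K)$. This is automatic when $\widehat{HF}(-Y)\cong\F$, the only case the paper uses. In general, the symmetry exchanging the two knot filtrations acts on $\widehat{HF}(-Y)$ by the conjugation map, so you need an argument that it fixes $c(\xi)$.
\item Quote $A(\mathfrak{L}(\widetilde K))=(\tb-\rot_F+1)/2$ directly from LOSS, where it comes from a relative $\mathrm{Spin}^c$ computation against $F$. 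Normalizing against Plamenevskaya's bound will not work, since an inequality cannot determine an additive constant.
\end{itemize}
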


We note that when $Y$ is an integer homology sphere, $\bigl|\rot_F(\widetilde K)\bigr|$ is independent of the choice of the Seifert surface $F$.  Moreover, when $Y$ is an integer homology sphere $L$-space with a contact structure having nontrivial contact invariant then the contact $\tau$ invariant coincides with the smooth $\tau$ invariant of $K$, $\tau_Y(K)$. Therefore, Hedden's result can be stated as follows:

\begin{corollary}[\cite{HEDDENcontacttau}]\label{corhedden}
Let $(Y,\xi)$ be a  contact  integer homology sphere $L$-space with nontrivial Ozsv\'ath-Szab\'o contact invariant. Then for any knot $K \hookrightarrow Y $ and a Legendrian representative  $\widetilde K$ of $K$  we have

\[
\tb(\widetilde K)\;+\;\bigl|\rot(\widetilde K)|
\;\le\;
2\,\tau_Y(K)\;-\;1.
\] 
\end{corollary}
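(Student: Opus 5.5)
The plan is to obtain the corollary directly from Hedden's theorem stated just above. We specialize the contact $\tau$-invariant to the case where $\widehat{HF}$ has rank one, and check that it reduces to the smooth invariant $\tau_Y(K)$.

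\textbf{Step 1: hypotheses of Hedden's theorem.} Since $Y$ is an integer homology sphere, $H_1(Y;\Z)=0$. So every knot $K\subset Y$ is null-homologous and bounds a Seifert surface $F$. The rotation number $\rot_F(\widetilde K)$ does not depend on $F$. Indeed, for two Seifert surfaces $F,F'$ the difference $\rot_F-\rot_{F'}$ is $\langle c_1(\xi),[F\cup -F']\rangle$, and $H_2(Y;\Z)=0$ forces this to vanish. So the inequality of Hedden's theorem reads
\[
\tb(\widetilde K)+|\rot(\widetilde K)|\le 2\tau_\xi(K)-1 .
\]

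\textbf{Step 2: identify $c(\xi)$.} Since $Y$ is an $L$-space integer homology sphere, so is $-Y$, and $\widehat{HF}(-Y)\cong\F$ with a single $\Spin^c$ structure. The contact invariant $c(\xi)$ is nonzero by hypothesis, so it must equal the unique nonzero class, namely the generator $\Theta$ of $\widehat{HF}(-Y)$.

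\textbf{Step 3: $\tau_\xi(K)=\tau_Y(K)$.} Recall that Hedden's $\tau_\alpha(-Y,K)$ is the least $m$ such that $\alpha$ lies in the image of $(\iota^m_K)_*:H_*(\mathcal F_{-Y}(K,m))\to\widehat{HF}(-Y)$.
\begin{itemize}
\item When $\widehat{HF}(-Y)\cong\F$, this image is either $0$ or everything.
\item So $\Theta$ lies in the image exactly when $(\iota^m_K)_*$ is nontrivial.
\item Hence $\tau_{c(\xi)}(-Y,K)=\tau_\Theta(-Y,K)$ coincides with the smooth invariant $\tau_{-Y}$ of $K$ regarded as a knot in $-Y$, which is the mirror $m(K)$.
\end{itemize}
Combining this with the definition and the mirror identity $\tau_Y(K)=-\tau_{-Y}(m(K))$ gives
\[
\tau_\xi(K)=-\tau_{c(\xi)}(-Y,K)=-\tau_{-Y}(m(K))=\tau_Y(K).
\]
Substituting into the inequality from Step 1 yields the corollary.

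\textbf{Main obstacle.} The only delicate point is the bookkeeping of orientations and conventions in Step 3. One must check that Hedden's $\tau_\alpha$, for the generator of a rank-one $\widehat{HF}$, is defined by the same filtration $C\{i=0,j\le m\}$ as $\tau_Y$. One must also check that "$K$ in $-Y$" is exactly the knot denoted $m(K)$. I would first verify both against the normalization $\tau_{\xi_{\mathrm{std}}}=\tau$ on $S^3$, which recovers Theorem \ref{taubound}.
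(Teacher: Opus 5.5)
Your proof is correct and takes the same route as the paper. The paper derives the corollary from Hedden's theorem through exactly your two observations: $\rot_F$ does not depend on the Seifert surface when $Y$ is an integer homology sphere, and $\tau_\xi(K)=\tau_Y(K)$ when $\widehat{HF}(-Y)\cong\F$ and $c(\xi)\neq 0$. The paper only asserts these two facts, while you justify them via the rank-one image argument and the mirror identity $\tau_Y(K)=-\tau_{-Y}(m(K))$.
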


\subsection{Bordered Heegaard Floer homology and the immersed curve invariant}\label{subsec:bordered}
 
We summarize the bordered Heegaard Floer machinery of
Lipshitz-Ozsv\'ath-Thurston \cite{LOT} and its reformulation by
Hanselman-Rasmussen-Watson \cite{HRW1,HRW2} via immersed curves in
the punctured torus, restricting attention to the case of torus
boundary, which is the case we will need.
 
\subsubsection*{Bordered invariants for torus boundary}
 
Let $T = \R^2 / \Z^2$ denote the standard torus. The torus algebra
$\mathcal{A} = \mathcal{A}(T)$ is a unital differential graded algebra
over $\F$ generated by certain Reeb chords on $T$, with two distinguished
idempotents $\iota_0, \iota_1$ summing to the identity. Lipshitz, Ozsv\'ath,
and Thurston associate to a compact oriented 3-manifold $M$ with a
parametrization $\phi : T \to \partial M$ two homotopy-equivalence
classes of curved bordered modules
\[
\CFDhat(M, \phi), \qquad \CFAhat(M, \phi),
\]
called respectively the \emph{type-D} and \emph{type-A} invariants;
these are modules and $\mathcal{A}_\infty$-modules over $\mathcal{A}$, respectively.
\subsubsection*{The Hanselman-Rasmussen-Watson immersed curve invariant}
 
Hanselman, Rasmussen, and Watson \cite{HRW1, HRW2} reinterpret the type-D
module of a 3-manifold $M$ with torus boundary as a geometric object: a
finite collection of decorated immersed curves in the punctured torus
\[
T^{\bullet} \;=\; (T \setminus \{z\})
\]
where $z \in T$ is a fixed marked point. Specifically, they construct,
from $\CFDhat(M, \phi)$, a finite collection  of
immersed curves
\[
\Gh(M, \phi) \;=\; \gamma_0 \cup \gamma_1 \cup \dots \cup \gamma_k \;\subset\; \Tpun,
\]
up to regular homotopy with each component decorated with a local system; the components are connected to one another by a finite
collection of \emph{grading arrows} encoding the relative Maslov gradings.

\subsubsection*{The immersed curve knot invariant $\Gh(K)$}

Now let $Y$ be an integer homology sphere $L$-space, and let
$K \subset Y$ be a knot. Since $H_1(Y;\Z)=0$, the knot $K$ has a preferred
Seifert longitude. Let
\[
M_K \;=\; Y \setminus \nu(K)
\]
be the knot exterior, and choose the standard parametrization
\[
\phi_K : T \longrightarrow \partial M_K
\]
which identifies the standard meridian and longitude on $T$ with the meridian
$\mu_K$ and the Seifert longitude $\lambda_K$ of $K$. We define the immersed
curve invariant of $K \subset Y$ to be the immersed curve invariant of this
bordered knot exterior:
\[
\Gh(K) \;:=\; \Gh(M_K,\phi_K).
\]
Thus $\Gh(K)$ is a finite collection of decorated immersed curves in the
punctured torus associated to the bordered manifold $Y \setminus \nu(K)$.

As in the case of knots in $S^3$, it is useful to pass to the infinite cyclic
cover of the punctured torus in which the longitude direction closes up and the
meridian direction does not. We identify this cover with the punctured cylinder
\[
(\R/\Z)\times \R \setminus \{(0,s+\tfrac12)\mid s\in \Z\},
\]
and we use the vertical coordinate as the Alexander-height coordinate. In this
cylindrical model, the invariant is a finite collection of oriented immersed
curves
\[
\Gh(K) \;=\; \gamma_0 \cup \gamma_1 \cup \cdots \cup \gamma_k,
\]
together with possible local systems and grading arrows encoding relative
Maslov gradings. We suppress the local systems from the notation, but they are
part of the invariant.

The meridional filling of $M_K$ is $Y$. Therefore the bordered-Floer pairing
theorem gives
\[
\rk \widehat{HF}(Y)
\;=\;
\min \bigl|\Gh(K)\pitchfork \mu\bigr|,
\]
where $\mu$ denotes a meridian line in the punctured torus. Since $Y$ is an
integer homology sphere $L$-space, we have
\[
\rk \widehat{HF}(Y)=1.
\]
Consequently, after putting $\Gh(K)$ in minimal position with $\mu$, there is a
unique distinguished component $\gamma_0$ of $\Gh(K)$ meeting $\mu$. This
component wraps once around the punctured cylinder homologically. We orient
$\gamma_0$ from left to right. All other components of $\Gh(K)$ are
null-homologous in the punctured cylinder and can be placed in a neighborhood
of the vertical line through the marked points.

\subsubsection*{Relation with the $UV=0$ knot Floer complex}

The curve system $\Gh(K)$ is equivalent to the $UV=0$ quotient of the knot
Floer complex of the pair $(Y,K)$. let $CFK(Y,K)$ denote the
knot Floer complex over $\F[U,V]$. Define
\[
C_{UV=0}(Y,K)
\;:=\;
CFK(Y,K)\otimes_{\F[U,V]} \F[U,V]/(UV).
\]
This quotient remembers the vertical and horizontal differentials of the knot
Floer complex and kills every term involving both a positive power of $U$ and a
positive power of $V$.

The quotient $C_{UV=0}(Y,K)$ can be recovered directly from $\Gh(K)$. Let
$\mu$ denote a meridional line in the punctured cylinder, slightly perturbed
away from the marked points. The generators of the corresponding immersed-curve
complex are the intersection points
\[
x \in \Gh(K)\pitchfork \mu.
\]
The differential counts immersed bigons whose boundary lies on $\Gh(K)$ and
$\mu$. After replacing each marked point by a nearby pair of basepoints
$w$ and $z$, a bigon $\phi$ contributes the monomial
$
U^{n_w(\phi)}V^{n_z(\phi)}$. Since the coefficient ring is $\F[U,V]/(UV)$, only bigons covering $w$-basepoints
or $z$-basepoints, but not both, contribute. Thus the immersed-curve complex
obtained from $\Gh(K)$ is chain homotopy equivalent to
$C_{UV=0}(Y,K)$ (See Theorem 51 \cite{HRW2} and Theorem 1.2 \cite{hanselman2023knotfloerhomologyimmersed}).

Conversely, starting from a reduced model of $C_{UV=0}(Y,K)$, one constructs
$\Gh(K)$ by translating the vertical and horizontal arrows in the complex into
arcs in the punctured cylinder.

The full knot Floer complex may contain mixed
terms $U^aV^b$ with $a,b>0$; these terms are not visible in $\Gh(K)$. Hanselman \cite{OSzKnotInvariants} has developed the theory of decorated immersed curves to capture the full knot Floer complex. For our purpose, it suffices to restrict our attention to the weaker invariant $\Gh(K)$.

\subsubsection*{The height grading and the genus}

The vertical coordinate in the cylindrical model records the Alexander grading.
For an intersection point
\[
x \in \Gh(K)\pitchfork \mu,
\]
we say that $x$ has height $s$ if its vertical coordinate lies between the
marked points at heights $s-\frac12$ and $s+\frac12$. Equivalently, the height
of $x$ is the Alexander grading of the corresponding generator of the
$UV=0$ knot Floer complex.

The Seifert genus of $K \subset Y$ is the maximal Alexander
grading in knot Floer homology \cite{Ozsvth2003HolomorphicDA}:
\[
g(K)
\;=\;
\max\bigl\{|A| \mid \widehat{HFK}(Y,K,A)\neq 0\bigr\}.
\]
In the immersed-curve picture, this is the largest Alexander height reached by
the reduced curve system. Equivalently,
\[
g(K)
\;=\;
\max\bigl\{|s| \mid \Gh(K)\pitchfork \mu
\text{ has a generator of height } s\bigr\}.
\]
\subsubsection*{The vertical-segment counts $n_s$ and $n$}

We now put $\Gh(K)$ in the pulled-tight position used by Hanselman. In this position, it is vertical everywhere except near a small neighborhood of marked points where it is round. These small round neighborhoods of marked points are referred to as pegs. The
distinguished component $\gamma_0$ contains one segment which leaves a
neighborhood of the meridian line and wraps once around the cylinder. The
remaining relevant segments are vertical, up to a small perturbation, and run
between consecutive pegs.

A vertical segment is said to have height $s$ if it connects the peg at height
$s-\frac12$ to the peg at height $s+\frac12$. Define
\[
n_s
\;:=\;
\#\{\text{vertical segments of } \Gh(K) \text{ at height } s\},
\]
and define
\[
n
\;:=\;
\sum_{s\in \Z} n_s.
\]
Only finitely many of the integers $n_s$ are nonzero. There is a $\pi$-rotation symmetry of $\Gh(K)$ which implies
\[
n_s \;=\; n_{-s}.
\]
 \subsubsection*{Maslov gradings and grading arrows}

The grading arrows on $\Gh(K)$ encode the relative Maslov gradings between the
components of the curve system. Since $Y$ is an integer homology sphere, the
knot Floer complex of $(Y,K)$ has an absolute Maslov grading, normalized so that
the unique generator of $\widehat{HF}(Y)$ lies in grading $d(Y)$.

The Alexander grading of an intersection point is its height, while its Maslov
grading is determined by the rotation, winding, and grading-arrow data on
$\Gh(K)$.

\subsection{Past results in smooth cosmetic surgery}

\subsubsection*{Boyer-Lines: the Casson-Walker obstruction}

The earliest non-trivial obstruction to a cosmetic surgery pair was proved by Boyer-Lines \cite{BoyerLines}, using the Casson-Walker invariant and Walker's surgery formula.
 
\begin{theorem}[Boyer-Lines, \cite{BoyerLines}]\label{thm:BL}
Let $K \subset S^3$ be a knot admitting a truly cosmetic surgery pair.
Then $\Delta_K''(1) = 0$.
\end{theorem}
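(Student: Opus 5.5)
The statement to prove is the Boyer--Lines obstruction (Theorem~\ref{thm:BL}). The plan is to compare Casson--Walker invariants of the two surgeries via the surgery formula, with the lens space terms handled by a Dedekind-sum parity argument.

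\emph{Setup.} Suppose $S^3_{r_1}(K)\cong S^3_{r_2}(K)$ by an orientation-preserving diffeomorphism, with $r_1\neq r_2$. Write $r_i=p_i/q_i$ with $q_i>0$. Since $|H_1|=|p_i|$, we have $|p_1|=|p_2|=:p$.

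The case $p=0$ cannot occur: both slopes would then be $0$. So $p\ge 1$, and there are two cases:
\begin{itemize}
\item $r_1=p/q_1$ and $r_2=\pm p/q_2$;
\item in the same-sign case, $q_1\neq q_2$.
\end{itemize}

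\emph{Step 1: equate Casson--Walker invariants.} Since $\lambda$ is an invariant of oriented manifolds, the surgery formula (with $\lambda(S^3)=0$) gives
\[
\lambda(L(p_1,q_1))+\frac{q_1}{2p_1}\Delta_K''(1)=\lambda(L(p_2,q_2))+\frac{q_2}{2p_2}\Delta_K''(1).
\]
Here $\lambda(L(p,q))$ is an explicit multiple of the Dedekind sum, $-\tfrac12 s(q,p)$ up to normalization. If $\Delta_K''(1)\neq 0$, this relation pins down $\Delta_K''(1)$ as a combination of Dedekind sums. The goal is to show that this forces a contradiction.

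\emph{Step 2: use a second invariant.} To remove the lens space terms, I would use the Casson--Gordon invariant relation stated above:
\[
\sigma^{\mathrm{CG}}(Z)=\sigma^{\mathrm{CG}}(L(p,q))-\sigma(K,p).
\]
Since $\sigma(K,p)$ depends only on $K$ and $p$, and not on $q$, applying it to both surgeries gives
\[
\sigma^{\mathrm{CG}}(L(p_1,q_1))=\sigma^{\mathrm{CG}}(L(p_2,q_2))
\]
in the same-sign case. As remarked in the preliminaries, this forces $\lambda(L(p_1,q_1))=\lambda(L(p_2,q_2))$. Step 1 then yields
\[
\frac{q_1-q_2}{2p}\,\Delta_K''(1)=0,
\]
so $\Delta_K''(1)=0$ because $q_1\neq q_2$.

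\emph{Step 3: the opposite-sign case.} Here $r_1=p/q_1$ and $r_2=-p/q_2$. The Casson--Walker terms have
\[
\lambda(L(-p,q))=-\lambda(L(p,q)),
\]
and the Casson--Gordon signatures change sign in the same way. The Casson--Gordon equality now reads
\[
\sigma^{\mathrm{CG}}(L(p,q_1))-\sigma(K,p)=-\sigma^{\mathrm{CG}}(L(p,q_2))-\sigma(K,p'),
\]
where $\sigma(K,p')$ denotes the corresponding signature term for the negative surgery. This alone does not equate the lens space terms.

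I would instead handle this case by a direct argument. The Casson--Walker equation becomes
\[
\bigl(s(q_1,p)+s(q_2,p)\bigr)\cdot c=\frac{q_1+q_2}{2p}\,\Delta_K''(1)
\]
for an explicit normalizing constant $c$. I would then combine it with the Casson--Gordon relation, noting that $\sigma(K,p)$ for $-p$ surgery is the negative of the mirror's signature, to again cancel the lens terms. Since $q_1+q_2>0$, the coefficient of $\Delta_K''(1)$ is nonzero, and $\Delta_K''(1)=0$ follows.

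\emph{Main obstacle.} I expect Step 3 to be the hardest part: getting the Casson--Gordon sign conventions right so that the lens space terms genuinely cancel. If the cancellation fails, the fallback is Boyer--Lines' original route. That route compares the Dedekind sum relation modulo integers, using the reciprocity law and the fact that $12p\,s(q,p)$ is an integer with prescribed residue. The aim is to show that the required value of $\Delta_K''(1)$ cannot be a nonzero even integer satisfying the congruence.
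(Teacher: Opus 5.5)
Your proposal is correct and follows essentially the argument the paper gives in Proposition~\ref{cosmeticconstraint}(1), which reproduces Boyer--Lines: the Casson--Gordon surgery formula cancels the knot term and forces the lens-space Dedekind sums, hence the lens-space Casson--Walker terms, to agree, after which the Casson--Walker surgery formula gives $\frac{q_1\mp q_2}{2p}\Delta_K''(1)=0$. Your Step~3 worry dissolves exactly as you suggest: write $-p/q_2$ as $p/(-q_2)$, and note that the knot term $\sigma(K,p)$ is the same for both slopes because Tristram--Levine signatures change sign under mirroring; the opposite-sign case is then literally Step~2 with $q_2$ replaced by $-q_2$, which is how the paper writes it, so the Dedekind-sum congruence fallback is never needed.
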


\subsubsection*{The Heegaard Floer obstructions: Wu and Ni-Wu}
The Heegaard Floer invariants provided sharper constraints to cosmetic surgery.
Wu \cite{Wu} showed that same-sign cosmetic surgeries are obstructed
in any $L$-space integer homology sphere:
 
\begin{theorem}[{\cite{Wu}}]\label{thm:Wu}
Let $K$ be a nontrivial knot in an integer homology sphere $L$-space   $Y$,
and $r, r'$ two distinct rational numbers of the same sign. Then
$Y_r(K) \not\cong Y_{r'}(K)$.
\end{theorem}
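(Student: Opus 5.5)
We may assume the common sign is positive, since the case of two negative slopes follows by passing to the mirror $m(K)\subset -Y$; note that $-Y$ is again an integer homology sphere $L$-space. Write the slopes as $r=p/q$ and $r'=p'/q'$ in lowest terms. We have $|H_1(Y_r(K))|=p$ and $|H_1(Y_{r'}(K))|=p'$, so a homeomorphism forces $p=p'$. Since $r\neq r'$, we then have $q\neq q'$, say $q<q'$. The plan is to find an invariant of $\HF^+$ that depends on $q$ in a strictly monotone way once $p$ is fixed, and that is constant in $q$ only when $K$ is trivial.

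\textbf{Step 1 (mapping cone in the $L$-space setting).} Since $Y$ is an $L$-space, $H_*(B^+)\cong\mathcal{T}^+$. Each $H_*(A_s^+)$ splits as $\mathcal{T}^+\oplus A_s^{\mathrm{red}}$, and on the towers $v_s$ and $h_s$ are multiplication by $U^{V_s}$ and $U^{H_s}$, with $H_s=V_{-s}$. I will then run the argument of Ni--Wu inside Theorem~\ref{thm:rationalsurgeryformula}, exactly as for $S^3$, to get two consequences.

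The first is the $d$-invariant formula: for $p/q>0$ and $0\le i<p$,
\[
d(Y_{p/q}(K),i)=d(Y)+d(L(p,q),i)-2\max\bigl\{V_{\lfloor i/q\rfloor},\,V_{\lfloor (p+q-1-i)/q\rfloor}\bigr\}.
\]
The second is the rank formula: the total rank of $\HF_{\mathrm{red}}(Y_{p/q}(K))$ equals $q\sum_{s}\rk A^{\mathrm{red}}_s$ plus a term coming from the tower truncations, and each is a nondecreasing function of $q$.

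\textbf{Step 2 (comparing invariants).} An orientation-preserving homeomorphism $Y_{p/q}(K)\cong Y_{p/q'}(K)$ preserves the multiset of $d$-invariants and the total rank of $\HF_{\mathrm{red}}$. For the $d$-invariants, use the Casson--Walker formula $|H_1|\lambda=\sum_{\s}\bigl(\chi(\HF_{\mathrm{red}})-\tfrac12 d\bigr)$ together with Walker's surgery formula. This removes the lens space contributions, because Casson--Gordon and Dedekind sum considerations pin down $\lambda(L(p,q))$. One is left with an equality of the form
\[
q\,\Phi(K)=q'\,\Phi(K),
\]
where $\Phi(K)$ collects $\sum_s V_s$-type terms and $\sum_s\chi(A^{\mathrm{red}}_s)$. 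Separately, equality of the total reduced ranks gives
\[
q\sum_s\rk A_s^{\mathrm{red}}+(\text{truncation term in }q)=\bigl(\text{same with }q'\bigr).
\]
Each side is nondecreasing in $q$, and strictly increasing as soon as $V_0>0$ or some $A_s^{\mathrm{red}}\neq 0$. With $q\neq q'$, this forces $V_s=0$ for all $s$ and $A^{\mathrm{red}}_s=0$ for all $s$.

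\textbf{Step 3 (triviality), the main obstacle.} I then need to show that these vanishing conditions force $K$ to be the unknot. The vanishing of all $A_s^{\mathrm{red}}$ means every large positive surgery on $K$ is an $L$-space, so $K$ is an $L$-space knot in $Y$. For such knots the complex $\CFKINFTY(Y,K)$ is a staircase, and $V_0=0$ forces the staircase to be trivial. Genus detection, $g(K)=\max\{|A|:\widehat{HFK}(Y,K,A)\neq0\}$, then gives $g(K)=0$, contradicting nontriviality.

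The delicate points are twofold. First, the staircase structure theorem for $L$-space knots must be justified in an $L$-space integer homology sphere rather than $S^3$. Here I would try the immersed-curve description of \S\ref{subsec:bordered}: no vertical segments plus $\gamma_0$ staying at height $0$ means $\Gh(K)$ is a horizontal line, hence $g(K)=0$. Second, the strict monotonicity in Step~2 must be tracked carefully, using the floor indices $\lfloor i/q\rfloor$ in the $d$-invariant formula.
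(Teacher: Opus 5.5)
The paper gives no proof of this statement; it quotes it from Wu. So I am judging your argument on its own terms, and it has a genuine gap in Step~2.

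\textbf{The gap.} Step~2 asserts that the total rank of $\HF_{\mathrm{red}}(Y_{p/q}(K))$ is strictly increasing in $q$ as soon as $V_0>0$. This is false. Carrying out your own count, the tower-truncation contribution is
\[
q\Bigl(V_0+2\sum_{k\ge1}V_k\Bigr)-\sum_{i=0}^{p-1}\max\bigl\{V_{\lfloor i/q\rfloor},\,V_{\lceil (p-i)/q\rceil}\bigr\}.
\]
For an $L$-space knot (all $A_s^{\mathrm{red}}=0$) this term is all of $\rk\HF_{\mathrm{red}}$, so it vanishes whenever $Y_{p/q}(K)$ is an $L$-space. Take a local right-handed trefoil $T\subset Y$: there $V_0=1$ and $V_k=0$ for $k\ge1$, so the term equals $q-\min(p,q)$. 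Hence $Y_5(T)$ and $Y_{5/2}(T)$ both have $\HF_{\mathrm{red}}=0$ even though $V_0=1$.

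Grant that the truncation term is nondecreasing. You assert this but do not prove it; Ozsv\'ath--Szab\'o's formula for $\rk\HFhat(Y_{p/q}(K))$, transplanted to $Y$, would make the monotonicity manifest. Even then, the rank comparison yields only $A_s^{\mathrm{red}}=0$ for all $s$ and says nothing about the $V_s$. The case it misses is a nontrivial $L$-space knot with both surgeries $L$-spaces. That is exactly the hard case of the theorem, and Step~3 is being fed the hypothesis $V_0=0$, which has not been established.

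\textbf{The fix.} The missing ingredient is already in your Step~2, but you never use it for this purpose.
\begin{enumerate}
\item Once Casson--Gordon gives $\lambda(L(p,q))=\lambda(L(p,q'))$, Walker's formula alone yields $(q-q')\Delta_K''(1)=0$, hence $\Delta_K''(1)=0$. This is the same computation as in Proposition~\ref{cosmeticconstraint}(1). It is your $\Phi(K)=0$. Rewriting $\lambda$ through $d$-invariants and $\chi(\HF_{\mathrm{red}})$ adds nothing, since that expression agrees with Walker's formula identically.
\item Since $A_s^{\mathrm{red}}=0$ for all $s$, large positive surgeries are $L$-spaces, so $K$ is an $L$-space knot.
\item Write $\Delta_K(t)=a_0+\sum_{j\ge1}a_j(t^j+t^{-j})$ and let $t_s(K)=\sum_{j\ge1}j\,a_{|s|+j}$ be the torsion coefficients. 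For an $L$-space knot, comparing Euler characteristics of large surgeries gives $t_s(K)=V_{|s|}\ge 0$.
\item For any knot, $\tfrac12\Delta_K''(1)=\sum_{s\in\Z}t_s(K)$, which here equals $V_0+2\sum_{s\ge1}V_s$.
\item Since this sum is zero and every term is nonnegative, every $V_s$ vanishes. Only at this point does your Step~3 apply.
\end{enumerate}

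In short, the rank comparison disposes of knots with some $A_s^{\mathrm{red}}\neq 0$. For $L$-space knots, and in particular whenever both surgeries are $L$-spaces, the Casson--Walker/Casson--Gordon input must do the work. Your write-up assigns that job to the rank, which cannot do it.
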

 
Together with the Boyer-Lines vanishing of $\Delta_K''(1)$, this is
improved by Ni-Wu , which further constrains the possible slopes:
 
\begin{theorem}[{\cite[Theorem~1.2]{NiWu}, \cite{Wu}}]\label{thm:NiWu}
Let $K$ be a knot in an integer homology sphere $Y$ admitting a truly
cosmetic surgery pair. After possibly replacing $K$ by its mirror
$m(K) \subset -Y$, the cosmetic surgery slopes can be taken to be
$r_1 = p/q_1$ and $r_2 = -p/q_2$ with $p > 0$ and $q_1, q_2 > 0$
coprime to $p$.
\end{theorem}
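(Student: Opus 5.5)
The plan is to combine elementary homological constraints with Wu's same-sign obstruction (Theorem~\ref{thm:Wu}) and the behaviour of surgery under mirroring. Throughout, $Y$ is an integer homology sphere $L$-space, as in the standing convention of the paper.

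\emph{Step 1: equal numerators.} Let $r_1=p_1/q_1$ and $r_2=p_2/q_2$ be distinct rational slopes, written in lowest terms, with $Y_{r_1}(K)\cong Y_{r_2}(K)$ orientation-preservingly. Since $H_1(Y;\Z)=0$, the Seifert longitude is well defined and $H_1(Y_{p/q}(K);\Z)\cong \Z/|p|$. The homeomorphism therefore forces $|p_1|=|p_2|=:p$.

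If $p=0$, both slopes equal $0$, contradicting $r_1\neq r_2$; hence $p>0$. I would normalize each $q_i$ to be positive by moving signs into the numerators. Coprimality with $p$ is automatic because the fractions are reduced.

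\emph{Step 2: opposite signs.} If $r_1$ and $r_2$ had the same sign, Theorem~\ref{thm:Wu} would give $Y_{r_1}(K)\not\cong Y_{r_2}(K)$ for the nontrivial knot $K$, which is a contradiction. So one slope is positive and the other negative. Relabel so that $r_1=p/q_1>0$ and $r_2=-p/q_2<0$.

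\emph{Step 3: mirroring.} The mirror is needed if one also wants the normalization compatible with a choice such as $q_1\le q_2$ or a sign of $\tau_Y(K)$. For this I use the identity
\[
Y_r(K)\;\cong\; -\bigl((-Y)_{-r}(m(K))\bigr),
\]
which holds as oriented manifolds. An orientation-preserving homeomorphism $Y_{r_1}(K)\cong Y_{r_2}(K)$ therefore yields one between $(-Y)_{-r_1}(m(K))$ and $(-Y)_{-r_2}(m(K))$. Mirroring thus exchanges the roles of the positive and negative slopes. Moreover, $-Y$ is again an integer homology sphere $L$-space, so the earlier steps apply verbatim.

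\emph{Main obstacle: the slope $\infty$.} The only delicate point is to exclude a pair involving $\infty$, which Step~1 does not rule out by itself. If one slope is $\infty$, then $Y_{1/q}(K)\cong Y$ with $q\neq 0$. I would rule this out via the rational surgery formula (Theorem~\ref{thm:rationalsurgeryformula}). For a nontrivial $K$, either $\HF^+(Y_{1/q}(K))$ acquires reduced homology, or the $d$-invariant shifts, since $d(Y_{1/q}(K))$ is computed from $V_0$ and $d(Y)$. This contradicts the fact that $Y$ is an $L$-space with the given $d$-invariant. This is essentially the same mapping-cone argument underlying Wu's theorem, so I expect it to carry over directly. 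Alternatively, one can simply observe that the statement concerns rational slopes only.
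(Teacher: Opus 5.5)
Your argument is correct and matches how the paper uses this cited result. $H_1(Y_{p/q}(K))\cong\Z/|p|$ forces equal numerators, and Wu's same-sign obstruction (Theorem~\ref{thm:Wu}) forces opposite signs; as you note, that theorem requires $Y$ to be an $L$-space and $K$ to be nontrivial. Mirroring is only a normalization, exactly as it is invoked in the proof of Theorem~\ref{tauresult}. The paper gives no separate proof: it simply cites Ni--Wu and Wu and implicitly treats slopes as rational, so your discussion of excluding $\infty$ goes beyond what the paper addresses.
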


\subsubsection*{Hanselman's theorem and its statement for $L$-spaces}
 
Hanselman's contribution, substantially sharpens Ni-Wu constraints by giving the precise slope values:
 
\begin{theorem}[{\cite[Theorem~2]{Han}}]\label{thm:Hanselman}
Let $K \subset S^3$ be a nontrivial knot, and suppose
$S^3_r(K) \cong S^3_{r'}(K)$ for $r \neq r'$. Then
\begin{enumerate}
\item[\rm (i)] The pair $\{r, r'\}$ is either $\{\pm 2\}$ or $\{\pm 1/q\}$
for some $q \in \Z_{> 0}$, with
\[
q \;=\; \frac{n_0 + 2 \sum_{s \geq 1} n_s}{4 \sum_{s \geq 1} s^2 n_s}
\qquad \text{in the second case};
\]
\item[\rm (ii)] if $\{r, r'\} = \{\pm 2\}$, then $g(K) = 2$ and
$n_0 = 2 n_1$;
\item[\rm (iii)] if $\{r, r'\} = \{\pm 1/q\}$, then
\[
q \;\leq\; \frac{\Th(K) + 2 g(K)}{2 g(K) (g(K) - 1)}.
\]
\end{enumerate}
\end{theorem}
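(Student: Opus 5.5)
The plan is to work throughout with the immersed curve invariant $\Gh(K)$ in its pulled-tight (pegboard) position and with the pairing theorem. For a slope $r=p/q$, the pairing theorem computes $\HFhat(S^3_{r}(K))$, with its $\Spin^c$ splitting and relative Maslov gradings, as the Lagrangian Floer homology of $\Gh(K)$ with a line of slope $r$ in the punctured torus.

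\textbf{Step 1: reduce to symmetric slopes.} By Theorem~\ref{thm:NiWu} and Ni--Wu's vanishing $\tau(K)=0$, after mirroring we may take the slopes to be $p/q$ and $-p/q'$ with $p,q,q'>0$. By Theorem~\ref{thm:BL} we may also assume $\Delta_K''(1)=0$. Since $\tau=0$, the distinguished component $\gamma_0$ can be pulled tight to be essentially horizontal at height $0$. Counting minimal intersections of the slope-$p/q$ line with the pegged curve then gives
\[
\rk \HFhat(S^3_{p/q}(K)) \;=\; p + q\,n' ,
\]
where $n'$ is a fixed linear combination of the vertical-segment counts $n_s$. The point is that every vertical segment of height $s$ is crossed $q$ times, independently of the sign of the slope. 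Equating ranks for $p/q$ and $-p/q'$ then forces $q=q'$ whenever $K$ is nontrivial, i.e.\ whenever some $n_s\neq 0$.

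\textbf{Step 2: extract a graded equation.} A homeomorphism $S^3_{p/q}(K)\cong S^3_{-p/q}(K)$ restricts to the associated lens space data. This forces $L(p,q)\cong -L(p,q)$ at the level of $\Spin^c$ labels and linking forms, which gives $q^2\equiv -1 \pmod p$. I would then compute the absolute Maslov grading of each intersection point. For an intersection on a vertical segment at height $s$ in $\Spin^c$ structure $i$, the grading is the corresponding lens-space $d$-invariant plus a correction quadratic in $s$, with sign opposite for the two orientations of the slope. Summing a suitable graded quantity over all $\Spin^c$ structures produces an identity of the form
\[
\sum_s n_s\,\bigl(\text{linear in } q\bigr) \;=\; \sum_s n_s\,\bigl(\text{quadratic in } s\bigr)\cdot p .
\]
The graded quantity could be the sum of gradings, equivalently a refinement of Casson--Walker, or the total $d$-invariant sum matched across the homeomorphism. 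The lens-space terms cancel because of the symmetric form of the slopes, and the $\Delta_K''(1)=0$ contribution drops out.

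\textbf{Step 3: pin down the slopes and genus.} The identity from Step 2, combined with integrality and the symmetry $n_s=n_{-s}$, should force $p\in\{1,2\}$. For $p=1$ it gives exactly $q=(n_0+2\sum_{s\ge1}n_s)/(4\sum_{s\ge1}s^2n_s)$, which is (i). For $p=2$ (so $q=1$, slopes $\pm2$), the identity can only balance if vertical segments occur only at heights $0,\pm1$ with $n_0=2n_1$. Since segments at height $\pm1$ run between pegs at $\pm\tfrac12$ and $\pm\tfrac32$, this gives $g(K)=2$, which is (ii). For (iii), I would bound $\sum_s n_s$ above by the thickness $\Th(K)$ plus boundary terms $2g$, using that each vertical segment contributes generators in a bounded diagonal band. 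I would bound $\sum s^2 n_s$ below using the segment reaching height $g$, which forces segments at all heights $1,\dots,g-1$ as well. Substituting both bounds into the formula for $q$ gives the stated inequality.

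\textbf{Main obstacle.} The main difficulty is Step 2: computing the Maslov gradings of intersection points precisely enough, including grading-arrow contributions from non-distinguished components, that the $\pm$ comparison yields a clean linear identity rather than just an inequality. I would first try the version in which only the $d$-invariants and the total rank are used, together with the grading shift of vertical segments read off from winding numbers around pegs. I would fall back on explicit Dedekind-sum identities for the lens-space part if the cancellation is not immediate.
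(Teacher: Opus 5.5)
Your overall framework is the right one, and it matches the sketch in Section~\ref{sec:Hanselman-extension]} (the paper only cites this theorem). It consists of: pulled-tight $\Gh(K)$ with $\gamma_0$ horizontal, the rank count $p+nq$ forcing $q=q'$, and the grading shift $\Drel(x)=1-2|A(x)|-4k(x)$ summed over generators. For $p=1$ the total identity $\sum_x\Drel(x)=0$ does give exactly $q=(n_0+2\sum_{s\ge1}n_s)/(4\sum_{s\ge1}s^2n_s)$.

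The genuine gap is Step~3. One identity summed over \emph{all} $\Spin^c$ structures cannot force $p\in\{1,2\}$, nor $n_s=0$ for $|s|\ge2$, nor $q=1$ when $p=2$. For instance, if $p/q>1$ and every vertical segment has height $0$ or $\pm1$, no triangle contains a marked point. The total identity then reduces to $qn_0-2qn_1=0$, independently of $p$. So the slopes $\pm5/2$ on a genus-two knot with $n_0=2n_1$ pass it, and Casson--Walker does not help since $2^2\equiv-1\pmod 5$. Likewise, the negative contribution of segments of height $\ge2$ can always be offset numerically by a larger $n_0$.

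The missing idea is that the homeomorphism identifies $\HFhat$ one $\Spin^c$ structure at a time, with absolute gradings pinned by $d(S^3_{\pm p/q}(K),i)=\pm d(L(p,q),i)$. If $p/q>1$ and some $n_s\neq0$ with $s\ge2$, the structures $\{0,\dots,q-1\}$ have strictly larger rank than the rest. So the induced bijection of $\Spin^c$ structures preserves this set, and summing $d$-invariants over it gives $\sum_{i=0}^{q-1}d(L(p,q),i)=0$. This contradicts \cite[Lemma~23]{Han} because $q^2\equiv-1\pmod p$. The case $p/q<1$ is handled the same way with $q=ap+r$ and the set $\{0,\dots,r-1\}$, giving $p=1$; this also disposes of $\pm2/q$ with $q>1$. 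Finally, $p/q=2$ is isolated among $p/q>1$ by balancing $\Drel$-sums $\Spin^c$-type by type, not globally.

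Three other steps are wrong as written.

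First, $\tau(K)=0$ does not make $\gamma_0$ horizontal. Horizontality is equivalent to $\varepsilon(K)=0$, and there are knots with $\tau(K)=0$ but $\varepsilon(K)\neq0$. You need the stronger output of Ni--Wu's argument: $d(S^3_{p/q}(K),i)=d(L(p,q),i)$ together with its analogue for $\overline{K}$, i.e.\ $V_0(K)=V_0(\overline{K})=0$, which does force $\varepsilon(K)=0$. Without this the rank is $|p-mq|+nq$, and your count $p+qn'$ is not available.

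Second, an isometry of linking forms only shows that $-1$ is a square modulo $p$ (so $p=5$, $q=1$ would pass); it does not give $q^2\equiv-1\pmod p$. That congruence comes from Casson--Walker. With $\Delta_K''(1)=0$ and $q=q'$ one gets $\lambda(L(p,q))=\lambda(L(p,-q))=-\lambda(L(p,q))$. Hence the Dedekind sum $s(q,p)$ vanishes, which is equivalent to $q^2\equiv-1\pmod p$.

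Third, your route to (iii) fails because $n$ is not bounded by $\Th(K)+2g(K)$. The stevedore knot $6_1$ is thin with $\tau=\varepsilon=0$ and $g=1$, yet $n_0=4$. The connected sum of $m$ copies has $\Th=0$, $g=m$ and $n=(9^m-1)/2$. Since $n_0$ can be large compared with $\sum_s s^2n_s$, no estimate inserted into the formula of (i) can bound $q$. The bound in (iii) has to come from an extremal generator instead. On the height-$(g-1)$ segments the shift $\Drel$ is about $-2qg(g-1)$. A graded identification cannot move a generator by more than the spread of relative gradings of $\HFhat(S^3_{\pm1/q}(K))$, and that spread is governed by $\Th(K)$.
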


Hanselman \cite{Han} remarks that ``$S^3$ can be replaced with any integer homology sphere $L$-space''. We will sketch an outline of the argument in Section~4 to justify the claim.

\section{Cosmetic surgery obstructions on integer homology sphere $L$-spaces }

The goal of this section is to extend Ni-Wu's result to the integer homology sphere $L$-space $Y$. The argument closely follows their proof, with some additional grading bookkeeping for $Y$.

\subsection*{ A $d$-invariant formula for rational surgery in integer homology sphere $L$-spaces  }
We begin by reviewing the key invariants from the knot Floer complex and mapping cone construction of the rational surgery formula following the notational convention of \cite{NiWu,Wu}. Throughout this section, we will assume $Y$ is an integer homology sphere $L$-space. The complexes $A_s^+$, $B^+$ and the mapping cone  $D^+_{i,p/q} \colon \mathbb{A}_i^+ \to \mathbb{B}_i^+$ are defined from knot Floer complex of knot $K$ in $Y$ as in Section \ref{hfprelim}. Let $\mathfrak{A}_k^+ := H_*(A_k^+)$,
$\mathfrak{B}^+ := H_*(B^+)$, and denote the induced maps on homology
by $\mathfrak{v}_k^+, \mathfrak{h}_k^+ \colon \mathfrak{A}_k^+ \to
\mathfrak{B}^+$. Let $\mathfrak{D}^+_{i,p/q}$ be the map induced by $D^+_{i,p/q}$ on homology.

\medskip
When $Y$ is an $L$-space integer homology sphere, $\HF^+(Y) \cong \mathcal{T}^+$ and $\HF_{\mathrm{red}}(Y) = 0$.  Consequently
$\mathfrak{B}^+ \cong \mathcal{T}^+$  for every $k$.
Every $\F[U]$-equivariant endomorphism of the tower $\mathcal{T}^+$ is multiplication by $U^n$ for some $n \geq 0$.  Define the non-negative integers $V_k$ and $H_k$ by
\begin{equation}\label{eq:VkHkdef}
\mathfrak{v}_k^+\big|_{\mathfrak{A}_k^T} \;\sim\; U^{V_k}, \qquad
\mathfrak{h}_k^+\big|_{\mathfrak{A}_k^T} \;\sim\; U^{H_k},
\end{equation}
where $\mathfrak{A}_k^T := U^n \mathfrak{A}_k^+$ for $n \gg 0$
denotes the tower part.  

We state two important lemmas concerning the monotonicity and relation between these invariants.
\begin{lemma}[{\cite[Lemma~2.4]{NiWu}}]\label{lem:monotone}
$V_k \geq V_{k+1}$ and $H_k \leq H_{k+1}$ for all $k$.
Moreover, $V_k = 0$ for $k \geq g(K)$ and $H_k = 0$ for $k \leq -g(K)$.
\end{lemma}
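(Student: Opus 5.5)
We prove the two monotonicity statements and the two vanishing statements by comparing the quotient complexes $A_k^+$ for consecutive $k$, exactly as in the $S^3$ case. The only input about $Y$ needed is that $\mathfrak{B}^+\cong\mathcal{T}^+$, which holds because $Y$ is an integer homology sphere $L$-space.

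\textbf{Monotonicity of $V_k$.} Since
\[
\{i\ge 0 \text{ or } j\ge k+1\}\subset\{i\ge 0\text{ or } j\ge k\},
\]
$A_{k+1}^+$ is a quotient complex of $A_k^+$. Let $\pi\colon A_k^+\to A_{k+1}^+$ denote the projection. Both $v_k$ and $v_{k+1}$ are projection onto $C\{i\ge 0\}$, so $v_k = v_{k+1}\circ\pi$ on the chain level. Passing to homology and restricting to the tower parts (large $U$-powers), we get $U^{V_k}=U^{V_{k+1}}\circ \pi_*$ on towers. Here $\pi_*$ maps $\mathfrak{A}_k^T$ to $\mathfrak{A}_{k+1}^T$ by a $U$-equivariant map. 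It must be nonzero in high degrees, because the composite with $v_{k+1}$ is nonzero there: $v_k$ is an isomorphism in sufficiently high degrees, where all of these complexes agree with $CF^\infty$. An equivariant map between towers that is nonzero in high degree is multiplication by $U^m$ with $m\ge 0$. Hence $V_k=V_{k+1}+m\ge V_{k+1}$.

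\textbf{Monotonicity of $H_k$.} The same argument applies with the roles of $i$ and $j$ exchanged: $h_k$ factors through the projection of $A_k^+$ onto $C\{j\ge k\}$. Alternatively, one can invoke the symmetry of $\CFKINFTY(Y,K)$ under interchanging $i$ and $j$. This symmetry holds for knots in any integer homology sphere (it reflects the conjugation symmetry of $\mathrm{Spin}^c$ structures on $Y$) and gives $H_k=V_{-k}$. Then $H_k\le H_{k+1}$ follows from the $V$ statement.

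\textbf{Vanishing of $V_k$.} For $k\ge g(K)$, the adjunction-type property of the knot filtration gives that $C\{i<0,\ j\ge k\}$ is acyclic, since knot Floer homology is supported in Alexander gradings $|A|\le g$. Equivalently, $A_k^+\to B^+$ is a quasi-isomorphism for $k\ge g$. This uses the fact that the top Alexander grading of $\widehat{HFK}(Y,K)$ equals $g(K)$, as recalled in Section~\ref{subsec:bordered}. Therefore $\mathfrak{v}_k^+$ is an isomorphism, so $V_k=0$. By the symmetry $H_k=V_{-k}$, it follows that $H_k=0$ for $k\le -g(K)$.

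\textbf{Main obstacle.} The one step needing care is that $v_k$ and $h_k$ are nonzero on the tower parts, so that $V_k$ and $H_k$ are well defined. This holds because both maps are isomorphisms in sufficiently high degree, where all the complexes agree with $CF^\infty(Y)\cong\F[U,U^{-1}]$. That identification relies on the $L$-space hypothesis, via $\mathfrak{B}^+\cong\mathcal{T}^+$.
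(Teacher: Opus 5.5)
Your proof is correct and is essentially the standard $S^3$ argument of Ni--Wu that the paper invokes verbatim: the projection $A_k^+\to A_{k+1}^+$ gives monotonicity of $V_k$, acyclicity of $C\{i<0,\ j\ge k\}$ for $k\ge g(K)$ gives vanishing, and the conjugation symmetry $H_k=V_{-k}$ handles $H_k$, with the $L$-space hypothesis entering only through $\mathfrak{B}^+\cong\mathcal{T}^+$. One small caution: for $H_k$ the symmetry route is the clean one, because reusing the projection $\pi$ directly only gives $h_{k+1}\circ\pi\simeq U\circ h_k$, i.e.\ $H_{k+1}\le H_k+1$; the genuine $i\leftrightarrow j$ mirror of your $V$-argument instead uses the map $U\colon A_{k+1}^+\to A_k^+$.
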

\begin{proof}
    The proof is identical to the case $Y = S^3$ {\cite[Lemma~2.4]{NiWu}}.
\end{proof}

\begin{lemma}[{\cite[Lemma~2.7]{NiWu}}]\label{lem:V0H0}
For any knot $K$ in $Y$, we have $V_0=H_0$.
\end{lemma}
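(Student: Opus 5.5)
The plan is to follow the argument of Ni--Wu for $S^3$, which rests on a symmetry of the knot Floer complex. That symmetry holds for any knot in an integer homology sphere.

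\emph{Step 1: the symmetry.} I will use the $\mathbb{Z}\oplus\mathbb{Z}$-filtered chain homotopy equivalence
\[
\CFKINFTY(Y,K) \simeq \CFKINFTY(Y,K)
\]
that interchanges the roles of $i$ and $j$. Ozsv\'ath--Szab\'o proved this for null-homologous knots in arbitrary three-manifolds, via the conjugation of Heegaard diagrams that swaps the basepoints $w$ and $z$. Since $H_1(Y;\Z)=0$, there is a unique $\Spin^c$ structure, and the Alexander grading is symmetric.

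\emph{Step 2: identify $A_0^+$ with its reflection.} Under the flip, the quotient complex
\[
A_0^+ = C\{\, i\ge 0 \text{ or } j\ge 0 \,\}
\]
is carried to itself, because the defining condition is symmetric when $s=0$. The target of $v_0$ is $C\{i\ge 0\}$ and the target of $h_0$ is $C\{j\ge 0\}$, so the flip exchanges the two projections $v_0$ and $h_0$. The remaining ingredient of $h_0$ is the identification $C\{j\ge 0\}\simeq C\{i\ge 0\}$. Both complexes compute $\HF^+(Y)\cong\mathcal{T}^+$, and the identification is the homotopy equivalence induced by moving basepoints. So we obtain a commutative square, up to homotopy, in which the flip on $A_0^+$ intertwines $v_0$ with $h_0$, followed by an equivariant self-equivalence of $B^+$.

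\emph{Step 3: pass to homology.} On homology, every $U$-equivariant graded automorphism of $\mathcal{T}^+$ is the identity. This uses that $Y$ is an $L$-space, so that $\mathfrak{B}^+\cong\mathcal{T}^+$ with nothing reduced. Hence $\mathfrak{v}_0^+$ and $\mathfrak{h}_0^+$ differ by precomposition with an automorphism of $\mathfrak{A}_0^+$. That automorphism preserves the tower part $\mathfrak{A}_0^T=U^n\mathfrak{A}_0^+$. Restricting to the tower part, both maps are therefore given by multiplication by the same power of $U$, which gives $V_0=H_0$.

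\emph{Expected obstacle.} The main point to check is that the ``chain homotopy equivalence from $C\{j\ge 0\}$ to $C\{i\ge 0\}$'' in the definition of $h_s$ is compatible with the flip symmetry. Concretely, composing the flip with this equivalence must give a map that is grading preserving on the tower, not one that shifts degree. For this I will compare absolute gradings. Both $v_0$ and $h_0$ are homogeneous of the same degree shift, since in $Y$ they correspond to the cobordism maps for large surgery in the conjugate $\Spin^c$ structures $\s$ and $\bar{\s}$. The tower of $\mathfrak{A}_0^+$ has bottom grading $d(Y_N(K),\s_0)$, and $\mathfrak{B}^+$ has bottom grading $d(Y)-$(a shift). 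The shifts for $v_0$ and $h_0$ agree because $c_1$ of the relevant $\Spin^c$ structures on the $2$-handle cobordism are negatives of each other and so have equal squares. Equal grading shifts between the same towers force $V_0=H_0$. If the conjugation argument is awkward to make precise, this grading comparison alone suffices and is the route I would actually write down.
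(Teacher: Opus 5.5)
Your argument is correct. Steps 1--3 are exactly the paper's proof: the paper invokes the conjugation symmetry $(\Sigma,\alpha,\beta,w,z)\leftrightarrow(-\Sigma,\beta,\alpha,z,w)$, which gives a filtered equivalence of $\CFKINFTY(Y,K)$ swapping $i$ and $j$, and then says the conclusion follows. Your intertwining relation $\mathfrak{h}_0^+\circ(\Phi_A)_*=\Psi_*\circ\mathfrak{v}_0^+$ unpacks that sentence.

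The worry in your last paragraph is not a real obstacle. You never need $\Psi_*$ to be graded or equal to the identity. The exponent in $\mathfrak{v}_0^+|_{\mathfrak{A}_0^T}\sim U^{V_0}$ is the dimension of the kernel on the tower, and composing on either side with $\F[U]$-automorphisms does not change that dimension. So $V_0=H_0$ follows immediately from the intertwining relation.

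The grading comparison you say you would actually write down is a genuinely different route, and it is also valid. Instead of a chain-level symmetry, it uses absolute gradings. You identify $v_0,h_0$ with the large-surgery cobordism maps in the spin$^c$ structures $\mathfrak{x}_0$ and $\mathfrak{y}_0=\overline{\mathfrak{x}_0}$. These have equal $c_1^2$, hence equal degree shift $\delta$, so both tower maps are $U^{(d_B-d_A-\delta)/2}$. What this route costs and buys:
\begin{itemize}
\item It requires the Ozsv\'ath--Szab\'o identification of $h_0$ with $F_{W,\mathfrak{y}_0}$, with the basepoint-moving equivalence included, together with the degree-shift formula.
\item In exchange, it avoids any question of how the flip interacts with that equivalence.
\end{itemize}

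In fact the cobordisms are unnecessary. In the $\CFKINFTY$ model, both $v_0$ and $h_0$ are already degree-zero maps, for two reasons:
\begin{itemize}
\item For $s=0$ there is no $U^{s}$ reindexing in $h_0$.
\item On $C\{j\ge 0\}$, the grading $M_w(x)+2i=M_z(x)+2j$ of $[x,i,j]$ is the absolute grading for the basepoint $z$, and the basepoint-moving equivalence preserves it.
\end{itemize}
Hence $V_0=H_0=(d_B-d_A)/2$ directly.
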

\begin{proof}
    Even in the general case, the conjugation symmetry of doubly pointed Heegaard diagrams $(\Sigma,\alpha,\beta,w,z) \leftrightarrow (-\Sigma,\beta,\alpha, z,w)$ provides a chain homotopy equivalence of $\CFKINFTY$ interchanging the roles of $i$ and $j$ filtrations. The conclusion follows.
\end{proof}

\begin{proposition}\label{prop:UB}

For every $i \in \Z/p\Z$,
\begin{equation}\label{eq:UB}
\din\big(Y_{p/q}(K),\, i\big) \;=\;
\din(Y) \;+\; \din\big(L(p,q),\, i\big)
\;-\; 2\max\!\left\{V_{\left\lfloor i/q \right\rfloor},\;
H_{\left\lfloor (i+p(-1))/q \right\rfloor}\right\}.
\end{equation}

\end{proposition}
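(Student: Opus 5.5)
We follow Ni–Wu's proof of the $S^3$ formula (and Ozsváth–Szabó's original argument), using the rational surgery formula of Theorem~\ref{thm:rationalsurgeryformula}. The model case is the unknot $O \subset S^3$, whose surgery is $L(p,q)$, together with $\din(S^3)=0$. Since $Y$ is an integer homology sphere $L$-space, $\mathfrak{B}^+ \cong \mathcal{T}^+$. The tower part of each $\mathfrak{A}^+_k$ is likewise a single copy of $\mathcal{T}^+$. The only differences from $S^3$ are two: the absolute gradings of all the towers are shifted by $\din(Y)$, and the numbers $V_k, H_k$ of \eqref{eq:VkHkdef} replace those of a knot in $S^3$.

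\textbf{Step 1: the mapping cone.} For $p/q>0$, we compute $H_*(\mathbb{X}^+_{i,p/q})$ by the standard truncation argument. Because $V_k=0$ for $k\ge g$ and $H_k=0$ for $k\le -g$ (Lemma~\ref{lem:monotone}), we may truncate the cone to finitely many $s$ without changing homology. Restricted to the tower parts, the map $\mathfrak{D}^+_{i,p/q}$ is a "zig-zag" of maps $U^{V}$, $U^{H}$ between copies of $\mathcal{T}^+$. Its kernel contains a single tower, and the bottom element of that tower is represented in $\mathfrak{A}^T_{\lfloor (i+ps)/q\rfloor}$ for an appropriate $s$. The standard argument then shows that the $d$-invariant equals the grading of the bottom of that tower minus $2\max\{V_{\lfloor i/q\rfloor}, H_{\lfloor (i-p)/q\rfloor}\}$. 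This is achieved at $s=0$ after using monotonicity (Lemma~\ref{lem:monotone}) to compare with other $s$, exactly as in Ni–Wu. Reduced parts of $\mathfrak{A}^+_k$ do not affect the tower, since they are $U$-torsion.

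\textbf{Step 2: absolute grading.} We determine the grading shift. The cleanest route is to run the same computation for the unknot, or more precisely a local unknot $O\subset Y$, for which $V_k=H_k=0$ for all the relevant $k$ and $Y_{p/q}(O)=Y\# L(p,q)$. There $\din(Y\#L(p,q),i)=\din(Y)+\din(L(p,q),i)$ by additivity of $d$ under connected sum. The grading shifts in the mapping cone (coming from the cobordism maps of the 2-handles) depend only on $p,q,i$ and the homology class data, not on the complex $\CFKINFTY(Y,K)$ itself. Hence they agree for $K$ and $O$. Comparing the two computations gives \eqref{eq:UB}.

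\textbf{Main obstacle.} The main obstacle is justifying this independence of the absolute grading shift from the knot. We would argue it through the grading conventions in \cite{OSzRationalSurgery}: the shift on each summand $(s,A^+_k)$ is fixed by requiring the inclusion into $CF^+$ of the large surgery to be graded. Those gradings are computed from $c_1^2$ of Spin$^c$ structures on a cobordism whose intersection form depends only on $p/q$. For the distinguished Spin$^c$ structure we would verify that $\din$ of the large surgery equals $\din(Y)+\din(L(p,1),i)-2V_i$. Finally, we would deduce the $p/q<0$ case by mirroring, using $-Y_{p/q}(K)=(-Y)_{-p/q}(m(K))$. We would also check the bookkeeping that $-Y$ is again an $L$-space with $\din(-Y)=-\din(Y)$.
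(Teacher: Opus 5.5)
Your proposal is correct and follows essentially the same route as the paper: you locate the tower in $\ker\mathfrak{D}^T_{i,p/q}$ via Ni--Wu's element $\rho(\mathbf{1})$, whose component $\xi_0$ or $\xi_{-1}$ equals $\mathbf{1}$ according to which of $V_{\lfloor i/q\rfloor}$ and $H_{\lfloor (i-p)/q\rfloor}$ is larger, with both components mapping into $(0,\mathfrak{B}^+)$. You then fix the absolute grading by comparison with the local unknot $O\subset Y$, using $Y_{p/q}(O)\cong Y\# L(p,q)$ and additivity of $d$. Your extra argument that the grading shift is knot-independent (the surgery cobordism data depend only on $p/q$) is a point the paper simply asserts, and your closing mirroring step for $p/q<0$ is unnecessary, since the proposition is the $p,q>0$ statement.
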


\begin{proof}

We adapt the proof of \cite[Proposition~1.6]{NiWu} to the case of an integer homology sphere $L$-space $Y$. Firstly, we establish surjectivity of $\mathfrak{D}^T_{i,p/q}$. Let $\mathfrak{A}_k^T := U^n \mathfrak{A}_k^+$ for $n \gg 0$ and
$\mathfrak{D}^T_{i,p/q}$ the restriction of
$\mathfrak{D}^+_{i,p/q}$ to
$\mathcal{A}_i^T := \bigoplus_s (s, \mathfrak{A}^T_{\lfloor(i+ps)/q\rfloor})$.
This map is surjective: the proof is again identical to
\cite[Lemma~2.8]{NiWu}. We construct a preimage $\xi = \{(s,\xi_s) \}$ of $\eta = \{(s,\eta_s)\}$ by setting
$\xi_{-1} = U^{-H_{\lfloor(i+p(-1))/q\rfloor}} \eta_0$, $\xi_0 = 0$,
and extending to rest of the values of $s$ as described in \cite[Lemma~2.8]{NiWu}.

\medskip
Now, we focus on the tower in $\ker \mathfrak{D}^T_{i,p/q}$. Since $\mathfrak{D}^T_{i,p/q}$ is surjective,
$\mathfrak{D}^+_{i,p/q}$ is also surjective.  By the rational surgery formula
\ref{thm:rationalsurgeryformula}, $\HF^+(Y_{p/q}(K),i) \cong \ker \mathfrak{D}^+_{i,p/q}$.
As in \cite[Lemma~2.9]{NiWu}, $U^n \HF^+(Y_{p/q}(K),i)$ for $n \gg 0$
is identified with a subgroup of $\ker \mathfrak{D}^T_{i,p/q}$.
The $d$-invariant equals the minimal grading in $\ker \mathfrak{D}^T_{i,p/q}$ of the tower generator.
\medskip

Given $\xi \in \mathcal{T}^+$, define
$\rho(\xi) = \{(s,\xi_s)\}_{s \in \Z}$ exactly as in \cite{NiWu}:

\smallskip\noindent

If \[V_{\lfloor i/q \rfloor} \geq H_{\lfloor (i+p(-1))/q \rfloor}\]
Set \[\xi_0 = \xi, \;\
\xi_{-1} = U^{V_{\lfloor i/q\rfloor} - H_{\lfloor(i+p(-1))/q\rfloor}} \xi.\]

\smallskip\noindent
If
\[V_{\lfloor i/q \rfloor} < H_{\lfloor (i+p(-1))/q \rfloor}\]
Set \[\xi_{-1} = \xi, \;\
\xi_0 = U^{H_{\lfloor(i+p(-1))/q\rfloor} - V_{\lfloor i/q \rfloor}} \xi.\]

\smallskip\noindent
\[ \xi_s = 
\begin{cases}  
U^{H_{\lfloor(i+p(s-1))/q\rfloor} - V_{\lfloor(i+ps)/q\rfloor}}\, \xi_{s-1}, & s>0\\

U^{V_{\lfloor(i+p(s+1))/q\rfloor} - H_{\lfloor(i+ps)/q\rfloor}}\, \xi_{s+1}, & s<-1
    
\end{cases}  \]

\smallskip
By \cite[Lemma~2.8]{NiWu}),
$\xi_s = 0$ for $|s| \gg 0$, and it can be directly verified that
$\mathfrak{D}^T_{i,p/q}(\rho(\xi)) = 0$ and $U\rho(\mathbf{1}) = 0$. Hence, the absolute grading of $\rho(\mathbf{1})$ is $\din(Y_{p/q}(K),i)$.

\smallskip

The absolute grading on $\mathbb{B}_i^+$ is chosen so that for the local unknot $O \subset Y$, the grading of $\mathbf{1} \in H_*(\mathbb{X}^+_{i,p/q}(O))
\cong \mathcal{T}^+$ equals $\din(Y) + \din(L(p,q),i)$. When $Y = S^3$, this
reduces to $\din(L(p,q),i)$ as in \cite[Remark~2.3]{NiWu}. This normalization follows from the connected sum formula
$Y_{p/q}(O) \cong Y \# L(p,q)$ and the additivity of $d$-invariants.
The grading of $\rho(\mathbf{1})$ is $\din(Y_{p/q}(O),i)=\din(Y) + \din(L(p,q),i)$ implies the grading of $\mathbf{1} $ in $(0,\mathfrak{B}^+)$ is also $\din(Y) + \din(L(p,q),i)$ as $V_k,H_k=0 \;\ \forall \;\ k\geq 0$ for the local unknot. We consider two possible scenarios.
\begin{enumerate}
    \item Assume $V_{\lfloor i/q \rfloor} \geq H_{\lfloor (i+p(-1))/q \rfloor}$. The component $\xi_0 = \mathbf{1} \in (0, \mathfrak{A}^T_{\lfloor i/q\rfloor})$
maps via $\mathfrak{v}^+_{\lfloor i/q\rfloor}$ to
$U^{V_{\lfloor i/q\rfloor}} \cdot \mathbf{1} \in (0, \mathfrak{B}^+)$.
We obtain the grading
of $\rho(\mathbf{1})$ by shifting the grading of  $\mathbf{1} $ in $\mathbb{B}^+$  down by $2V_{\lfloor i/q\rfloor}$:
\[
\din(Y_{p/q}(K),i) = \din(Y) + \din(L(p,q),i) - 2V_{\lfloor i/q\rfloor}.
\]

\item Assume $V_{\lfloor i/q \rfloor} < H_{\lfloor (i+p(-1))/q \rfloor}$. The component $\xi_{-1} = \mathbf{1}$ maps via
$\mathfrak{h}^+_{\lfloor(i+p(-1))/q\rfloor}$ to
$U^{H_{\lfloor(i+p(-1))/q\rfloor}} \cdot \mathbf{1} \in (0, \mathfrak{B}^+)$,
giving
\[
\din(Y_{p/q}(K),i) = \din(Y) + \din(L(p,q),i) - 2H_{\lfloor(i+p(-1))/q\rfloor}.
\]

\end{enumerate}
Combining both cases yields \eqref{eq:UB}.
 
\end{proof}

\begin{corollary}\label{cor:UB}

\begin{equation}\label{eq:UBineq}
\din\big(Y_{p/q}(K),\, i\big) \;\leq\;
\din(Y) \;+\; \din\big(L(p,q),\, i\big),
\end{equation}

for every $0 \leq i \leq p-1$.  Moreover: Equality holds for all $i$ if and only if $V_k = 0$ for all $k \geq 0$
and $H_k = 0$ for all $k \leq -1$; equivalently, $V_0 = 0$ and $H_0 = 0$.

\end{corollary}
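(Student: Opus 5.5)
The plan is to read the inequality directly off Proposition~\ref{prop:UB}. Since $V_k,H_k\geq 0$ by their definition as exponents of $U$, the correction term $-2\max\{V_{\lfloor i/q\rfloor},H_{\lfloor (i-p)/q\rfloor}\}$ is nonpositive. Hence \eqref{eq:UBineq} holds for every $i$, and equality at a given $i$ holds exactly when $V_{\lfloor i/q\rfloor}=0$ and $H_{\lfloor (i-p)/q\rfloor}=0$.

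For the characterization, I would note which indices occur as $i$ ranges over $0\le i\le p-1$. The quantity $\lfloor i/q\rfloor$ ranges over $\{0,\dots,\lfloor (p-1)/q\rfloor\}$ and always includes $0$ (take $i=0$). The quantity $\lfloor (i-p)/q\rfloor$ ranges over negative integers and includes $-1$ (take $i=p-1$, which gives $\lfloor -1/q\rfloor=-1$).

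Equality for all $i$ therefore forces $V_0=0$ and $H_{-1}=0$. By the monotonicity in Lemma~\ref{lem:monotone}, $V_0=0$ gives $V_k\le V_0=0$ for all $k\ge 0$, and $H_{-1}=0$ gives $H_k\le H_{-1}=0$ for all $k\le -1$. Conversely, if $V_k=0$ for all $k\geq 0$ and $H_k=0$ for all $k\leq -1$, then every index appearing in the formula lies in one of these ranges, so equality holds for all $i$.

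It remains to show that this condition is equivalent to $V_0=0$ and $H_0=0$. By Lemma~\ref{lem:V0H0}, $H_0=V_0$. If $V_0=0$, then $V_k=0$ for $k\ge0$ by monotonicity. Also $H_0=0$, and since $H$ is nondecreasing and nonnegative, $H_k\le H_0=0$ for $k\le -1$. Conversely, $V_k=0$ for $k\ge 0$ includes $V_0=0$, and then $H_0=V_0=0$.

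The only point needing care is that the extremal indices $0$ and $-1$ are actually attained for every coprime $p,q>0$, including the case $q>p$. The computations above cover this: $i=0$ gives index $0$, and $i=p-1$ gives $\lfloor -1/q\rfloor=-1$. I expect no real obstacle beyond this bookkeeping.
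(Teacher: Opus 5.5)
Your proposal is correct and follows essentially the same route as the paper. Nonnegativity of $V_k,H_k$ in Proposition~\ref{prop:UB} gives the inequality, and the equality case reduces via the index ranges $\{0,\dots,\lfloor (p-1)/q\rfloor\}$ and $\{\lfloor -p/q\rfloor,\dots,-1\}$ to $V_0=0$ and $H_{-1}=0$, using Lemma~\ref{lem:monotone} and $V_0=H_0$ from Lemma~\ref{lem:V0H0}; your explicit check that the indices $0$ and $-1$ are attained at $i=0$ and $i=p-1$ is just a slightly more careful version of the paper's statement of those ranges.
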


\begin{proof}
    
Since $V_s, H_s \geq 0$, the maximum in formula \eqref{eq:UB} is
non-negative. Hence, we get the inequality \eqref{eq:UBineq}.

As $i$ ranges over $\{0, \ldots, p-1\}$, the index $\lfloor i/q \rfloor$ ranges
over $\{0, 1, \ldots, \lfloor (p-1)/q \rfloor\}$, while
$\lfloor (i - p)/q \rfloor$ ranges over
$\{\lfloor -p/q \rfloor, \ldots, -1\}$.
Therefore, equality for all $i$ is equivalent to
\[
V_k = 0 \;\text{ for } 0 \leq k \leq \lfloor(p-1)/q\rfloor, \qquad
H_k = 0 \;\text{ for } \lfloor -p/q \rfloor \leq k \leq -1.
\]
By Lemma~\ref{lem:monotone}, the first condition is equivalent
to $V_0 = 0$ (which forces $H_0=0$), and the second is
equivalent to $H_{-1} = 0$.
This proves the claim.

\end{proof}
\begin{proposition}\label{prop:V0H0tau}
Let $K$ be a knot in an integer homology $L$-space $Y$. If $V_0 = 0$, then $\tau_Y(K) = 0$.
\end{proposition}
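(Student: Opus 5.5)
We prove the two inequalities $\tau_Y(K)\le 0$ and $\tau_Y(K)\ge 0$ separately. The first uses the tower part of $A_0^+$ together with the definition of $\tau_Y$ via the filtration $\mathcal{F}_Y(K,m)$. The second follows by applying the first to the mirror $m(K)\subset -Y$ and using $\tau_Y(K)=-\tau_{-Y}(m(K))$. The argument follows \cite{NiWu} (where $V_0=0\Rightarrow \tau\le 0$ for knots in $S^3$, via Rasmussen's local $h$-invariants). The only new ingredient is that $\widehat{HF}(Y)\cong\F$ and $HF^+(Y)\cong\mathcal{T}^+$, so $B^+$, $\widehat{CF}(Y)$ and the relevant quotients behave exactly as for $S^3$, up to an overall grading shift by $d(Y)$.

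\textbf{Step 1 ($V_0=0\Rightarrow \tau_Y(K)\le 0$).} Consider the short exact sequence
\[
0\to C\{i<0,\ j\ge 0\}\to A_0^+ \xrightarrow{v_0} B^+=C\{i\ge 0\}\to 0 .
\]
The condition $V_0=0$ says that $\mathfrak{v}_0^+$ maps the tower of $\mathfrak{A}_0^+$ onto $\mathcal{T}^+$ without any $U$-shift. In particular the bottom generator of $\mathcal{T}^+=HF^+(Y)$, in grading $d(Y)$, lifts to a class in $H_*(A_0^+)$. Restricting to $U$-kernels, equivalently passing to the hat versions, this means the following. The element generating $\widehat{HF}(Y)=H_*(C\{i=0\})$ is represented by a cycle that is homologous, inside $C\{i=0\}$ modulo the part with $j\ge 1$, to a cycle supported in $\{i=0,\ j\le 0\}$. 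Equivalently, $\iota_K^0:\mathcal{F}_Y(K,0)\to \widehat{CF}(Y)$ is nonzero on homology. By the definition of $\tau_Y$ this gives $\tau_Y(K)\le 0$.

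I expect making this translation precise to be the main obstacle. The plan is to use the standard identity from \cite{NiWu} (originally Rasmussen). Writing $\mathfrak{v}_0^+$ restricted to the tower as $U^{V_0}$, one has $V_0=0$ if and only if the inclusion $C\{i\le 0\ \text{or}\ j\le 0\}$-type quotient map is nontrivial on the bottom of the tower, and this is equivalent to the hat statement about $\mathcal{F}_Y(K,0)$. The only input about $Y$ this needs is that $\widehat{HF}(Y)$ is one-dimensional and is the $U$-kernel of the tower $HF^+(Y)$, which holds since $Y$ is an $L$-space. Concretely, I would first show that $V_0=0$ implies $V_0$ for the hat-level map $C\{\max(i,j-0)= 0\}\to C\{i=0\}$ is surjective on homology, then compose with the inclusion $C\{i=0,\ j\le 0\}\hookrightarrow C\{\max(i,j)=0\}$.

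\textbf{Step 2 (reverse inequality).} Lemma~\ref{lem:V0H0} gives $H_0=V_0=0$. Under the conjugation symmetry used in that lemma, together with the duality $\CFKINFTY(-Y,m(K))\simeq \CFKINFTY(Y,K)^*$, the vanishing of $H_0$ (equivalently of $V_0$) for $K$ should correspond to the vanishing of $V_0$ for $m(K)\subset -Y$. One checks this using that $-Y$ is again an integer homology sphere $L$-space, with $d(-Y)=-d(Y)$. An alternative route, if the duality bookkeeping is awkward, is to run Step 1 with $H_0$ and the subcomplex $C\{i\le 0,\ j=0\}$ in place of $V_0$, obtaining a dual nonvanishing statement that forces $\tau_Y(K)\ge 0$ directly. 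Step 1 applied to $m(K)\subset -Y$ gives $\tau_{-Y}(m(K))\le 0$, hence $\tau_Y(K)=-\tau_{-Y}(m(K))\ge 0$. Combined with Step 1, $\tau_Y(K)=0$.
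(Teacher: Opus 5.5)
\textbf{There is a genuine gap in Step~2, and it cannot be closed: the proposition as stated is false.}

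Your Step~1 matches the first half of the paper's proof, and its conclusion $\tau_Y(K)\le 0$ is correct. The bottom tower class of $H_*(A_0^+)$ lies in $\ker U$, so it lifts to $H_*(\widehat{A}_0)$. Then use that $\HFhat(Y)\hookrightarrow HF^+(Y)$ is injective. However, your concrete factorization is backwards. $C\{i=0,\ j\le 0\}$ is not a subcomplex of $\widehat{A}_0=C\{\max(i,j)=0\}$. It is the quotient of $\widehat{A}_0$ by the subcomplex $C\{i<0,\ j=0\}$, since the differential of a generator at $(0,0)$ may hit $(i',0)$ with $i'<0$. The correct statement is $\hat{v}_0=\iota^0_K\circ p_0$, with $p_0$ the quotient map. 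Then $(\hat v_0)_*\neq 0$ forces $(\iota^0_K)_*\neq 0$.

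Neither of your two routes in Step~2 gives $\tau_Y(K)\ge 0$.

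\emph{The $H_0$ route.} The conjugation symmetry behind $H_0=V_0$ is a self-symmetry of $\CFKINFTY(Y,K)$ that swaps $i$ and $j$. Running Step~1 with $h_0$ shows that $C\{i\le 0,\ j=0\}\to C\{j=0\}$ is nonzero on homology. By that same symmetry this is again $\tau_Y(K)\le 0$, not $\tau_Y(K)\ge 0$.

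\emph{The mirror route.} $V_0(K)=0$ does not imply $V_0(m(K))=0$. By Proposition~\ref{prop:UB}, together with $Y_{-1}(K)\cong-(-Y)_1(m(K))$, one has $\din(Y_1(K))=\din(Y)-2V_0(K)$ and $\din(Y_{-1}(K))=\din(Y)+2V_0(m(K))$, and these two quantities are independent. A counterexample is the left-handed trefoil $T_{2,-3}\subset S^3$:
\begin{itemize}
\item $S^3_1(T_{2,-3})\cong\pm\Sigma(2,3,7)$ has $\din=0$, so $V_0(T_{2,-3})=0$;
\item $S^3_{-1}(T_{2,-3})$ is the Poincar\'e sphere with $\din=2$, so $V_0(T_{2,3})=1$;
\item $\tau(T_{2,-3})=-1\neq 0$.
\end{itemize}
The paper's proof makes exactly the same unjustified claim (``$V_0(Y,K)=0$ implies $V_0(-Y,m(K))=0$''), so it shares this error.

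What is true is that $V_0(K)=0$ implies $\tau_Y(K)\le 0$. Applying this to $K$ and to $m(K)\subset -Y$, if $V_0(K)=V_0(m(K))=0$ then $\tau_Y(K)=0$. This weaker statement suffices for Theorem~\ref{tauresult}. In the proof of Proposition~\ref{cosmeticconstraint}, the lower bound on $\din(Z,\mathfrak{s})$ comes from positive $p/q_2$ surgery on $m(K)\subset -Y$, since $-Z\cong(-Y)_{p/q_2}(m(K))$. The Casson--Walker squeeze forces this lower bound to be an equality for every $\mathfrak{s}$, just as it does the upper bound. Corollary~\ref{cor:UB} applied to $m(K)\subset -Y$ then gives $V_0(m(K))=0$. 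This is how Ni--Wu argue.
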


\begin{proof}
First, we show $V_0 = 0$ implies $\tau_Y(K) \leq 0$. Consider the following commutative diagram 
\[
\begin{tikzcd}
H_*(\widehat{A}_0) \ar[r, "(i_A)_*"] \ar[d, "(\hat{v}_0)_*"']
& H_*(A_0^+) \ar[d, "(v_0^+)_*"] \\
\widehat{HF}(Y) \ar[r, hookrightarrow, "(i_B)_*"']
& HF^+(Y)
\end{tikzcd}
\]
Here, $\widehat{A}_0 = C\{ \max \{ i,j \}=0\}$, $i_A$ and $i_B$ are inclusion maps, and $\hat{v}_0$ and $v_0^+$ are projection maps. Since $Y$ is an $L$-space,
$\widehat{HF}(Y) \cong \mathbb{F}$, and $\mathfrak{B}^+ = H_*(B^+) =\mathcal{T}^+ \cong HF^+(Y)$.   

Since $V_0 = 0$, we have $(v_0^+)_*(\mathbf{1}) = \mathbf{1}$
in $\mathcal{T}^+ \cong HF^+(Y)$. There is  a generator $\alpha$ in $H_*(\widehat{A}_0)$ such that  $(i_A)_*(\alpha)= \mathbf{1}$
in $\mathcal{T}^+$. Commutativity gives
\[
(i_B)_* \circ (\hat{v}_0)_*(\alpha)
\;=\; (v_0^+)_* \circ (i_A)_*(\alpha)
\;=\; (v_0^+)_*(\mathbf{1})
\;=\; \mathbf{1} \;\neq\; 0,
\]
so $(\hat{v}_0)_*(\alpha) \neq 0$ since $(i_B)_*$ is injective. Now $\widehat{A}_0 = C\{i = 0,\; j \leq 0\} \cup C\{i < 0,\; j = 0\}$,
and $\hat{v}_0$ projects onto $C\{i = 0\}$, killing the $\{i < 0\}$ part.
So, $(\hat{v}_0)$ is the composition  of $p_0:\widehat{A}_0 \to  C\{i = 0,\; j \leq 0\}$ and $(\iota_K^0)_*:C\{i = 0,\; j \leq 0\} \to C\{i = 0\} $. Therefore,  nontriviality of $(\hat{v}_0)_*$ implies nontriviality of $(\iota_K^0)_*$. Hence,
$\tau_Y(K) \leq 0$.

\smallskip

Now, $\CFKINFTY(-Y,m(K))$ is filtered isomorphic to the dual of $\CFKINFTY(Y,K)$. It follows using duality that $V_0(Y,K)=0$ implies $V_0(-Y,m(K))=0$. Therefore, $\tau_{-Y}(m(K)) \leq 0$ or $\tau_Y(K) \geq 0$. Hence, the conclusion follows.

\end{proof}

\subsection*{Constraints on $d$-invariant  from purely cosmetic surgery}
Recall that, we have the following relation between $\din$ invariants and Casson-Walker invariant-
\[ \left|H_1(Y ; \mathbb{Z})\right| \lambda(Y)=\sum_{\mathfrak{s} \in \operatorname{Spin}^c(Y)}\left(\chi\left(H F_{\mathrm{red}}(Y, \mathfrak{s})\right)-\frac{1}{2} \din (Y, \mathfrak{s})\right).\]

In particular, when $Y$ is an integer homology sphere $L$-space, we have  $\lambda(Y)= -\frac{1}{2} \din(Y)$.\\

We also have \[\lambda(L(p,q))= \sum_{\mathfrak{s} \in \operatorname{Spin}^c(Y)}- \frac{1}{2}  \din (L(p,q), \mathfrak{s}). \]

\begin{proposition}\label{cosmeticconstraint}
    
Let $K$ be a knot in an integer homology $L$-space $Y$ with purely
cosmetic surgery $Z = Y_{p/q_1}(K) \cong Y_{-p/q_2}(K)$,
where $p > 0$ and $q_1, q_2 > 0$.  Then:
\begin{enumerate}
\item $\Delta_K''(1) = 0$.
\item $\displaystyle\sum_{\mathfrak{s} \in \Spin^c(Z)}
\chi(\HF_{\mathrm{red}}(Z,\mathfrak{s})) = \frac{p-1}{2}\,\din(Y)$.
\item $\din(Z,\mathfrak{s}) = \din(Y) + \din(L(p,q_1),\mathfrak{s})$
for each $\mathfrak{s} \in \Spin^c(Z)$.
\end{enumerate}

\end{proposition}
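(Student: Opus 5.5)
The plan is to follow Ni--Wu's proof of the analogous statement in $S^3$. Part (1) will come from the Casson--Walker and Casson--Gordon surgery formulas. Part (3) will come from the two-sided $d$-invariant bounds of Corollary~\ref{cor:UB}, applied once to $K\subset Y$ and once to the mirror $m(K)\subset -Y$. Part (2) is then bookkeeping with the relation between $\lambda$ and the $d$-invariants.

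\emph{Step 1 ($\Delta_K''(1)=0$).} Apply the Casson--Gordon surgery formula to both descriptions of $Z$. Since the correction term $\sigma(K,p)$ depends only on $K$ and $p$, it cancels, giving $\sigma^{\mathrm{CG}}(L(p,q_1))=\sigma^{\mathrm{CG}}(L(p,-q_2))$. By the remark after that formula, this forces $\lambda(L(p,q_1))=\lambda(L(p,-q_2))$, since both are governed by Dedekind sums. Next equate the two Casson--Walker surgery formulas for $\lambda(Z)$:
\[
\lambda(Y)+\lambda(L(p,q_1))+\tfrac{q_1}{2p}\Delta_K''(1)=\lambda(Y)+\lambda(L(p,-q_2))-\tfrac{q_2}{2p}\Delta_K''(1).
\]
This gives $(q_1+q_2)\Delta_K''(1)=0$, and since $q_1+q_2>0$, we get $\Delta_K''(1)=0$.

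\emph{Step 2 (two-sided bounds on $d$).} Corollary~\ref{cor:UB} gives the upper bound $\din(Z,i)\le \din(Y)+\din(L(p,q_1),i)$. For the lower bound, write $Y_{-p/q_2}(K)=-\bigl((-Y)_{p/q_2}(m(K))\bigr)$. Here $-Y$ is again an integer homology sphere $L$-space with $\din(-Y)=-\din(Y)$, so Corollary~\ref{cor:UB} applies to $m(K)\subset -Y$. Using $\din(-M,\mathfrak{s})=-\din(M,\mathfrak{s})$ and $-L(p,q_2)=L(p,-q_2)$, this yields
\[
\din(Z,\mathfrak{s}')\ge \din(Y)+\din(L(p,-q_2),\mathfrak{s}')
\]
in the second labelling of $\Spin^c(Z)$. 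The two labellings are related by the homeomorphism in some unknown way, so I will not compare termwise. Instead I sum over all $\Spin^c$ structures:
\[
p\,\din(Y)+\sum_{\mathfrak{s}}\din(L(p,-q_2),\mathfrak{s})\;\le\;\sum_{\mathfrak{s}}\din(Z,\mathfrak{s})\;\le\; p\,\din(Y)+\sum_{\mathfrak{s}}\din(L(p,q_1),\mathfrak{s}).
\]
Lens spaces are $L$-spaces, so $\sum_{\mathfrak{s}}\din(L(p,q),\mathfrak{s})$ is a fixed multiple of $\lambda(L(p,q))$. By Step~1 the two outer sums therefore agree. Hence every inequality is an equality, and in particular each termwise upper bound from Corollary~\ref{cor:UB} is sharp. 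This proves (3).

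\emph{Step 3 (the $\HF_{\mathrm{red}}$ sum).} Insert (3) and $\Delta_K''(1)=0$ into the relation $|H_1(Z)|\lambda(Z)=\sum_{\mathfrak{s}}\bigl(\chi(\HF_{\mathrm{red}}(Z,\mathfrak{s}))-\tfrac12\din(Z,\mathfrak{s})\bigr)$. Use $|H_1(Z)|=p$, $\lambda(Z)=\lambda(Y)+\lambda(L(p,q_1))$, $\lambda(Y)=-\tfrac12\din(Y)$, and the lens-space relation between $\lambda(L(p,q_1))$ and $\sum_{\mathfrak{s}}\din(L(p,q_1),\mathfrak{s})$. Then solve for $\sum_{\mathfrak{s}}\chi(\HF_{\mathrm{red}}(Z,\mathfrak{s}))$.

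I expect this last step to be the main obstacle. The answer depends delicately on how the $|H_1|$ factor and the normalization of $\lambda$ are set for $L(p,q)$ versus $Z$. The $\din(Y)$ contributions enter once through $\lambda(Y)$, multiplied by $p$, and $p$ times through the $d$-invariant sum. Getting the stated coefficient $\frac{p-1}{2}$ requires fixing these conventions consistently, and I would check the normalization on the local unknot, where $Z=Y\# L(p,q_1)$ and $\HF_{\mathrm{red}}=0$. A secondary point to check in Step~2 is that the orientation reversal in the mirror identifies the spin$^c$ labels of $L(p,q_2)$ and $L(p,-q_2)$ compatibly. This matters little, because only the sums are compared.
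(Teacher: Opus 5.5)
Your Steps 1 and 2 are correct. Step 1 is the paper's argument.

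In Step 2 you squeeze $\sum_{\mathfrak s}\din(Z,\mathfrak s)$ directly between the two lens-space sums. The paper instead squeezes $\sum_{\mathfrak s}\chi(\HF_{\mathrm{red}}(Z,\mathfrak s))$ through the Casson--Walker relation and reads off (3) from the resulting equality. Your version isolates the one input actually needed, $\sum_{\mathfrak s}\din(L(p,q_1),\mathfrak s)=\sum_{\mathfrak s}\din(L(p,-q_2),\mathfrak s)$. It also handles the mirror correctly via $\din(-Y)=-\din(Y)$, whereas the paper's displayed lower bound has $-\din(Y)$ at that point.

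The gap is Step 3, and it cannot be closed: no consistent choice of conventions produces $\frac{p-1}{2}\din(Y)$. Carry out your own bookkeeping. The Ozsv\'ath--Szab\'o relation for the $L$-space $L(p,q)$ gives $p\,\lambda(L(p,q))=-\tfrac12\sum_{\mathfrak s}\din(L(p,q),\mathfrak s)$. Hence, by Walker's formula and (1),
\[
|H_1(Z)|\,\lambda(Z)=p\,\lambda(Y)+p\,\lambda(L(p,q_1))=-\tfrac p2\,\din(Y)-\tfrac12\sum_{\mathfrak s}\din(L(p,q_1),\mathfrak s).
\]
Meanwhile (3) gives $\tfrac12\sum_{\mathfrak s}\din(Z,\mathfrak s)=\tfrac p2\,\din(Y)+\tfrac12\sum_{\mathfrak s}\din(L(p,q_1),\mathfrak s)$. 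Adding the two, $\sum_{\mathfrak s}\chi(\HF_{\mathrm{red}}(Z,\mathfrak s))=0$.

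The paper reaches $\frac{p-1}{2}\din(Y)$ through a normalization slip. It writes $\lambda(Z)$ where $|H_1(Z)|\lambda(Z)$ belongs, and $\lambda(L(p,q))=-\frac12\sum_{\mathfrak s}\din(L(p,q),\mathfrak s)$ where $p\,\lambda(L(p,q))$ belongs. At the same time it keeps $\lambda(Y)=-\frac12\din(Y)$ with coefficient $1$. This is exactly the once-versus-$p$-times mismatch you suspected.

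The local-unknot check you propose actually refutes item (2) as stated. Take $Y$ the Poincar\'e sphere ($\din(Y)=\pm2$) and $K$ the unknot. Then $Y_{2}(K)=Y\#\mathbb{RP}^3\cong Y_{-2}(K)$, since $\mathbb{RP}^3$ is amphichiral, and this is an $L$-space. So the left side of (2) is $0$, while $\frac{p-1}{2}\din(Y)=\pm1$. Neither the hypotheses nor the paper's proof exclude the unknot.

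What your method proves is $\sum_{\mathfrak s}\chi(\HF_{\mathrm{red}}(Z,\mathfrak s))=0$. This agrees with the stated value only when $p=1$ or $\din(Y)=0$, e.g.\ Ni--Wu's case $Y=S^3$. Item (2) is not used elsewhere in the paper; only (1) and (3) are used downstream.
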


\begin{proof}
   We have two equations for the Casson-Walker and Casson-Gordon invariants of \(Z\):
\[
\begin{aligned}
\lambda(Z)
&= \lambda(Y)+\lambda\bigl(L(p,q_1)\bigr)+\frac{q_1}{2p}\Delta_K''(1) \\
&= \lambda(Y)+\lambda\bigl(L(p,-q_2)\bigr)-\frac{q_2}{2p}\Delta_K''(1), \\
\sigma^{\mathrm{CG}}(Z)
&= \sigma^{\mathrm{CG}}\bigl(L(p,q_1)\bigr)-\sigma(K,p) \\
&= \sigma^{\mathrm{CG}}\bigl(L(p,-q_2)\bigr)-\sigma(K,p).
\end{aligned}
\]

Using the second equation, we obtain
\[
\sigma^{\mathrm{CG}}\bigl(L(p,q_1)\bigr)=\sigma^{\mathrm{CG}}\bigl(L(p,-q_2)\bigr),
\]
which implies
\[
\lambda\bigl(L(p,q_1)\bigr)=\lambda\bigl(L(p,-q_2)\bigr).
\]
Hence, we obtain
\[
\Delta_K''(1)=0.
\]
As a consequence,
\[
\lambda(Z)=\lambda(Y)+\lambda\bigl(L(p,q_1)\bigr)=\lambda(Y)+\lambda\bigl(L(p,-q_2)\bigr).
\]

Now,
\[
\begin{aligned}
\sum_{\mathfrak{s} \in \Spin^c(Z)}
\left(
\chi\bigl(\HF_{\mathrm{red}}(Z,\mathfrak{s})\bigr)
-\frac{1}{2}d(Z,\mathfrak{s})
\right)
= \lambda(Z) 
= -\frac{1}{2}\din(Y)
+ \sum_{\mathfrak{s} \in \Spin^c\!\bigl(L(p,q_1)\bigr)}
\left(-\frac{1}{2}d\bigl(L(p,q_1),\mathfrak{s}\bigr)\right).
\end{aligned}
\]

From Proposition~\ref{prop:UB},
\[
\din\bigl(Z,\mathfrak{s}\bigr)
\le \din(Y)+\din\bigl(L(p,q_1),\mathfrak{s}\bigr).
\]
Therefore, we get
\[
\sum_{\mathfrak{s} \in \Spin^c(Z)}
\chi\bigl(\HF_{\mathrm{red}}(Z,\mathfrak{s})\bigr)
\le \frac{p-1}{2}\,\din(Y).
\]

Now,
\[
\din(Z,\mathfrak{s})
= -\din\bigl(Y_{m(K)}(p/q_2),\mathfrak{s}\bigr)
\ge -\din(Y)-\din\bigl(L(p,q_2),\mathfrak{s}\bigr),
\]
and
\[
\begin{aligned}
\sum_{\mathfrak{s} \in \Spin^c(Z)}
\left(
\chi\bigl(\HF_{\mathrm{red}}(Z,\mathfrak{s})\bigr)
-\frac{1}{2}d(Z,\mathfrak{s})
\right)
= \lambda(Z) 
= -\frac{1}{2}\din(Y)
+ \sum_{\mathfrak{s} \in \Spin^c\!\bigl(L(p,-q_2)\bigr)}
\left(-\frac{1}{2}d\bigl(L(p,-q_2),\mathfrak{s}\bigr)\right).
\end{aligned}
\]

Therefore, we obtain
\[
\sum_{\mathfrak{s} \in \Spin^c(Z)}
\chi\bigl(\HF_{\mathrm{red}}(Z,\mathfrak{s})\bigr)
\ge \frac{p-1}{2}\,\din(Y).
\]

Hence, it follows that
\[
\sum_{\mathfrak{s} \in \Spin^c(Z)}
\chi\bigl(\HF_{\mathrm{red}}(Z,\mathfrak{s})\bigr)
= \frac{p-1}{2}\,\din(Y),
\]
and
\[
\din(Z,\mathfrak{s})
= \din(Y)+\din\bigl(L(p,q_1),\mathfrak{s}\bigr)
\]
for each \(\mathfrak{s} \in \Spin^c(Z)\).

\end{proof}

\begin{proof}[Proof of Theorem \ref{tauresult}] Suppose $K \subset Y$ admits truly cosmetic surgeries with slopes $r_1 \neq r_2$.
By Theorem~\ref{thm:Wu}, the slopes must have opposite signs.
By Theorem~\ref{thm:NiWu}, after possibly replacing $K$ by $m(K) \subset -Y$
(which preserves the hypothesis since $-Y$ is also an integer homology $L$-space),
the surgery is purely cosmetic:
$Z = Y_{p/q_1}(K) \cong Y_{-p/q_2}(K)$ with $p > 0$, $q_1, q_2 > 0$.

By Proposition~\ref{cosmeticconstraint}(3),
\begin{equation}\label{eq:dinZeq}
\din(Z,\mathfrak{s}) = \din(Y) + \din(L(p,q_1),\mathfrak{s})
\end{equation}
for all $\mathfrak{s}$.  That is, equality holds in \eqref{eq:UBineq} for all
$\Spin^c$ structures when applied to the positive surgery $Z = Y_{p/q_1}(K)$.
By Corollary~\ref{cor:UB}, this gives $V_0 = 0$. Hence $\tau_Y(K)= 0$ by Lemma~\ref{lem:V0H0} and Proposition~\ref{prop:V0H0tau}. 

\end{proof}

\begin{corollary}\label{taucosmeticcontraint}
Let $(Y,\xi)$ be a contact  integer homology sphere $L$-space with nontrivial Ozsv\'{a}th-Szab\'{o} contact invariant. If a Legendrian knot  $L$ in $(Y,\xi)$ admits a cosmetic contact surgery, then

\[
\tb(L)\;+\;\bigl|\rot(L)\bigr|
\;\le\;
-\;1.
\] 
\end{corollary}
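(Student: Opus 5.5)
The plan is to reduce the contact statement to the smooth one. A contact cosmetic surgery yields a contactomorphism between two contact surgeries on $L$ with distinct smooth coefficients. Any contactomorphism is an orientation-preserving diffeomorphism, so the underlying smooth manifolds satisfy $Y_{r_1}(K)\cong Y_{r_2}(K)$ with $r_1\neq r_2$. Here $K$ is the smooth knot type of $L$, and the smooth coefficients are $r_i=c_i+\tb(L)$ for the two contact coefficients $c_i$. Thus $K$ admits a truly cosmetic surgery pair in $Y$.

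The one point to check is that $K$ is non-trivial, so that the smooth theory applies. If $K$ bounds a disk, truly cosmetic surgeries can exist. However, the unknot has $\tau_Y=0$, and the inequality we want is still true there by the classical bound $\tb+|\rot|\le -1$ for unknots in tight manifolds. I will verify that this bound holds in our setting: Corollary~\ref{corhedden} with $\tau_Y(\text{unknot})=0$ gives it directly. So no case distinction is actually needed.

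Next I apply Theorem~\ref{tauresult}, since $Y$ is an integer homology sphere $L$-space. It gives $\tau_Y(K)=0$. That theorem already handles the passage to the mirror $m(K)\subset -Y$ internally, and $\tau$ vanishes for $K$ if and only if it vanishes for $m(K)$ by $\tau_Y(K)=-\tau_{-Y}(m(K))$. So the conclusion is stated for $K\subset Y$ itself.

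Finally, $(Y,\xi)$ has nontrivial contact invariant, so Corollary~\ref{corhedden} applies to the Legendrian representative $L$ of $K$:
\[
\tb(L)+\bigl|\rot(L)\bigr|\le 2\tau_Y(K)-1 = -1 .
\]

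I do not expect a serious obstacle, since the argument is a direct chain of earlier results. The only real subtlety is bookkeeping in the first step. The diffeomorphism must be orientation-preserving and the two smooth slopes must really be distinct. Distinctness is exactly what the definition of cosmetic contact surgery requires, namely distinct smooth coefficients. Orientation preservation holds because contactomorphisms respect the co-orientation and hence the orientation induced by $\alpha\wedge d\alpha$.
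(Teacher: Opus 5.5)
Your proposal is correct and matches the paper's one-line proof: a contact cosmetic pair gives an orientation-preserving diffeomorphism $Y_{r_1}(K)\cong Y_{r_2}(K)$ with $r_1\neq r_2$, Theorem~\ref{tauresult} then gives $\tau_Y(K)=0$, and Corollary~\ref{corhedden} yields $\tb(L)+|\rot(L)|\le -1$. Your reduction to the smooth case and your unknot remark only make explicit what the paper leaves implicit, with one small correction: orientation preservation does not depend on preserving the co-orientation, because if $f^*\alpha'=g\alpha$ then $f^*(\alpha'\wedge d\alpha')=g^2\,\alpha\wedge d\alpha$.
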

\begin{proof}
Combined with Corollary~\ref{corhedden}, Theorem~\ref{tauresult}
immediately gives the inequality $\tb(L) + |\rot(L)| \leq -1$.
\end{proof}

\section{Extension of Hanselman's theorem to integer homology sphere $L$-space }\label{sec:Hanselman-extension}

In this section, we will prove Theorem \ref{adaptedHanselman} which extends Hanselman's \cite{Han} theorem to knots in integer homology sphere $L$-spaces. Throughout this section, $Y$ denotes an integer homology sphere $L$-space, and $K \subset Y$ denotes a nontrivial knot. Write
\[
Y_+ \;=\; Y_{p/q}(K), \qquad Y_- \;=\; Y_{-p/q}(K), \qquad p, q > 0,\ \gcd(p,q) = 1.
\]
The invariants $n_s$, $n = \sum_s n_s$, $g(K)$, and $\Th(K)$ are defined from $\Gh(K)$ as in Section~\ref{subsec:bordered}.

For the proof, we first observe that the structural properties of the immersed curves of knots in integer homology sphere $L$-spaces are the same as in the case of $S^3$. Then, using our extension of Ni-Wu's result from the previous section, we see that the remaining results established in \cite{Han} follow by the same argument, except for the $\din$-invariant bookkeeping. We include an outline of the core arguments for completeness.  

\subsection{Structural properties of $\Gh(K)$}
 
When $Y$ is an integer homology sphere $L$-space, the multicurve $\Gh(K)$ still
satisfies the following four structural properties used in Hanselman's proof. 
 
\begin{enumerate}
\item[(I)] $\Gh(K)$ has a unique distinguished component $\gamma_0$
meeting $\mu$, and this component wraps the cylinder $\Tpun$ once
homologically; the remaining components lie in a neighborhood of $\mu$. To see this, recall that the bordered-Floer pairing theorem gives
\[
\rk \HFhat(Y) \;=\; \min\bigl|\Gh(K) \pitchfork \mu\bigr|.
\]
Since $Y$ is an integer homology sphere $L$-space, $\rk\HFhat(Y) = 1$, so exactly one component of $\Gh(K)$ meets $\mu$, which we call $\gamma_0$.
The other components are null-homologous in $\Tpun$.
\item[(II)] Each $\gamma_i$ is unobstructed: no teardrop bounded by
$\gamma_i$ misses a marked point.
\item[(III)] For each $\gamma_i$ with $i \neq 0$, the net winding around
marked points and the net rotation cancel; in particular, any figure-eight
component bounds equal numbers of marked points on each side.
\item[(IV)] $\Gh(K)$ is invariant under the $\pi$-rotation symmetry of
$\Tpun$ about the origin, up to orientation reversal of each component.
\end{enumerate}

\subsection{Surgery formula and the isolated generator}

The key ingredient of the proof is the immersed curve formulation of the Heegaard Floer rational surgery formula.

\begin{theorem}[{\cite{HRW1,HRW2}}]\label{thm:surgery-immersed}
Let $p,q>0$ be coprime and let $\ell^i_{p,q}$ be the line of slope $p/q$ through
$(0,-\tfrac12+i/q+\varepsilon)$.  Then
\[
\HFhat\bigl(Y_{p/q}(K),i\bigr) \cong HF\bigl(\Gh(K),\ell^i_{p,q}\bigr)
\]
as relatively graded $\F$-vector spaces.  If $m$ is the slope of the
nonvertical segment of $\gamma_0$, then
\[
\rk\HFhat\bigl(Y_{p/q}(K)\bigr)=|p-mq|+n|q|.
\]
\end{theorem}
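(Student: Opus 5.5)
This statement is the integer homology sphere $L$-space version of the Hanselman--Rasmussen--Watson immersed-curve surgery formula. I would prove it by reducing to the bordered pairing theorem and then reading off ranks from the geometry of $\Gh(K)$.

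The first step is the gluing. The manifold $Y_{p/q}(K)$ is $M_K\cup_\partial(S^1\times D^2)$, where the meridian of the solid torus is glued to the curve $p\mu+q\lambda$ and $\lambda$ is the Seifert longitude (available since $H_1(Y)=0$). The HRW pairing theorem \cite{HRW1,HRW2} says that for any such filling,
\[
\HFhat(M_K\cup_h (S^1\times D^2)) \cong HF(\Gh(M_K,\phi_K),\, h(\mu_{D^2})),
\]
the Lagrangian Floer homology in $\Tpun$ of the knot invariant with a simple closed curve of slope $p/q$. Grading is only relative here. The pairing theorem is stated for arbitrary bordered manifolds with torus boundary, not just knot complements in $S^3$. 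So the isomorphism $\HFhat(Y_{p/q}(K))\cong HF(\Gh(K),\ell_{p,q})$ holds as soon as $\phi_K$ is the parametrization fixed in Section~\ref{subsec:bordered}, and no property of $Y$ beyond that parametrization enters.

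Next I would refine this to the $\Spin^c$ splitting. Lift to the cylinder cover used there, with its Alexander-height coordinate. Lifts of the slope-$p/q$ curve become lines $\ell^i_{p,q}$, and their intersections with $\Gh(K)$ fall into $p$ classes according to where the line crosses the vertical line through the pegs. Following HRW, the class indexed by the line through $(0,-\tfrac12+i/q+\varepsilon)$ matches the $\Spin^c$ structure labelled $i$ in the mapping cone of Theorem~\ref{thm:rationalsurgeryformula}. The reason is that the lift position records $\lfloor (i+ps)/q\rfloor$ in the same way the cone indexes the $A^+$ summands. Because $H_1(Y)=0$, the labelling of $\Spin^c(Y_{p/q}(K))$ by $\Z/p$ is exactly as in $S^3$. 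I expect this identification, and making sure no extra shift appears relative to the $S^3$ conventions, to be the main point needing care. My plan is to check it on the local unknot, where $Y_{p/q}(O)=Y\# L(p,q)$ and $\HFhat$ has rank $p$, one generator in each class.

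For the rank formula, put $\Gh(K)$ in the pulled-tight peg-board position and count minimal intersections with the slope-$p/q$ line in the torus. By (I), $\gamma_0$ is homologous to a curve that wraps once. After pulling tight it consists of one nonvertical segment of slope $m$, together with vertical segments. The nonvertical segment meets lines of slope $p/q$ in $|p-mq|$ points, by the determinant of $(1,m)$ against $(q,p)$. Each vertical segment, including those on the null-homologous components, which lie near $\mu$ by (I), contributes $|q|$ intersections, the determinant of $(0,1)$ against $(q,p)$. Since the curves are unobstructed (II), the pulled-tight position realizes minimal intersection and no bigons cancel. Summing gives $|p-mq|+n|q|$.
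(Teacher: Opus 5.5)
Your proposal is correct and follows the route the paper relies on. The paper gives no independent proof: it imports the statement from the HRW pairing theorem with its $\Spin^c$ and relative-grading refinements, and your count of $|p-mq|$ intersections for the nonvertical segment and $q$ for each vertical segment in pulled-tight position is the standard way the rank formula is read off, as in Hanselman. Two small precisions: $\gamma_0$ has a single nonvertical segment because it meets the offset meridian exactly once, which uses $\rk\HFhat(Y)=1$ and not just its homology class; and the matching of the label $i$ with the mapping-cone labelling should be taken from HRW's $\Spin^c$ refinement, since a local-unknot check (one generator per class, relative gradings only) cannot by itself pin it down for general $K$.
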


In the previous section, we say that if any knot $K$ in an integer homology sphere $L$-space admits a truly cosmetic surgery then $V_0(K)=H_0(K)=0$. We say $\CFKINFTY(Y,K)$ contains an isolated generator if   $\CFKINFTY(Y,K)$ is chain homotopy equivalent to $\F[U,U^{-1}]\oplus A$
where $A$ is acyclic. The following lemma characterizes this property of knot Floer complex.
\begin{lemma}\label{lem:isolated-gen}
Let $Y$ be an integer homology sphere $L$-space and let $K\subset Y$.
Assume
\[
        V_0(K)=H_0(K)=0.
\]
Then, $\CFKINFTY(Y,K)$ contains an isolated generator. Moreover, the distinguished component $\gamma_0$ of $\Gh(K)$ is horizontal.

\end{lemma}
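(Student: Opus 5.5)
The plan is to follow Hanselman's argument for $S^3$ and import it through the structural properties (I)--(IV) of $\Gh(K)$. The one extra input from the $L$-space hypothesis is that $\HF^+(Y)\cong\mathcal{T}^+$, so $B^+$ has homology a single tower. The argument has three steps.

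\textbf{Step 1: translate the hypothesis into $\CFKINFTY$.} Since $Y$ is an $L$-space, $C=\CFKINFTY(Y,K)$ is, up to filtered chain homotopy, a free $\F[U,U^{-1}]$-complex whose total homology is $\F[U,U^{-1}]$, supported in grading $\din(Y)$ mod $2$. By Lemma~\ref{lem:V0H0} it is enough to use $V_0=0$. This says the tower generator of $H_*(A_0^+)$ maps to the bottom of the tower in $H_*(B^+)$ without any $U$-power. As in the proof of Proposition~\ref{prop:V0H0tau}, one then finds a cycle $x$ representing the generator of $H_*(C\{i=0\})\cong\HFhat(Y)$ with filtration level $(0,j)$, $j\le 0$. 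Applying the same reasoning to $H_0=0$, via the $i\leftrightarrow j$ symmetry, gives a cycle $y$ at level $(i,0)$, $i\le 0$, that represents the generator of $H_*(C\{j=0\})$.

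\textbf{Step 2: produce the isolated generator.} Choose a reduced, horizontally and vertically simplified basis for $C$; this is possible over $\F$ (Lipshitz--Ozsváth--Thurston, or Hom's framework), since such bases exist for any knot complex independent of the ambient $L$-space $Y$. In such a basis there is a distinguished generator $x_0$ that generates vertical homology, and another that generates horizontal homology. I would show that the conditions in Step~1 force these to coincide in a single generator $x_0$ of filtration level $(0,0)$, with no incoming or outgoing horizontal or vertical arrows.

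Concretely, the vertical generator has Alexander grading $\tau_Y(K)=0$, which follows from Proposition~\ref{prop:V0H0tau}. If it had a horizontal arrow to or from another generator, then following the standard staircase and zig-zag analysis (Hom's $\varepsilon$-invariant argument) would give either $V_0>0$ or $H_0>0$. Hence $\varepsilon(K)=0$ in the sense of Hom, and the known consequence is $C\simeq\F[U,U^{-1}]\langle x_0\rangle\oplus A$ with $A$ acyclic. I expect this to be the main obstacle. Hom's proof is written for $S^3$, and it has to be checked that it uses only that the total homology is one $\F[U,U^{-1}]$ in the right grading. I would state that reduction explicitly and then cite the argument.

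\textbf{Step 3: conclude that $\gamma_0$ is horizontal.} Pass to the $UV=0$ quotient and read off $\Gh(K)$. The isolated generator $x_0$, at height $0$ with no arrows, corresponds to the intersection of $\gamma_0$ with $\mu$. The acyclic summand $A$ splits into squares and boxes, and these translate into closed null-homologous components, consistent with (I). A generator without vertical or horizontal arrows gives a segment of $\gamma_0$ that runs straight across the cylinder at height $0$ and closes up without wrapping around any pegs. So the nonvertical segment of $\gamma_0$ has slope $m=0$, and after pulling tight $\gamma_0$ is horizontal.
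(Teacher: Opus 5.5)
Your route is the paper's route: Hom's splitting criterion transported to $Y$, then Hanselman's curve dictionary. However, Step~2 contains a step that fails, and it is not the obstacle you flagged.

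As you note, Lemma~\ref{lem:V0H0} collapses the hypothesis to the single condition $V_0(K)=0$. The $i\leftrightarrow j$ symmetry in Step~1 only re-derives $\tau_Y(K)\le 0$. That condition forces neither $\tau_Y(K)=0$ nor the absence of arrows at the vertical generator. The left-handed trefoil in $S^3$ is a counterexample. Take generators $a,b,c$ at filtration levels $(0,-1)$, $(0,0)$, $(0,1)$ with $\partial a=Ub$ and $\partial c=b$.
\begin{itemize}
\item The cycle $a+Uc$ is the bottom of the tower in $H_*(A_0^+)$, and $v_0$ sends it to $[a]$, the bottom of $H_*(B^+)$. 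So $V_0=H_0=0$.
\item Yet $\tau=-1$, and the vertical generator $a$ has an outgoing horizontal arrow.
\item There is no isolated generator.
\end{itemize}
So your claim that a horizontal arrow would force $V_0>0$ or $H_0>0$ is false. Citing Proposition~\ref{prop:V0H0tau} for $\tau_Y(K)=0$ does not rescue it. Only its first half, $V_0=0\Rightarrow\tau_Y(K)\le 0$, is valid; its duality step $V_0(Y,K)=0\Rightarrow V_0(-Y,m(K))=0$ fails on the same example. The paper's one-line appeal to Hom has exactly the same defect: as literally stated, with $H_0$ the invariant of $K$ itself, the lemma is false.

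The missing input is vanishing for the mirror as well: $V_0(K)=0$ \emph{and} $V_0(m(K))=0$ for $m(K)\subset -Y$. This is what the cosmetic hypothesis actually supplies in Proposition~\ref{prop:adapted-NiWu}. Apply Corollary~\ref{cor:UB} both to $Y_{p/q}(K)$ and to $-Y_{-p/q'}(K)\cong(-Y)_{p/q'}(m(K))$; the Casson--Walker argument in Proposition~\ref{cosmeticconstraint} forces equality in both bounds.

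With both conditions your argument closes:
\begin{itemize}
\item $V_0(K)=0$ makes $\hat v_0$ nontrivial on homology, so $\tau_Y(K)\le\nu(K)\le 0$.
\item The same reasoning for $m(K)$ gives $\tau_Y(K)=-\tau_{-Y}(m(K))\ge 0$. Hence $\tau_Y(K)=\nu(K)=\nu(m(K))=0$.
\item Since $\nu=\tau+1$ exactly when $\varepsilon=-1$, and $\varepsilon(m(K))=-\varepsilon(K)$, this rules out $\varepsilon(K)=\pm1$.
\item Hom's splitting lemma then gives the isolated generator. Its proof is purely algebraic, as you anticipated, so it carries over to $Y$, and your Step~3 goes through.
\end{itemize}

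Separately, do not build Step~2 on a basis that is simultaneously horizontally and vertically simplified. Lipshitz--Ozsv\'ath--Thurston provide each kind separately, but the existence of a simultaneous one is not known in general. Hom's argument is designed to avoid needing it.
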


\begin{proof}
    Hom \cite[Proposition 3.11]{Hom17} proved that the condition $V_0(K)=H_0(K)=0$ implies that $\CFKINFTY(K)$ admits a filtered basis
   with a basis element generating $\HFKINFTY(K)$ and splitting off as a
direct summand; equivalently,
\[
        \CFKINFTY(K)\simeq \CFKINFTY(U)\oplus A.
\] Hence, $\CFKINFTY(K)$ has an isolated generator. It should be noted that Hom's survey restricts its attention to knot Floer complexes for knots in $S^3$. However, it can be easily checked that the proof of Proposition 3.11 in \cite{Hom17} works verbatim for integer homology $L$-spaces as it only uses chain complex level properties which hold in the general case.

Now, using the knot Floer to immersed-curve dictionary, Hanselman observes that an isolated generator property implies that the distinguished component $\gamma_0$ is the horizontal \cite[Section~2.2]{Han}.
\end{proof}

\begin{proposition}\label{prop:gamma0-d-invariant}
Assume that $\gamma_0$ is horizontal.  For each $i\in\Z/p\Z$, let
$x_0^i=\gamma_0\cap \ell^i_{p,q}$.  Then $x_0^i$ represents the tower generator
in $\HFhat(Y_{p/q}(K),i)$ and has Maslov grading
\[
M(x_0^i)=\din\bigl(Y_{p/q}(K),i\bigr).
\]
\end{proposition}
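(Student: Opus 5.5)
The plan is to compare the immersed-curve pairing directly with the mapping-cone computation of Proposition~\ref{prop:UB}, in three steps: locate $x_0^i$ among the intersection points, show it represents the tower, and compute its grading.

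\textbf{Step 1: the count of intersection points.} Since $\gamma_0$ is horizontal, its nonvertical slope is $m=0$. The line $\ell^i_{p,q}$ wraps once around the punctured cylinder per period, so it meets $\gamma_0$ exactly once in the $i$-th $\Spin^c$ summand; call this point $x_0^i$. By Theorem~\ref{thm:surgery-immersed},
\[
\rk\HFhat\bigl(Y_{p/q}(K)\bigr)=p+nq ,
\]
and the summand $p$ is accounted for precisely by the points $x_0^i$, one for each $i$. All other intersection points come from the vertical segments of $\gamma_0$ and from the null-homologous components $\gamma_j$, $j\neq 0$.

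\textbf{Step 2: $x_0^i$ represents the tower.} I would use the identification of $\Gh(K)$ with $C_{UV=0}(Y,K)$. Under it, a horizontal $\gamma_0$ corresponds to the isolated generator of Lemma~\ref{lem:isolated-gen}, with $\CFKINFTY(Y,K)\simeq \CFKINFTY(Y,O)\oplus A$ and $A$ acyclic. Feeding this splitting into the mapping cone of Theorem~\ref{thm:rationalsurgeryformula} should split $\mathbb{X}^+_{i,p/q}$ as the cone for the local unknot plus a summand built from $A$.

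The unknot summand computes $HF^+\bigl(Y\#L(p,q),i\bigr)\cong\mathcal{T}^+$. In the pairing picture this summand corresponds to $\gamma_0$ paired with $\ell^i_{p,q}$, which is the single point $x_0^i$. The $A$-summand, and hence the remaining curve components, carries no $U$-tower because $A$ is acyclic over $\F[U,U^{-1}]$. Therefore the generator of the image of $HF^\infty\to HF^+$ maps under $HF^+\to\HFhat$ to the class of $x_0^i$. Equivalently, $x_0^i$ survives as the bottom tower element, since $\HFhat$ sees the bottom of each tower.

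\textbf{Step 3: the grading.} The bottom element of the tower has grading $\din$, so $M(x_0^i)=\din(Y_{p/q}(K),i)$. As a cross-check on the absolute grading, Proposition~\ref{prop:UB} gives the value $\din(Y)+\din(L(p,q),i)$ once we use $V_0=H_0=0$ together with Lemma~\ref{lem:monotone}. This agrees with the unknot computation, where $x_0^i$ is literally the generator of $\HFhat(Y\#L(p,q),i)$, shifted by $\din(Y)$.

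\textbf{Main obstacle.} The hardest point is fixing the absolute Maslov grading on the pairing side. Theorem~\ref{thm:surgery-immersed} only gives relative gradings, so we must argue that the summand carried by $\gamma_0$ is genuinely the tower summand and not merely of the correct rank. I would try to handle this via naturality of the splitting: the direct-sum decomposition of $\CFKINFTY$ induces a decomposition of the type-D structure and hence of the pairing, compatibly with the surgery exact triangle. This reduces the grading question to the local unknot in $Y$, where it is checked directly by the connected sum formula for $d$-invariants.
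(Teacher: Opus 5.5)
Your proposal is correct and follows the same route as the paper. Both identify the horizontal $\gamma_0$ with the isolated-generator summand $\CFKINFTY(Y,O)$ of Lemma~\ref{lem:isolated-gen}, conclude that $x_0^i$ carries the bottom of the tower in the mapping-cone description, and read off $M(x_0^i)=\din(Y_{p/q}(K),i)$ by definition. The paper simply cites \cite[Proposition~17]{Han} for the identification of $\gamma_0\cap\ell^i_{p,q}$ with the tower summand, which is exactly the step you isolate as the main obstacle and propose to settle by naturality of the splitting (one small slip: a horizontal $\gamma_0$ has no vertical segments, so all the other generators come from the components $\gamma_j$, $j\neq 0$).
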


\begin{proof}
By Lemma \ref{lem:isolated-gen}, if $\CFKINFTY(K)$ has an isolated generator tower. 
The horizontal component is the curve representative of the isolated tower
summand in $\CFKINFTY(Y,K)$, exactly as in \cite[Proposition~17]{Han}.  Hence the intersection $x_0^i$ represents the
bottom of the corresponding tower in the mapping-cone description of
$HF^+(Y_{p/q}(K),i)$.  By definition, the absolute grading of this bottom tower
element is $\din(Y_{p/q}(K),i)$.
\end{proof}

\begin{proposition}\label{prop:adapted-NiWu}
Let $p,q,q'>0$ with $\gcd(p,q)=\gcd(p,q')=1$.  If
$Y_{p/q}(K)\cong Y_{-p/q'}(K)$, then:
\begin{enumerate}
\item[\rm (a)] $V_0(K)=H_0(K)=0$.  Consequently $\gamma_0$ is horizontal.
\item[\rm (b)] $q=q'$, so the slopes are $\{p/q,-p/q\}$.
\item[\rm (c)] $q^2\equiv -1\pmod p$.
\end{enumerate}
\end{proposition}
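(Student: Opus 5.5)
Part (a) is essentially already in hand. By Theorem~\ref{thm:Wu} and Theorem~\ref{thm:NiWu} the cosmetic pair has opposite signs, so we are in the purely cosmetic situation $Z=Y_{p/q}(K)\cong Y_{-p/q'}(K)$. Proposition~\ref{cosmeticconstraint}(3) then gives equality $\din(Z,\mathfrak{s})=\din(Y)+\din(L(p,q),\mathfrak{s})$ for every $\mathfrak{s}$. By Corollary~\ref{cor:UB} this forces $V_0=0$, and Lemma~\ref{lem:V0H0} gives $H_0=V_0=0$. Lemma~\ref{lem:isolated-gen} then shows that $\gamma_0$ is horizontal. The same argument applies to the mirror $m(K)\subset -Y$, since $-Y$ is also an integer homology sphere $L$-space, and there $-Z=(-Y)_{p/q'}(m(K))$.

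For (b) I would compare ranks using Theorem~\ref{thm:surgery-immersed}. Since $\gamma_0$ is horizontal, the slope $m$ of its nonvertical segment is $0$. Hence $\rk\HFhat(Y_{p/q}(K))=p+nq$. Applying the same formula to $m(K)$ in $-Y$, whose curve $\Gh(m(K))$ is the reflection of $\Gh(K)$ with the same $n$, gives $\rk\HFhat(Y_{-p/q'}(K))=p+nq'$. The two manifolds are diffeomorphic, so $nq=nq'$. If $n\neq 0$ this gives $q=q'$. If $n=0$, then $\Gh(K)$ is just a horizontal $\gamma_0$ with no vertical segments, so $\CFKINFTY(Y,K)$ is that of the unknot. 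Then $\widehat{HFK}$ is supported in Alexander grading $0$ and $g(K)=0$, contradicting nontriviality of $K$. The one point to check is that genus detection holds for knots in integer homology sphere $L$-spaces; it does, by the cited result \cite{Ozsvth2003HolomorphicDA}.

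For (c) I would use the $\din$-invariants. Part (a) together with Proposition~\ref{cosmeticconstraint}(3) gives $\din(Z,\mathfrak{s})=\din(Y)+\din(L(p,q),\mathfrak{s})$. Applying the same reasoning to $-Z=(-Y)_{p/q}(m(K))$, whose $V_0$ also vanishes by (a) for the mirror, gives $\din(-Z,\mathfrak{s})=-\din(Y)+\din(L(p,q),\mathfrak{s})$. Therefore $\din(Z,\mathfrak{s})=\din(Y)-\din(L(p,q),\mathfrak{s}')=\din(Y)+\din(L(p,-q),\mathfrak{s}')$ for a suitable bijection $\mathfrak{s}\mapsto\mathfrak{s}'$ of $\Spin^c$ structures induced by the identification. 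The $\din(Y)$ terms cancel, so the multisets of $\din$-invariants of $L(p,q)$ and $L(p,-q)$ agree up to an affine identification of $\Spin^c$ structures compatible with the linking form.

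I expect the main obstacle to be this last step: extracting $q^2\equiv-1\pmod p$ from the match of $\din$-invariants (equivalently, the linking forms) of $L(p,q)$ and $L(p,-q)$. Here I would follow Ni--Wu and Hanselman. Since $H_1(Z)\cong\Z/p$ carries linking form $-q/p$ when computed from the $p/q$ description and $q'/p=q/p$ from the other, any orientation-preserving diffeomorphism must satisfy $-q\equiv q a^2 \pmod p$ for some unit $a$. This gives only $-1$ is a square mod $p$, which is a weaker statement. To pin down $a\equiv q^{\pm1}$, I would use the $\din$-invariant equality $\din(L(p,q),i)=-\din(L(p,q),\phi(i))$ together with the recursive formula for lens space $\din$-invariants, as in Ni--Wu's Theorem 1.2. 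That argument is purely about lens spaces, once the $\din(Y)$ shifts have cancelled, so it transfers verbatim.
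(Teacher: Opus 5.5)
\textbf{Parts (a) and (b).} These are fine and agree with the paper. In (b) you are slightly more careful than the paper: you compute the rank of the negative surgery via $m(K)\subset -Y$, and you justify $n>0$ by genus detection rather than asserting it.

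\textbf{The gap is in (c).} The linking-form comparison only says the diffeomorphism acts on $H_1(Z)\cong\Z/p$ by some unit $a$ with $a^2\equiv-1\pmod p$. Nothing in your argument forces $a\equiv q^{\pm1}$. Your proposed remedy is to use $\din(L(p,q),i)=-\din(L(p,q),\phi(i))$ ``together with the recursive formula'' to pin down $a$. That is not an argument: you do not say which property of the recursion would determine $a$. It is also not how the congruence is obtained, either in Ni--Wu or in the paper. As written, (c) is not proved.

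\textbf{The missing ingredient.} What you need is the classical number-theoretic fact that the Dedekind sum $s(q,p)$ vanishes if and only if $q^2\equiv-1\pmod p$ (Rademacher--Grosswald; the paper invokes this criterion via \cite[Lemma~23]{Han}). Combine it with $\lambda(L(p,q))=-\tfrac12 s(q,p)$.

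\textbf{How the paper finishes.} With $q=q'$ from (b) and $\Delta_K''(1)=0$ from Proposition~\ref{cosmeticconstraint}(1), the Casson--Walker surgery formula gives $\lambda(L(p,q))=\lambda(L(p,-q))=-\lambda(L(p,q))$. Hence $\lambda(L(p,q))=0$, so $s(q,p)=0$, and the criterion gives the congruence.

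\textbf{Your route can be repaired.} Your own $\din$-identity also suffices, and you do not need to identify the bijection $\mathfrak{s}\mapsto\mathfrak{s}'$. Sum $\din(L(p,q),\mathfrak{s})=-\din(L(p,q),\mathfrak{s}')$ over all of $\Spin^c(Z)$ to get $\sum_i\din(L(p,q),i)=0$. Since lens spaces are $L$-spaces, this sum equals $-2p\,\lambda(L(p,q))$, so $\lambda(L(p,q))=0$ and the Dedekind-sum criterion applies. The worry about the affine identification of $\Spin^c$ structures is therefore a red herring; the Dedekind-sum criterion is the step your proposal lacks.
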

\begin{proof}
Part \textup{(a)} follows from Proposition~\ref{cosmeticconstraint}(1),
Corollary~\ref{cor:UB}, and Lemma~\ref{lem:isolated-gen}.

For \textup{(b)}, Lemma~\ref{lem:isolated-gen} implies that slope $m$ of the distinguished component $\gamma_0$ is equal to zero.  Therefore
Theorem~\ref{thm:surgery-immersed} gives
\[
\rk\HFhat\bigl(Y_{p/q}(K)\bigr)=p+nq,
\qquad
\rk\HFhat\bigl(Y_{-p/q'}(K)\bigr)=p+nq'.
\]
Since the two surgered manifolds are diffeomorphic, the ranks are equal.  The
knot is nontrivial, so $n>0$; hence $q=q'$.

For \textup{(c)}, use the Casson-Walker surgery formula and
$\Delta_K''(1)=0$.  The equality
$Y_{p/q}(K)\cong Y_{-p/q}(K)$ implies
\[
\lambda(Y)+\lambda(L(p,q))
 = \lambda(Y)+\lambda(L(p,-q))
 = \lambda(Y)-\lambda(L(p,q)).
\]
Thus $\lambda(L(p,q))=0$.  Equivalently, the Dedekind sum $s(q,p)$ vanishes;
by the standard vanishing criterion used in \cite[Lemma~23]{Han}, this is
equivalent to $q^2\equiv -1\pmod p$.
\end{proof}

\subsection{Relative gradings}

Assume throughout this subsection that $\gamma_0$ is horizontal.  For an
intersection generator $x\in \HFhat(Y_\pm,\s)$ define
\[
\Mrel(x)=M(x)-\din(Y_\pm,\s).
\]

The diffeomorphism between $Y_{+}$ and $Y_{-}$ defines a map
\[
\phi_i:\Gh(K)\cap\ell^i_{p,q}\longrightarrow \Gh(K)\cap\ell^i_{p,-q}.
\]  Let $\phi:=\coprod_i\phi_i$. It is an isomorphism of ungraded vector spaces.
Now, by the surgery formula  \[ \HF(Y_{\pm},i) \cong HF\bigl(\Gh(K),\ell^i_{p,q}\bigr). \]

The map $\phi_i$ takes an intersection  of the slope line $\ell^i_{p,q}$ with a vertical segment to intersection of a nearby  $\ell^i_{p,q}$ with the same
vertical segment. Since the distinguished generator $x_0^i=\gamma_0\cap \ell^i_{p,q}$ doesn't lie in a vertical segment, it is sent to the corresponding distinguished generator.
For a generator $x$ on a vertical segment, let $A(x)$ be the height of that
segment and let $k(x)$ be the number of marked points, counted with
multiplicity, in the triangle in $\Tpun$ bounded by $\mu$, the relevant arc of
$\ell^i_{p,q}$, and the horizontal arc of $\gamma_0$.

\begin{proposition}[Grading shift]\label{prop:grading-shift}
For every generator $x$ on a vertical segment,
\[
\Drel(x):=\Mrel\bigl(\phi_i(x)\bigr)-\Mrel(x)=1-2|A(x)|-4k(x).
\]
For the distinguished generator $x_0^i$, one has $\Drel(x_0^i)=0$.
\end{proposition}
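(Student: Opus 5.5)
The plan follows Hanselman's proof of \cite[Proposition~18]{Han}. That argument is local to the punctured cylinder and uses only the structural properties (I)--(IV) of $\Gh(K)$, together with the fact that $\gamma_0$ is horizontal. Absolute gradings on $Y$ enter only through the normalization $\Mrel(x)=M(x)-\din(Y_\pm,\s)$, and this normalization removes the additive constant $\din(Y)$.

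\emph{Step 1: the distinguished generator.} By Proposition~\ref{prop:gamma0-d-invariant}, $x_0^i$ represents the bottom of the tower in both $\HFhat(Y_{p/q}(K),i)$ and $\HFhat(Y_{-p/q}(K),i)$. Hence $M(x_0^i)=\din(Y_\pm,i)$, so $\Mrel(x_0^i)=0$ on both sides. Since $\phi_i(x_0^i)$ is the distinguished generator for $\ell^i_{p,-q}$, we get $\Drel(x_0^i)=0$.

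\emph{Step 2: reduce to a relative computation.} For a generator $x$ on a vertical segment, I will compute $\Mrel(x)=M(x)-M(x_0^i)$ as a relative Maslov grading inside the Lagrangian Floer complex $HF(\Gh(K),\ell^i_{p,\pm q})$. This relative grading is given by the index of a domain connecting $x_0^i$ to $x$. The domain is bounded by an arc of $\ell^i_{p,\pm q}$, an arc of $\Gh(K)$ running from the horizontal part of $\gamma_0$ through pegs to the vertical segment at height $A(x)$, and possibly grading arrows. The index equals the sum of three contributions: twice the multiplicity at the $w$/$z$ basepoints, a rotation (corner) term, and a winding term. This is the standard formula of \cite{HRW2}, and it is valid for any manifold with torus boundary, in particular for $M_K\subset Y$.

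\emph{Step 3: compare the two slopes.} Replacing $\ell^i_{p,q}$ by $\ell^i_{p,-q}$ leaves the $\Gh(K)$-portion of the boundary unchanged. It reflects the line portion, and this alters three quantities.
\begin{itemize}
\item The corner angles at $x_0^i$ and $x$ flip between acute and obtuse. This gives the constant contribution $1$.
\item The traversed arc passes the $|A(x)|$ pegs between height $0$ and $A(x)$ on the opposite side. This gives $-2|A(x)|$; the $\pi$-rotation symmetry (IV) lets us treat $A(x)<0$ like $A(x)>0$.
\item The marked points enclosed by the triangle formed by $\mu$, the arc of $\ell$, and $\gamma_0$ change sides. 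Each enclosed point changes the basepoint multiplicity by $2$ on each side, giving $-4k(x)$.
\end{itemize}
Adding these contributions gives $\Drel(x)=1-2|A(x)|-4k(x)$.

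\emph{Main obstacle.} The hardest part is Step 3's sign bookkeeping, in particular checking that the winding and peg contributions for the $-p/q$ line combine exactly as in Hanselman's $S^3$ computation. I will handle this by working directly in the cylindrical model and comparing with the unknot-like local picture near $\gamma_0$. Properties (II) and (III) ensure that non-distinguished components contribute no extra rotation, so the computation reduces to the one in \cite{Han}.
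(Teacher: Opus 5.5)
Your proposal is correct and takes essentially the same route as the paper. Both reduce the claim to Hanselman's local relative-grading computation in the punctured cylinder and observe that subtracting $\din(Y_\pm,\s)$ removes all dependence on $\din(Y)$; $\Drel(x_0^i)=0$ then follows from Proposition~\ref{prop:gamma0-d-invariant} exactly as in your Step 1. One imprecision in Step 2: because $\gamma_0$ is a closed horizontal component, no arc of $\Gh(K)$ joins it to a vertical segment, so the link from $x_0^i$ to $x$ must go through grading data. That data is either grading arrows or, as in Hanselman, the knot Floer gradings of the nearby $\mu$-intersections; it is identical for both slopes and cancels in $\Drel$.
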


\begin{proof}
First, observe that \cite[Lemma~13]{Han} holds true in the general case. Then, we see that the rest of the argument which involves  relative grading computation using bigons, holds true in the general case   \cite[Proposition~25]{Han}.  The
$Y$-dependent absolute-grading information doesn't affect the relative grading computation.
\end{proof}

If $Y_+\cong Y_-$, then the graded multisets of relative gradings agree.  Since
$\phi=\coprod_i\phi_i$ is a bijection of the intersection generators, we obtain
\begin{equation}\label{eq:sumDelta}
\sum_{i\in\Z/p\Z}\ \sum_{\substack{x\in \Gh(K)\cap \ell^i_{p,q}\\
x\text{ on a vertical segment}}} \Drel(x)=0.
\end{equation}

\begin{corollary}\label{cor:genus-one}
A nontrivial genus-one knot in $Y$ admits no truly cosmetic surgery.
\end{corollary}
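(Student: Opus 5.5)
Suppose, for contradiction, that $K\subset Y$ is a nontrivial genus-one knot with $Y_{r_1}(K)\cong Y_{r_2}(K)$ for distinct slopes, orientation preserved. The plan is to reduce to the symmetric setting already established and then show that the sum \eqref{eq:sumDelta} cannot vanish when the genus is one.

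First we normalize the slopes. By Theorem~\ref{thm:Wu} and Theorem~\ref{thm:NiWu}, after possibly replacing $(Y,K)$ by $(-Y,m(K))$, we may take the slopes to be $p/q$ and $-p/q'$ with $p,q,q'>0$. This replacement is harmless: $-Y$ is again an integer homology sphere $L$-space, and $g(m(K))=g(K)=1$. Proposition~\ref{prop:adapted-NiWu} then gives $V_0=H_0=0$, a horizontal $\gamma_0$, and $q=q'$. So the slopes are $\pm p/q$, Proposition~\ref{prop:grading-shift} applies, and we obtain identity \eqref{eq:sumDelta}.

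Next we show every term in \eqref{eq:sumDelta} is strictly positive. Since $g(K)=1$, all generators of the reduced curve sit at heights $|s|\le 1$. Vertical segments join consecutive pegs, so the only possible vertical segment heights are $s\in\{-1,0,1\}$. Concretely, a segment at height $\pm1$ would join the pegs at $\pm\tfrac12$ and $\pm\tfrac32$, producing generators at Alexander grading $\pm2$ at the top end, which genus one rules out. Hence only height-$0$ segments, joining the pegs at $\pm\tfrac12$, occur, and every vertical generator has $A(x)=0$.

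It remains to see that $k(x)=0$ for such generators, i.e.\ that the triangle bounded by $\mu$, the arc of $\ell^i_{p,q}$, and the horizontal arc of $\gamma_0$ contains no marked points. Because $\gamma_0$ is horizontal at height $0$ and the intersection point lies on a segment between heights $-\tfrac12$ and $\tfrac12$, this triangle has vertical extent less than $1/2$ on the relevant side of the origin. It therefore contains no peg, and $k(x)=0$. This is the step I expect to need the most care: I would check it against the conventions of \cite[Lemma~13, Proposition~25]{Han}, including how $\ell^i_{p,q}$ is placed through $(0,-\tfrac12+i/q+\varepsilon)$ and how the triangle closes up in the cylinder. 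The aim is to confirm that no marked point is ever enclosed when $|A(x)|=0$.

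Granting this, Proposition~\ref{prop:grading-shift} gives $\Drel(x)=1$ for every vertical generator. The left side of \eqref{eq:sumDelta} therefore equals the number of vertical generators, which is $nq>0$ by Theorem~\ref{thm:surgery-immersed}, since $n>0$ for a nontrivial knot. This contradicts \eqref{eq:sumDelta}, so no truly cosmetic surgery exists.

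If the $k(x)=0$ claim turns out to be delicate, there is an alternative argument that avoids the triangle count. For genus one with $V_0=0$ and $\tau_Y(K)=0$, Lemma~\ref{lem:monotone} gives all $V_k=H_{-k}=0$, and the difference $\rk\HFhat(Y_{p/q})$ versus $\rk \HFhat(Y_{-p/q})$ is governed only by $n_0$. I would then compare the absolute gradings of the reduced parts directly, using Proposition~\ref{cosmeticconstraint}(2) together with $\chi$ of the height-$0$ pieces, to reach the same sign contradiction.
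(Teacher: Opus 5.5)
Your proof is correct and follows the paper's own argument. After normalizing to slopes $\pm p/q$ with $\gamma_0$ horizontal, genus one forces every vertical segment to have height $0$, so $A(x)=k(x)=0$ and $\Drel(x)=1$ for each of the $nq>0$ vertical generators, contradicting \eqref{eq:sumDelta}. Your justification that $k(x)=0$ (the triangle lies in a horizontal strip of height less than $\tfrac12$, which contains no marked point at half-integer height) is sound and more explicit than the paper's bare assertion, so the fallback in your last paragraph is not needed.
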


\begin{proof}
If $g(K)=1$, all vertical segments have height $0$.  Hence every vertical
intersection has $A(x)=0$ and $k(x)=0$, so Proposition~\ref{prop:grading-shift}
gives $\Drel(x)=1$.  Since $K$ is nontrivial, at least one vertical segment is
present.  This makes the left side of \eqref{eq:sumDelta} positive, a
contradiction.
\end{proof}

Because $k(x)\ge0$, a generator at height $s$ contributes at most
$1-2|s|$ to $\Drel$.  Summing over all $\mathrm{Spin}^c$ structures in
\eqref{eq:sumDelta} and using $n_{-s}=n_s$, we obtain
Hanselman's rank inequality
\begin{equation}\label{eq:rank-ineq}
n_0\ge \sum_{s\ne0}(2|s|-1)n_s
     =2n_1+6n_2+10n_3+\cdots.
\end{equation}
\subsection{Large slopes: $p/q>1$}

\begin{theorem}\label{thm:large-slopes}
If $Y_+\cong Y_-$ with $p/q>1$, then $p/q=2$, $g(K)=2$, and $n_0=2n_1$.
\end{theorem}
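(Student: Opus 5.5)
We follow Hanselman's argument for \cite[Theorem~2(ii)]{Han}, feeding in the grading-shift identity \eqref{eq:sumDelta} and the structural facts already established for $Y$. By Proposition~\ref{prop:adapted-NiWu}, $\gamma_0$ is horizontal, the slopes are $\pm p/q$, and $q^2\equiv -1 \pmod p$. When $p/q>1$, each line $\ell^i_{p,q}$ meets a vertical segment at height $s$ exactly $q$ times as $i$ ranges over $\Z/p\Z$. It also meets the horizontal arc of $\gamma_0$ in a short arc, so the triangles bounded by $\mu$, $\ell^i_{p,q}$ and $\gamma_0$ are thin.

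\textbf{Step 1: compute $k(x)$.} For a generator $x$ at height $s$, the triangle counted by $k(x)$ has base on $\gamma_0$ at height $0$ and top vertex at height about $s$. Its horizontal width is at most $q|s|/p<|s|$. Following \cite[Proposition~25]{Han}, for $p/q>1$ we will show that $k(x)$ is determined by how many integer translates of the pegs at heights $\pm\tfrac12,\dots,\pm(|s|-\tfrac12)$ lie inside this triangle. Summing over all $i\in\Z/p\Z$ and all $q$ intersections with a given segment, the total should be the explicit quantity
\[
\sum_{x\text{ at height } s} k(x)\;=\;n_s\sum_{j=1}^{|s|-1}\#\{\text{lattice points under the line of slope } p/q\},
\]
which grows roughly like $n_s\,q\,s^2/(2\cdot p/q)$ and is bounded below by a function of $s$. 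This is a purely planar count, identical to the $S^3$ case, because Proposition~\ref{prop:grading-shift} and properties (I)--(IV) hold verbatim in $Y$.

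\textbf{Step 2: the summed identity.} Substituting Step 1 into \eqref{eq:sumDelta} gives
\[
q\Big(n_0-\sum_{s\neq 0}(2|s|-1)n_s\Big)\;=\;4\sum_{x}k(x).
\]
The right-hand side is bounded below by a term quadratic in $s$ with coefficient depending on $p/q$. For $p/q>1$ each generator at height $|s|\ge 2$ is forced into triangles containing pegs. Comparing the two sides against the rank inequality \eqref{eq:rank-ineq} and the symmetry $n_s=n_{-s}$ should yield three conclusions: $n_s=0$ for $|s|\ge 3$; the combination $p/q=2$ (so $p=2$, $q=1$, consistent with $q^2\equiv -1 \pmod 2$); and $n_0=2n_1$.

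\textbf{Step 3: the genus.} Corollary~\ref{cor:genus-one} rules out $g(K)=1$. Step 2 shows that no vertical segment occurs at height $|s|\geq 3$, and $g(K)$ is the maximal height reached by $\Gh(K)$. Since $\gamma_0$ is horizontal, this forces $g(K)=2$; the case $g(K)=0$ is excluded because $K$ is nontrivial, so $n>0$.

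The main obstacle is Step 1: the exact lattice-point bookkeeping for $k(x)$ as a function of $i$ and $p/q$, and extracting from it the sharp constraint $p/q=2$ rather than only an inequality. The first approach to try is to use $q^2\equiv -1 \pmod p$ together with the Dedekind-sum symmetry to evaluate the lattice count in closed form, exactly as in \cite[Section~4]{Han}. Throughout, we check that only the relative gradings, which do not depend on $Y$, enter the argument.
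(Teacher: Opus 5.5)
\textbf{There is a genuine gap in Step~2.} The summed identity \eqref{eq:sumDelta}, namely $q\bigl(n_0-\sum_{s\neq0}(2|s|-1)n_s\bigr)=4\sum_x k(x)$, is a single linear relation. It cannot produce any of the three conclusions you extract from it.

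For $p/q>1$, the triangle attached to a generator at height $s$ has width $|s|q/p$. So $k(x)=0$ whenever $|s|<p/q$. Your assertion that every generator at height $|s|\ge2$ lies in a triangle containing pegs is therefore false; take $p/q=5/2$ and $s=2$. Two consequences follow:
\begin{itemize}
\item For $p/q>2$ with heights at most $2$, the identity reads $n_0=2n_1+6n_2$, which does not exclude $n_2>0$.
\item When only heights $0,\pm1$ occur, it reads $qn_0=2qn_1$ for \emph{every} $p/q>1$. So no closed-form lattice count can single out $p/q=2$.
\end{itemize}

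The missing idea is a $\mathrm{Spin}^c$-by-$\mathrm{Spin}^c$ analysis that uses absolute gradings. The paper proceeds as follows.
\begin{itemize}
\item It determines which lines $\ell^i_{p,q}$ meet segments of which heights. This shows that if some $n_s\neq0$ with $s\ge2$, then the structures $\{0,\dots,q-1\}$ have strictly larger $\rk\HFhat$ than the rest, so the diffeomorphism must preserve that set.
\item Summing $d(Y_\pm,i)=d(Y)\pm d(L(p,q),i)$ over that set (Propositions~\ref{prop:UB} and~\ref{cosmeticconstraint}, using $V_0=H_0=0$) forces $\sum_{i=0}^{q-1}d(L(p,q),i)=0$. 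This contradicts \cite[Lemma~23]{Han}, because $q^2\equiv-1\pmod p$.
\item Only after this does \eqref{eq:sumDelta} give $n_0=2n_1$.
\item The value $p/q=2$ then comes from comparing relative-grading sums inside individual $\mathrm{Spin}^c$ structures (the height-set analysis of \cite[Theorem~27]{Han}), not from the global sum.
\end{itemize}
Your closing remark that only relative gradings enter is therefore backwards. The term $d(Y)$ cancels, but the lens-space $d$-invariants are indispensable.

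\textbf{Step~3 has an off-by-one error.} A segment of height $s$ joins the pegs at $s\pm\frac12$, so generators reach height $\max\{|s|:n_s\neq0\}+1$. This is why, in Corollary~\ref{cor:genus-one}, $g(K)=1$ means all segments have height $0$. Knowing only that $n_s=0$ for $|s|\ge3$ gives $g(K)\le3$, and horizontality of $\gamma_0$ adds nothing. You need $n_s=0$ for $|s|\ge2$; then Corollary~\ref{cor:genus-one} rules out $g(K)=1$ and yields $g(K)=2$.
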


\begin{proof}
First suppose that some $n_s$ with $s\ge2$ is nonzero.  For $p/q>1$, each
slope line meets a length-one vertical segment at most once.  As in
\cite[Theorem~27]{Han}, the set of first $q$ 
${\Spin}^c$ structures $\{ 0,1,\ldots,q-1\}$
 has larger $\rk\HFhat$ than its complement. This
 can be seen by counting intersections  of the height-$s$ segments with  $\ell^i_{p,q}$ and then applying  inequality
\eqref{eq:rank-ineq}. Thus $\phi$ must preserve this set
of $\mathrm{Spin}^c$ structures.
  Summing up and
using the $\din$-invariant identities we obtain
\[
\sum_{i=0}^{q-1}\din(Y_+,i)
= q\din(Y)+\sum_{i=0}^{q-1}\din(L(p,q),i),
\]
\[
\sum_{i=0}^{q-1}\din(Y_-,i)
= q\din(Y)-\sum_{i=0}^{q-1}\din(L(p,q),i).
\]
The two sums must agree, so
$\sum_{i=0}^{q-1}\din(L(p,q),i)=0$.  This contradicts
\cite[Lemma~23]{Han} as we have
$q^2\equiv -1\pmod p$ by Proposition~\ref{prop:adapted-NiWu}.  Thus
$n_s=0$ for all $s\ge2$.

By Corollary~\ref{cor:genus-one}, $g(K)\ne1$.  The genus detection property then forces $g(K)=2$.  For height
$0$ intersections, $\Drel=1$; for height $\pm1$ intersections and $p/q>1$, the
triangles contain no marked points, so $\Drel=-1$.  Equation~\eqref{eq:sumDelta}
therefore gives
\[
q n_0 -2q n_1=0,
\]
and hence $n_0=2n_1$. Now, for determining the slope, height-set analysis
in \cite[Theorem~27]{Han} applies verbatim: if $1<p/q<2$, one obtains a type of
$\mathrm{Spin}^c$ structure whose relative-grading sum is strictly positive;
if $p/q>2$, the necessary balancing type forces the previous unbalanced type to
occur.  Both contradict \eqref{eq:sumDelta}.  Hence $p/q=2$.
\end{proof}

\subsection{Small slopes: $p/q < 1$}

For $p/q<1$, the line $\ell^i_{p,q}$ wraps more than once around the cylinder
between consecutive integer heights.  The triangles in
Proposition~\ref{prop:grading-shift} therefore contain additional marked points,
which strengthens \eqref{eq:rank-ineq}.

\begin{lemma}[Small-slope rank inequality]\label{lem:rank-ineq-strong}
If $Y_+\cong Y_-$ with $p/q<1$, then for each $s\ge1$ there are constants
$a_s>2s^2-1$ such that
\begin{equation}\label{eq:rank-ineq-strong}
n_0\ge \sum_{s\ge1}2n_s a_s
   >2n_1+14n_2+34n_3+\cdots.
\end{equation}
\end{lemma}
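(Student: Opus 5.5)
The plan is to follow Hanselman's small-slope argument \cite[Section~5]{Han}, the analogue of his Lemma on small slopes, and to check that every ingredient is either already established above for $Y$ or is a purely relative-grading computation on $\Gh(K)$ in the punctured cylinder, which does not see $Y$. By Proposition~\ref{prop:adapted-NiWu}, $\gamma_0$ is horizontal, the slopes are $\pm p/q$ with $q^2\equiv -1 \pmod p$, and the bijection $\phi=\coprod_i\phi_i$ of generators satisfies \eqref{eq:sumDelta}. Proposition~\ref{prop:grading-shift} gives $\Drel(x)=1-2|A(x)|-4k(x)$, so the whole task is to bound the marked-point counts $k(x)$ from below when $p/q<1$. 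The absolute $\din$-invariant bookkeeping enters only through $\Drel(x_0^i)=0$, which is already established.

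First, fix $s\ge 1$ and consider the height-$s$ vertical segments, which run from the peg at $s-\tfrac12$ to the peg at $s+\tfrac12$. Since $p/q<1$, the union over $i\in\Z/p\Z$ of the lines $\ell^i_{p,q}$ meets each such segment exactly $q$ times, and these intersections are spread evenly in height with spacing $1/q$. For an intersection $x$ at height coordinate $t\in(s-\tfrac12,s+\tfrac12)$, the triangle bounded by $\mu$, the arc of $\ell^i_{p,q}$ and the horizontal $\gamma_0$ (at height $0$) has vertical extent about $t$. Its horizontal extent is $t\cdot q/p$ in units of the cylinder circumference, so it winds around the cylinder roughly $tq/p$ times. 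On each height level $j$ with $1\le j<t$ it therefore encloses about $(t-j)q/p$ marked points. I will write $k(x)$ as the exact lattice-point count $\sum_{j=1}^{\lfloor t\rfloor}\lfloor (t-j)q/p + c_i\rfloor$, with the offset $c_i$ determined by the $\varepsilon$-shift and by $i/q$, mirroring Hanselman's Lemma~13 computation.

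Second, I will sum $\Drel(x)$ over the $q$ intersections with one height-$s$ segment, counting both $s$ and $-s$ via the symmetry (IV) and $n_s=n_{-s}$. This gives a total of $-2a_s$ per pair of segments, where
\[
a_s = \frac{1}{2}\sum_{x}\bigl(2s-1+4k(x)\bigr),
\]
normalized by the per-segment intersection count. Height-$0$ segments contribute $+1$ for each of their $q$ intersections, since $k=0$ there. Then \eqref{eq:sumDelta} becomes $q\,n_0 = q\sum_{s\ge1}2n_s a_s$ up to that normalization, so $n_0 = \sum_{s\ge 1} 2n_s a_s$, which in particular gives the inequality.

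The main obstacle is the strict lower bound $a_s>2s^2-1$. The heuristic is that the average of $k$ over $t\in(s-\tfrac12,s+\tfrac12)$ is about $\tfrac{q}{p}\sum_{j<s}(s-j)=\tfrac{q}{p}\,\tfrac{s(s-1)}{2}$. Combined with $q/p>1$ this gives $a_s\ge s-\tfrac12+2\cdot\tfrac{q}{p}s(s-1)$. That quantity exceeds $2s^2-1$ only after accounting for the contribution at level $j=s$ itself and the floor effects. The precise averaging of the floors is where I would lean on the explicit evaluation in \cite{Han}, checking that it uses only the geometry of $\Tpun$ and the lines $\ell^i_{p,q}$. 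I expect this to reproduce $a_1>1$, $a_2>7$, $a_3>17$, giving the stated coefficients $2,14,34$.
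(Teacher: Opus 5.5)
Your outline is the paper's. You average $2s-1+4k(x)$ over the $q$ intersections of $\bigcup_i\ell^i_{p,q}$ with a height-$s$ segment to get $a_s$. You then use $\Drel=+1$ at height $0$ and $n_s=n_{-s}$ in \eqref{eq:sumDelta} to obtain $n_0=\sum_{s\ge1}2n_sa_s$. Everything then reduces to $\bar k_s>s(s-1)/2$, which the paper quotes from Hanselman's averaging argument.

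The problem is how you set up that one substantive step. The marked points sit at the pegs, i.e.\ at half-integer heights. So a generator at height $t\in(s-\frac12,s+\frac12)$ lies above exactly the $s$ levels $\frac12,\frac32,\dots,s-\frac12$, and at level $j-\frac12$ the triangle has width $(t-j+\frac12)q/p$. Your count $\sum_{j=1}^{\lfloor t\rfloor}\lfloor (t-j)q/p+c_i\rfloor$ places the levels at integer heights. It therefore loses the level $s-\frac12$ on the lower half of the segment and shrinks every width by $\frac{q}{2p}$. That is why your heuristic stalls at $\frac qp\cdot\frac{s(s-1)}2$. Worse, for $s=1$ and $1<q/p<2$ it returns $k(x)=0$ for every $x$, since the only level it sees has width $(t-1)q/p<1$. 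Hence $a_1=1$, and the strict bound $a_1>1$ demanded by the statement does not follow from your formula unless $c_i$ is made to carry the missing half level. Separately, since $n_0=\sum 2n_sa_s$, the normalization must be $a_s=2s-1+4\bar k_s$; your expression $s-\frac12+2\frac qp s(s-1)$ mixes two normalizations.

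With the correct geometry, no further appeal to Hanselman's floor bookkeeping is needed. The intersections on a height-$s$ segment are at $t_m=s-\frac12+\frac mq+\varepsilon$ for $0\le m\le q-1$. Since $k(x)$ counts lifts of marked points in a planar triangle, the contribution of level $j-\frac12$ is at least $\lfloor((s-j)q+m)/p\rfloor$. Because $q>p$, this is at least $(s-j)+\lfloor m/p\rfloor$. Summing over $j$ and $m$,
\[
\sum_{m=0}^{q-1}k(x_m)\;\ge\;q\,\frac{s(s-1)}{2}+s\sum_{m=0}^{q-1}\lfloor m/p\rfloor\;>\;q\,\frac{s(s-1)}{2},
\]
where the last sum is positive because $q-1\ge p$. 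Hence $\bar k_s>s(s-1)/2$ and $a_s=2s-1+4\bar k_s>2s^2-1$, which is exactly the claim the paper's proof rests on.

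As a consistency check, for $p=1$ the count is exact: $\bar k_s=\frac{qs^2-s}{2}$ and $a_s=2qs^2-1$. Then $n_0=\sum_{s\ge1}2n_sa_s$ reproduces the formula for $q$ in Theorem~\ref{thm:Hanselman}(i).
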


\begin{proof}
This is the  averaging argument of \cite[Theorem~28]{Han}.  For a segment
of height $s$, the average number of marked points in the relevant triangles is
strictly larger than $s(s-1)/2$ when $p/q<1$.  Since
$\Drel=1-2s-4k$ for $s>0$, the average negative contribution has size
$a_s=2s+4\overline{k}-1>2s^2-1$.  Combining the positive height-$0$
contribution with the symmetric height $\pm s$ contributions in
\eqref{eq:sumDelta} gives \eqref{eq:rank-ineq-strong}.
\end{proof}

\begin{theorem}\label{thm:small-slopes}
If $K\subset Y$ is nontrivial and $Y_+\cong Y_-$ with $p/q<1$, then $p=1$.
\end{theorem}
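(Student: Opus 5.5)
We reduce to the small-slope half of Hanselman's Theorem 28. Assume $K\subset Y$ is nontrivial and $Y_{p/q}(K)\cong Y_{-p/q}(K)$ with $0<p/q<1$. Proposition~\ref{prop:adapted-NiWu} already gives three facts: the two slopes are $\pm p/q$ with the same $q$, $q^2\equiv -1 \pmod p$, and $\gamma_0$ is horizontal. So Proposition~\ref{prop:grading-shift} and the balancing identity \eqref{eq:sumDelta} are available. We argue by contradiction, supposing $p\ge 2$, and aim to produce a Spin$^c$-structure-level imbalance, exactly as in the large-slope case.

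\textbf{Step 1: split the Spin$^c$ structures.} Sort the Spin$^c$ structures $i\in\Z/p\Z$ according to where the line $\ell^i_{p,q}$ crosses the vertical segments at heights $0$ and $\pm s$. Since $p/q<1$, each line wraps the cylinder roughly $q/p$ times per unit of height. The count of intersections of $\ell^i_{p,q}$ with height-$s$ segments is then essentially uniform: it is $\lfloor q/p\rfloor$ or $\lceil q/p\rceil$ per segment. The triangle counts $k(x)$, however, grow quadratically in the number of wraps.

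\textbf{Step 2: compare ranks.} We want to show that when $p\ge 2$ the Spin$^c$ structures split into two classes of different size with different $\rk\HFhat$. Concretely, the structures meeting $\lceil q/p\rceil$ copies of the height-$0$ segments form one class, and those meeting $\lfloor q/p\rfloor$ copies form the other. The rank inequality \eqref{eq:rank-ineq-strong} ($n_0$ dominates $\sum 2n_sa_s$) guarantees that the height-$0$ contribution governs the rank ordering. Hence the homeomorphism $\phi$ must preserve each class.

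\textbf{Step 3: sum $d$-invariants over a preserved class.} Apply Proposition~\ref{cosmeticconstraint}(3) to both $Y_+$ and $Y_-$, the latter via the mirror. This gives
\[
\sum_{i\in S}\din(Y_{p/q}(K),i)=|S|\din(Y)+\sum_{i\in S}\din(L(p,q),i),
\]
and the analogous identity for $Y_-$ with the lens space term negated. Equating the two forces $\sum_{i\in S}\din(L(p,q),i)=0$ for the preserved set $S$. The set $S$ is an arithmetic-progression-type block determined by $q \bmod p$. We then contradict this vanishing using \cite[Lemma~23]{Han} together with $q^2\equiv-1\pmod p$; that lemma concerns lens space $d$-invariants and is independent of $Y$. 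If the rank classes happen to coincide, we instead sum $\Drel$ over one class. Using the averaged bound $a_s>2s^2-1$ from Lemma~\ref{lem:rank-ineq-strong}, the height-$0$ terms contribute $+1$ and the height-$\pm s$ terms contribute $1-2s-4k$. We would show the partial sum is strictly signed, which violates the class-wise version of \eqref{eq:sumDelta}.

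\textbf{Main obstacle.} The hard part is Step 2: making precise which Spin$^c$ classes $\phi$ must preserve, and checking that the class-wise balance really holds when $p\ge2$. This is the combinatorics of the slope lines $\ell^i_{p,q}$ against the pegs. We would follow the case analysis of \cite[Theorem~28]{Han} verbatim. That analysis is purely relative-grading and immersed-curve bookkeeping, and it transfers because of properties (I)--(IV). The only $Y$-dependence is the shift by $\din(Y)$, and that shift cancels in Step 3.
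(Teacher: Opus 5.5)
Your proposal is correct and follows the same route as the paper. You split the Spin$^c$ structures by whether $\ell^i_{p,q}$ receives the extra crossing of the height-$0$ segments; writing $q=ap+r$, this is the block $\{0,\dots,r-1\}$. The rank inequality \eqref{eq:rank-ineq-strong} then shows this block is rank-distinguished and hence preserved. Summing the $d$-invariant identities for $Y_\pm$ forces $\sum_{i=0}^{r-1}\din(L(p,q),i)=0$, which contradicts \cite[Lemma~23]{Han} together with $q^2\equiv-1\pmod p$. This is exactly the paper's argument.

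Your fallback case never arises. Since $a_s>1$ and $n>0$, \eqref{eq:rank-ineq-strong} gives $n_0>\sum_{s\neq0}n_s$, so the block $\{0,\dots,r-1\}$ always has strictly larger rank than its complement. The two classes need not differ in size, though: for $p=2$ both have one element.
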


\begin{proof}
Assume $p>1$ and write $q=ap+r$ with $0<r<p$.  Each line $\ell^i_{p,q}$ meets
a vertical segment either $a$ or $a+1$ times.  By
\eqref{eq:rank-ineq-strong}, the set of indices for which the height-$0$
segments receive the extra intersection is characterized by strictly larger
$\rk\HFhat$; after cyclic relabelling this set is
$\{0,\ldots,r-1\}$.  Thus $\{0,\ldots,r-1\}$ is preserved by $\phi$. Summing up
and subtracting the negative-surgery sum gives
\[
0=
\sum_{i=0}^{r-1}\bigl(\din(Y_+,i)-\din(Y_-,i)\bigr)
=2\sum_{i=0}^{r-1}\din(L(p,q),i).
\]
Since $r\equiv q\pmod p$ and $q^2\equiv -1\pmod p$, this contradicts
\cite[Lemma~23]{Han}. Therefore $p=1$.
\end{proof}

\subsection{Proof of Theorem~\ref{adaptedHanselman}}

\begin{proof}[Proof of Theorem~\ref{adaptedHanselman}]
Let $Y_r(K)\cong Y_{r'}(K)$ with $r\ne r'$.  By   Proposition~\ref{prop:adapted-NiWu}
then gives $q_1=q_2=q$, $\gamma_0$ horizontal, and
$q^2\equiv -1\pmod p$.

If $p/q>1$, Theorem~\ref{thm:large-slopes} gives $p/q=2$, $g(K)=2$, and
$n_0=2 n_1$, which is the $\{\pm2\}$ case in
Theorem~\ref{thm:Hanselman}.

If $p/q\le 1$, then either $p/q<1$ and Theorem~\ref{thm:small-slopes} gives
$p=1$, or $p/q=1$ and coprimality  gives $p=q=1$.  Thus the slopes are
$\{\pm1/q\}$.  
\end{proof}

\section{Surgery diagram and $d_{3}$ invariant for knots in integer homology sphere}
\normalfont

In this section, we prove that the $d_3$ invariant can be split into summands: the $d_3$ invariant of the starting manifold, which, in our case, is an $L$-space integer homology sphere, and the $d_3$ invariant of the manifold obtained after surgery on the knot. We first prove that the algebraic linking of our knot with the surgery diagram representation of our manifold is zero, and hence the $d_3$ invariant breaks into two summands.
\begin{lemma}
    Let us consider a contact surgery diagram of $(Y(K,r),\xi_r)$, which is obtained from a contact surgery diagram of $(Y,\xi)$ by adding a contact $r$-surgery on a Legendrian knot $L$ inside $Y$. Then, we can perform handle moves to ensure that $L$ has linking number $0$ with all components of the contact surgery diagram of $Y$.
\end{lemma}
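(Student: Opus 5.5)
We will use the fact that $Y$ is an integer homology sphere, so that $L$ is null-homologous in $Y$ and every linking number with the surgery link can be cleared by handle slides. Fix a contact surgery diagram $\mathbb{L}=L_1\cup\dots\cup L_k\subset(S^3,\xi_{\mathrm{std}})$ for $(Y,\xi)$. By the Ding–Geiges algorithm we may take it to consist of contact $(\pm1)$–surgeries. Let $Q$ be its smooth linking matrix, with diagonal entries $\tb(L_j)\pm1$. Since $H_1(Y;\Z)=\operatorname{coker}Q=0$, the matrix $Q$ is unimodular over $\Z$. Regard $L$ as a Legendrian knot in $S^3\setminus\mathbb{L}$, and let $v=(\operatorname{lk}(L,L_1),\dots,\operatorname{lk}(L,L_k))^T\in\Z^k$. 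The class of the meridian-free image of $L$ in $H_1(Y)$ is $[v]\in\operatorname{coker}Q$. Unimodularity then gives a unique integer vector $a=Q^{-1}v$.

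The plan is to slide $L$ over the components $L_j$ according to $a$. A handle slide of $L$ over $L_j$, done with the contact framing, changes the linking vector of $L$ by $\pm(\text{the } j\text{th column of } Q)$. Sliding $a_j$ times over each $L_j$, with the appropriate sign, therefore replaces $v$ by $v-Qa=0$. These slides do not change the components $L_j$ or their framings, so the diagram of $Y$ is untouched. They change only $L$ and its framing coefficient, which stays the Seifert-framed slope $r+\tb(L)$ measured in $Y$, since the slid knot is isotopic to $L$ in $Y$.

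In the contact category we will use the Legendrian handle slide of Ding–Geiges–Stipsicz. A Legendrian knot can be slid over a contact $(\pm1)$–surgery component using a Legendrian push-off band. The result is a Legendrian knot in the new diagram that is Legendrian isotopic to $L$ inside $(Y,\xi)$. The contact $r$–surgery coefficient relative to the contact framing is therefore preserved.

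The main obstacle is this last step. We must check that the contact handle slide exists for $(+1)$-components as well as $(-1)$-components, and that it realizes both signs of column addition. The sign of a slide can be changed by reversing the band orientation, which amounts to adding stabilizations appropriately and then compensating. Alternatively, we would first argue topologically, noting that any isotopy of $L$ in $Y$ crossing the cocores can be made Legendrian by the Legendrian isotopy realization for knots in the complement. If needed, we would instead cite that Legendrian isotopy classes in $(Y,\xi)$ are represented in any diagram for it. This gives a Legendrian representative whose linking vector is $0$.
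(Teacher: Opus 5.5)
This is essentially the paper's argument. Your observation that $a=Q^{-1}v$ is integral because $Q$ is unimodular (since $Y$ is an integer homology sphere) is exactly the hypothesis of Kim's Lemma~2.5, whose content is the column-by-column sliding you describe, and the contact step is, as in the paper, the Ding--Geiges Legendrian handle addition \emph{and} subtraction, which is precisely what settles the obstacle you flag.

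Two corrections. First, the parallel copy of $L_j$ used in each slide must have linking $\tb(L_j)\pm1$ with $L_j$ (the surgery framing, not the contact framing); only then does the linking vector change by a column of $Q$ and the slide give an isotopy in $Y$. Second, drop your fallback ideas: stabilizing changes the Legendrian isotopy class of $L$, and a smooth isotopy in $Y$ need not be realizable by a Legendrian isotopy.
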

\begin{proof}

Let $(Y,\xi)$ be presented by contact surgery on a Legendrian link
\[
\mathcal{L} = L_1 \cup \cdots \cup L_n \subset (S^3,\xi_{\mathrm{std}})
\]
with contact surgery coefficients $\phi = (\phi_1,\dots,\phi_n)$. Let $M_{(\mathcal{L},\phi)}$ denote the associated linking matrix. For any oriented knot $K \subset S^3 \setminus \operatorname{int}(\nu(\mathcal{L}))$, define its linking vector
\[
V_K = (\operatorname{lk}(K,L_1), \dots, \operatorname{lk}(K,L_n))^T \in \mathbb{Z}^n.
\]

Consider the additional Legendrian knot $L \subset Y$, represented in the diagram as a Legendrian knot in $S^3 \setminus \operatorname{int}(\nu(\mathcal{L}))$, and let $V_L$ denote its linking vector with $\mathcal{L}$.

By Kirby calculus arguments, Kim \cite{Kim} proves that there exists a knot $L' \subset S^3 \setminus \operatorname{int}(\nu(\mathcal{L}))$, obtained from $L$ by a sequence of handle slides over the components $L_i$, such that $L$ and $L'$ are isotopic in the surgered manifold $S^3(\mathcal{L},\phi) \cong Y$ and
\[
\operatorname{lk}(L',L_i) = 0 \quad \text{for all } i=1,\dots,n.
\]

 \begin{lemma}[{\cite[Lemma 2.5]{Kim}}]\label{lem:linking_zero}

Let $K \subset S^{3} \setminus \operatorname{int}(\nu(L))$ be an oriented link, and let $X_{K} \in \mathbb{Z}^{n}$ be a solution vector satisfying $M_{(L,\phi)}X_{K} = V_{K}$. Then there exists a link $K^{\prime} \subset S^{3} \setminus \operatorname{int}(\nu(L))$ obtained from $K$ by a sequence of slides over components of $L$ such that $K$ is isotopic to $K^{\prime}$ in the surgered manifold $S^{3}(L,\phi)$ and has a linking vector $V_{K^{\prime}} = 0$. Equivalently, this means $\operatorname{lk}(K^{\prime}, L_{i}) = 0$ for all $i$.
\end{lemma}

We now realize this construction in the contact category. Each smooth handle slide of $L$ over a component $L_i$ can be implemented as a contact handle slide, producing a Legendrian knot whose smooth type agrees with the usual handle slide and such that the resulting contact surgery diagram represents the same contact manifold. Refer to Figure~2 in \cite{DG01} for 2-handle  addition and subtraction. In particular, the effect on linking numbers coincides with the smooth case.

Applying the corresponding sequence of contact handle slides to $L$, we obtain a Legendrian knot $L'$ in the diagram which is isotopic to $L$ in $(Y,\xi)$ and satisfies
\[
\operatorname{lk}(L',L_i) = 0 \quad \text{for all } i.
\]

Replacing $L$ by $L'$ in the contact surgery diagram yields the desired form.

\end{proof}

To see why the $d_3$ invariant breaks into these two summands, recall that the $d_3$ invariant of a contact manifold obtained by Legendrian surgery on a link is determined by the topological invariants of the 4-manifold bounded by the respective surgery presentation. Specifically, Gompf's formula relies on the signature of the linking matrix, the Euler characteristic, and the rotation numbers of the components (which are evaluated using the inverse of the linking matrix). By Lemma \ref{lem:linking_zero}, the Legendrian representative of $K'$ has an algebraic linking number of precisely zero with every component of the background surgery link $L$. Consequently, the linking matrix of the combined link $L \cup K'$ is perfectly block-diagonal. 

Because there are no off-diagonal cross-terms connecting $L$ and $K'$, the algebraic computations are completely separate. The signature of the combined matrix is simply the sum of the signatures of the respective blocks, the Euler characteristic is additive, and the fractional $c^2$ terms (derived from the inverse of the block-diagonal linking matrix) strictly decouple. As a result, the global $d_3$ evaluation separates perfectly into the $d_3$ invariant of the original manifold plus the independent contribution arising from the surgery on $K'$. 

Hence, we only consider the diagrams from the link surgery in $S^3$.

\section{Limiting contact cosmetic surgeries}\label{obstruction}
In this section we prove Theorem~\ref{contactcosmeticthm}. It will be a direct consequence of Propositions~\ref{p1}, \ref{p2}, and~\ref{p3}, which deal with the case of Legendrian knots with $\tb=-1, -2,$ and less than $-2$, respectively, and the fact that Theorem~\ref{taubound} and~\ref{tauresult} show that the contact cosmetic surgery conjecture holds for Legendrian knots with $\tb\geq 0$.

Before proceeding, we make a few remarks.
For smooth $(\pm 2)$–surgeries, both the
contact surgery diagrams and the corresponding
$d_3$–computations are identical to those appearing
in the work of the third author and Etnyre in version 2 of \cite{ES}. In particular, the $d_3$–invariant does not
distinguish all of the resulting contact structures so we give the limiting conditions. For computations we urge the reader to refer to version 2 of \cite{ES} and we give the limiting conditions herein.

For contact $(\pm 1/n)$–surgeries, the $d_3$–invariant computations
and the corresponding surgery diagrams coincide with those given
in Version~1 of \cite{ES} on the arXiv. These arguments, together
with the appendix computations, were omitted from the published
version of that paper after the smooth cosmetic surgery conjecture
for the $(\pm 1/n)$ case was established in
\cite{DEL}. For completeness, we therefore restate
those computations here.

\begin{remark}
For the Legendrian unknot, the conclusions of the present paper
coincide with those obtained in the previous work of third author with Etnyre \cite{ES} in
$(S^3,\xi_{\mathrm{std}})$.
The essential difference is that, in the current setting,
the resulting contact manifolds are of the form
$M \# L(p,q),$
where $M$ denotes the fixed $\mathbb{Z}$–homology sphere under
consideration and $L(p,q)$ is a lens space determined by the
smooth surgery coefficient. Thus, just as in the case of $(S^3,\xi_{\mathrm{std}})$,
Legendrian unknots admit contact cosmetic surgeries.
\end{remark}

	%%%%%%%%%%%%%%%%%%%%%%%%%
	
 Now we prove our theorem.
	
	\begin{proposition}\label{p1}
		The contact cosmetic surgery conjecture holds for Legendrian knots with Thurston-Bennequin invariant $-1$ except possibly for $\pm 2$ surgery on a Legendrian knot $L$ in a prime knot type $K$ with $\tbb(K)=-1$, $\tau(K)=0$.
	\end{proposition}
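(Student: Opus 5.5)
Let $L\subset(Y,\xi)$ be a Legendrian knot with $\tb(L)=-1$ whose smooth type $K$ is not the unknot, and suppose two contact surgeries on $L$ with distinct smooth coefficients $r\neq r'$ give contactomorphic manifolds. The plan is to first reduce to the smooth constraints of Section 3 and Section \ref{sec:Hanselman-extension}, and then use $d_3$ to rule out the remaining cases.

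\emph{Smooth reduction.} Since contactomorphisms preserve orientation, $Y_K(r)\cong Y_K(r')$ is a truly cosmetic pair. Theorem~\ref{tauresult} then gives $\tau_Y(K)=0$. Corollary~\ref{corhedden} gives $\tb(L)+|\rot(L)|\le -1$, so $\rot(L)=0$ and $\tbb(K)=-1$; the latter holds because any representative with larger $\tb$ would violate Hedden's bound. Theorem~\ref{adaptedHanselman} then leaves only two possibilities: the smooth slopes are $\{\pm 2\}$ with $g(K)=2$, or they are $\{\pm 1/q\}$ for some $q\ge1$. Since $\tb(L)=-1$, smooth slope $\pm1/q$ corresponds to contact coefficient $\pm1/q+1$, and smooth $\pm2$ corresponds to contact coefficients $3$ and $-1$. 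I also need primeness. I would argue it through the connected-sum behaviour: $\tbb$ and $\tau$ are additive up to the standard $+1$ correction, and a cosmetic pair on a composite knot would violate the Heegaard Floer/Hanselman constraints summand-wise, as in the $S^3$ case of \cite{ES}. Alternatively, I would simply quote the corresponding statement from \cite{ES} for local summands.

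\emph{The $\pm1/q$ case.} Here I eliminate the case with $d_3$ (and, where helpful, the contact invariant). By the decoupling lemma of Section 5, after handleslides $L$ has zero linking with the diagram for $(Y,\xi)$. Hence $d_3$ of each surgered manifold is $d_3(\xi)$ plus the contribution of the surgery on $L$ computed as if in $S^3$. With $\rot(L)=0$, write the Ding–Geiges algorithm for contact $(1+1/q)$–surgery (a $(+1)$ surgery on $L$ followed by pushoffs) and for contact $(1-1/q)$–surgery (which for $q\ge2$ is a $(+1)$ surgery followed by $(-1)$ surgeries on stabilized pushoffs, with stabilization choices). Compute $c^TQ^{-1}c$, $\sigma(Q)$, $k$ and the number of $(+1)$ surgeries. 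The expected outcome, as in Version 1 of \cite{ES}, is that the signatures differ: the $+1/q$ side contributes more positive eigenvalues. This makes the two $d_3$ values differ for every choice of rotation numbers of the stabilized pushoffs, because $c^TQ^{-1}c$ is controlled in sign. For $q=1$ one side is contact $(+2)$ and the other contact $0$, and contact $0$-surgery is not a rational homology sphere, so that case is immediate.

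\emph{The $\pm2$ case.} Here the $d_3$ values computed in version 2 of \cite{ES} need not separate all choices, so I stop there and record the surviving exception: $\pm2$ surgery with $g(K)=2$, $\tau_Y(K)=0$ and $\tbb(K)=-1$. The main obstacle is the $1/q$ case. One must verify, uniformly in $q$ and in all stabilization/rotation choices, that the $d_3$ values never coincide. I would first try a direct signature count with the bound $|c^TQ^{-1}c|$ small relative to $3\sigma$, and fall back on the explicit formulas from \cite{ES} if that fails.
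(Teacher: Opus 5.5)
\textbf{There is a genuine gap.} Your smooth reduction matches the paper: Theorem~\ref{tauresult}, Corollary~\ref{corhedden} and Theorem~\ref{adaptedHanselman} give $\rot(L)=0$ and slopes $\{\pm2\}$ or $\{\pm1/q\}$, and the $\pm2$ case is deferred to \cite{ES}. But the one step that has to be proved here, ruling out $\pm1/q$ for $q\ge2$, is not carried out. The plan you give for it would not work as stated.

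\emph{Wrong diagram.} For contact $(1-1/q)$-surgery, one contact $(+1)$-surgery leaves the residual coefficient $(q-1)/(q-(q-1))=q-1>0$. So the Ding--Geiges algorithm needs \emph{two} $(+1)$-surgeries, on $L$ and a push-off, before any $(-1)$-surgeries. For $q=2$ the diagram is just the two $(+1)$-surgeries.

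\emph{Wrong heuristic.} For $q=2$ both $4$-manifolds have $\sigma=0$. For every $q$ both have exactly one positive eigenvalue ($\sigma(X_{-q})=2-q$ on $q$ generators, $\sigma(X_q)=0$ on $2$), so the $+1/q$ side carries no extra positive eigenvalues. More decisively, the bracket cancels completely:
\begin{itemize}
\item On the $-1/q$ side, only the once-stabilized push-off has rotation $\pm1$. So $c_1^2=(Q^{-1})_{3,3}=2-q$, $\chi=q+1$ and $\sigma=2-q$.
\item On the $+1/q$ side, $Q=\begin{bmatrix}0&-1\\-1&-2-q\end{bmatrix}$ has $(Q^{-1})_{2,2}=0$ and $\rot(L)=0$. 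So $c_1^2=0$ for every stabilization choice, with $\sigma=0$ and $\chi=3$.
\end{itemize}
Hence $c_1^2-3\sigma-2(\chi-1)=-4$ on both sides, and no estimate of $|c^TQ^{-1}c|$ against $3\sigma$ can separate them. The paper gets $d_3=-1+2=1$ versus $d_3=-1+1=0$. The difference comes entirely from the number of contact $(+1)$-surgeries, which is exactly the count your diagram gets wrong. You need this explicit computation, not a sign or size argument.

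\textbf{Two smaller points.}
\begin{itemize}
\item For $q=1$ your conclusion is right but your reason is not. Contact $(0)$-surgery on $L$ corresponds to smooth $\tb(L)=-1$ surgery, which is an integer homology sphere, not a non-rational-homology sphere. The case is excluded because contact $(0)$-surgery is not defined.
\item Your ``summand-wise'' primeness argument is not an argument. For instance, additivity only gives $\tau(K_1)+\tau(K_2)=0$, and a cosmetic pair on $K_1\#K_2$ does not induce one on a summand. The paper also gives no proof of primeness beyond its reference to \cite{ES}, so you should treat that clause as deferred rather than derived.
\end{itemize}
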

	\begin{proof}
		Let $L$ be a Legendrian knot with $\tb=-1$. Because we know that $\tau(L)$ must be zero if $L$ admits a cosmetic surgery by Theorem~\ref{tauresult} above and we have the Bennequin type bound in Theorem~\ref{taubound} above, we see that the rotation number of $L$ must be $0$. 
		
		Since $\pm 2$ case is identical to \cite{ES}, we now consider $\pm 1/n$ surgery on $L$. We note that we cannot consider $n=1$ since $-1$ surgery on $L$ would correspond to contact $(0)$ surgery, and this is not well-defined. So we assume $n\geq 2$. Surgery diagrams for these contact surgeries are shown in Figure~\ref{tbm1-general}.
		\begin{figure}[htb]{\footnotesize
				\begin{overpic}%[grid,tics=10] 
					{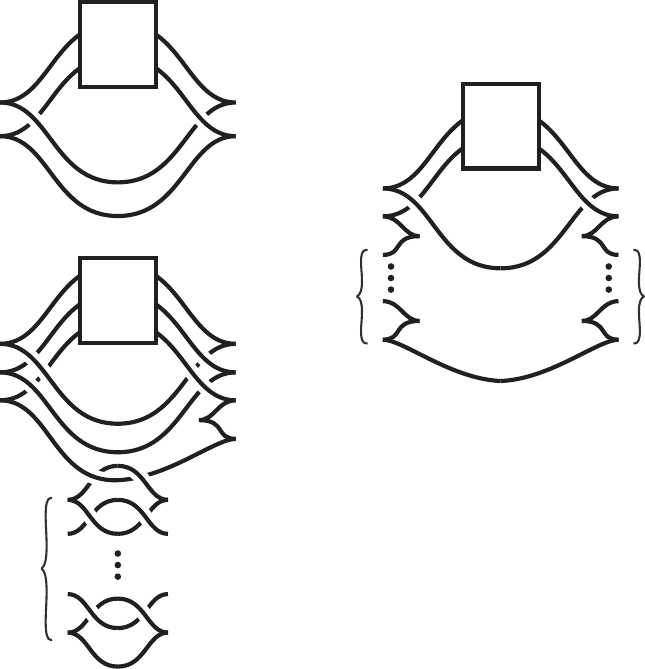}
					\put(50, 296){\Large $K$}
					\put(110, 278){$(+1)$}
					\put(110, 261){$(+1)$}
					\put(50, 172){\Large $K$}
					\put(110, 163){$(+1)$}
					\put(110, 146){$(+1)$}
					\put(110, 117){$(-1)$}
					\put(80, 84){$(-1)$}
					\put(80, 21){$(-1)$}
					\put(-8, 46){$n-3$}
					\put(234, 257){\Large $K$}
					\put(295, 238){$(+1)$}
					\put(295, 222){$(-1)$}
					\put(163, 177){$a$}
					\put(315, 177){$b$}
			\end{overpic}}
			\caption{For a Legendrian knot $K$ with $\tb=-1$ we see a smooth $-1/2$ surgery (that is contact $(1/2)$ surgery) and $-1/n$ surgery for $n>2$ (that is contact $((n-1)/n)$ surgery) on the upper and lower left, respectively, and a smooth $1/n$ surgery (that is a contact $((n+1)/n)$ surgery) on the right. On the right $a$ and $b$ are non-negative integers so that $a+b=n$ (that is the second knot is the Legendrian push-off of the first with $n$ stablizations of one sign or the other). This diagram is taken from Version~1 of \cite{ES} (arXiv).}
			\label{tbm1-general}
		\end{figure}
		
		We begin with $n=2$ and let $X_{\pm 2}$ be the $4$-manifolds given by the surgery diagram in the figure for $\pm 1/2$ surgery. The intersection matrix for $X_{-2}$ is 
		$$
		\begin{bmatrix}
			0 & -1 \\
			-1 & 0 
		\end{bmatrix}
		$$
		One may easily compute that the Euler characteristic is $\chi(X_{-2})=3$ and the signature is $\sigma(X_{-2})=0$. Moreover, since the rotation number of $L$ is zero we see that $c_2^2(X_{-2})=0$. To compute the $d_3$ invariant we note that there were two $+1$ contact surgeries in the diagram so
		\begin{align*}
			d_3(\partial X_{-2})&=\frac 14\left(c_1^2(X_{-2}) -3\sigma(X_{-2})-2(\chi(X_{-2})-1)\right)+2\\
			&=\frac 14 (0-0-4)+2=1.
		\end{align*}
		
		We now consider $X_2$ which has intersection matrix
		$$
		\begin{bmatrix}
			0 & -1 \\
			-1 & -4
		\end{bmatrix}
		$$
		One may easily compute that the Euler characteristic is $\chi(X_{-2})=3$ and the signature is $\sigma(X_{-2})=0$. The bottom Legendrian knot in the surgery diagram has rotation number either $-2,0,$ or $2$. One may check in all cases that $c_1^2(Q)=0$. Noting that there is a single contact $+1$ surgery in the diagram of $X_2$ we see that 
		\[
		d_3(\partial X_2)=\frac 14 (0-0-4)+1=0. 
		\]
		Thus $\pm 1/2$ surgery on a Legendrian with $\tb=-1$ can never give a cosmetic surgery. 
		
		We now turn to the case of $\pm 1/n$ surgery for $n>2$. Denote the $4$-manifold constructed in the surgery diagram for $\pm 1/n$ surgery by $X_{\pm k}$. The intersection matrix of $X_{-n}$ is given by 
		$$
		\begin{bmatrix}
			0 & -1 & -1 & & &  \\
			-1 & 0 & -1 & & &   \\
			-1 & -1 & -3 & -1& &   \\
			&  & -1 & -2 & \ddots & \\
			& &  & \ddots & \ddots  &-1\\
			&   &&&-1 & -2   \\
		\end{bmatrix}_{n \times n}\\
		$$
		One may easily compute that the Euler characteristic is $\chi(X_{-n})=n+1$. To compute the signature, we note that the $4$-manifold $X$ on the left of Figure~\ref{compsig} has the same signature as $X_{-n}$. The manifold $X'$ in the middle of the figure is obtained from $X$ by a $+1$ blowup. Thus $\sigma(X')=\sigma(X)+1$. The manifold $X''$ on the right of the figure is obtained from $X'$ by two $+1$ blowdowns. Thus $\sigma(X'')=\sigma(X')-2$. It is clear that $\sigma(X'')=-n+1$ and thus $\sigma(X_{-n})=\sigma(X)=-n+2$.
		\begin{figure}[htb]{\footnotesize
				\begin{overpic}%[grid,tics=10] 
					{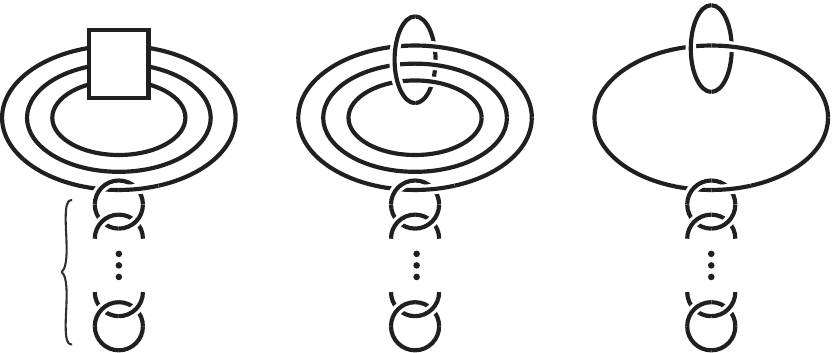}
					\put(49, 137){$-1$}
					\put(92, 105){$0$}
					\put(102, 100){$0$}
					\put(111, 94){$-3$}
					\put(73, 70){$-2$}
					\put(73, 5){$-2$}
					\put(5, 36){$n-3$}
					\put(210, 159){$1$}
					\put(235, 105){$1$}
					\put(245, 100){$1$}
					\put(254, 94){$-2$}
					\put(216, 70){$-2$}
					\put(216, 5){$-2$}
					\put(353, 159){$-1$}
					\put(397, 94){$-2$}
					\put(359, 70){$-2$}
					\put(359, 5){$-2$}
			\end{overpic}}
			\caption{Computing the signature for $X_n$. This diagram is taken from Version~1 of \cite{ES} (arXiv).} 
			\label{compsig}
		\end{figure}
		In Appendix~\ref{lac} we will prove the following lemma. 
		\begin{lemma}\label{computationfortbm1}
			For any choice of stablization in the definition of $X_{-n}$ we have $c_1^2(X_{-n})=2-n$. 
		\end{lemma}
		Now noting that there are two contact $(+1)$ surgeries in the surgery diagram for $X_{-n}$ we see 
		\[
		d_3(\partial X_{-n})=\frac 14((2-n)-3(-n+2)-2n) + 2= 1
		\]
		We now consider $X_n$. To this end, we note the intersection form of $X_n$ is 
		$$
		\begin{bmatrix}
			0 & -1 \\
			-1 & -2-n
		\end{bmatrix}
		$$
		So we easily see that $\chi(X_n)=3$, $\sigma(X_n)=0$, and as above we see that independent of the rotation number of the bottom Legendrian in the surgery diagram that $c_1^2(Q)=0$ and thus
		\[
		d_3(\partial X_n)=\frac 14 (0-0-4)+1=0.
		\]
		Since this does not agree with $d_3(\partial X_{-n})$ we see that there are no cosmetic surgeries on $L$ with surgery coefficients $\pm 1/n$. 
        \end{proof}

		We now turn to Legendrian knots with Thurston-Bennequin invariant $-2$.
		\begin{proposition}\label{p2}
			The contact cosmetic surgery conjecture holds for Legendrian knots with Thurston-Bennequin invariant $-2$. 
		\end{proposition}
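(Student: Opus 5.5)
By Theorem~\ref{tauresult} and Corollary~\ref{corhedden}, I would first pin down the classical invariants of $L$. If $L$ with $\tb(L)=-2$ admits a cosmetic contact surgery, then $\tau_Y(K)=0$, and Corollary~\ref{taucosmeticcontraint} gives $-2+|\rot(L)|\le -1$, so $|\rot(L)|\le 1$. Since $Y$ is an integer homology sphere, $\tb(L)+\rot(L)$ is odd, exactly as in $S^3$: it is computed from a Seifert surface in the same way. Hence $\rot(L)=\pm 1$. By the symmetry $L\mapsto -L$ (orientation reversal, which changes the sign of $\rot$ and leaves the contact surgeries unchanged) I may assume $\rot(L)=1$.

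Next I would restrict the slopes. By Theorem~\ref{adaptedHanselman}, the smooth slopes are $\{\pm 2\}$ or $\{\pm 1/n\}$. Contact $r$–surgery realizes smooth slope $r+\tb(L)=r-2$, so smooth $-2$ corresponds to contact $(0)$–surgery, which is not defined. This rules out the $\pm2$ case entirely; this is why the $g(K)=2$ exception of Proposition~\ref{p1} does not reappear here. What remains is to compare the following two families:
\begin{itemize}
\item smooth $1/n$, which is contact $\bigl((2n+1)/n\bigr)$–surgery;
\item smooth $-1/n$, which is contact $\bigl((2n-1)/n\bigr)$–surgery, with $n\ge1$ (for $n=1$ this is contact $(+1)$–surgery).
\end{itemize}

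Using the lemma of the previous section, I would slide $L$ so that it has linking number zero with the link presenting $(Y,\xi)$. Then $d_3$ splits as $d_3(\xi)$ plus the contribution of the diagram on $L$, and it suffices to compare the latter contributions, computed as in $S^3$.

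For each family I would write down the Ding–Geiges $(\pm1)$–surgery diagram, as in Figure~\ref{tbm1-general}.
\begin{itemize}
\item Contact $(2n+1)/n = 1+(n+1)/n$: one $(+1)$–surgery on $L$, then $(-1)$–surgeries on a push-off chain, the first push-off being stabilized. This is read off from the continued fraction of the negative coefficient $r'=r/(1-r)$.
\item Contact $(2n-1)/n$: one $(+1)$–surgery on $L$ followed by a contact $\bigl((n-1)/n\bigr)$–type tail. This gives a diagram analogous to the lower-left picture in Figure~\ref{tbm1-general}, but with framings shifted by one because $\tb=-2$.
\end{itemize}
For each diagram I would compute the linking matrix $Q$, $\chi$, and $\sigma(Q)$, using the same blow-up/blow-down trick as in Figure~\ref{compsig}. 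I would then compute $c^TQ^{-1}c$ with $c$ the vector of rotation numbers; the first entry is $\rot(L)=1$, and the later entries are $1\pm(\text{stabilizations})$.

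The main obstacle is the $c_1^2$ term over all choices of stabilizations, in analogy with Lemma~\ref{computationfortbm1}. I would first solve $Qx=c$ explicitly for the tridiagonal chain. I expect $c^TQ^{-1}c$ to depend on the stabilization choices only through a bounded quadratic quantity. The aim is to show that the resulting sets of $d_3$–values for the $+1/n$ and $-1/n$ families are disjoint. I would try to do this by showing the two sets lie in disjoint ranges, or differ modulo a fixed rational, once the number of $(+1)$–surgeries and the signatures are taken into account. If the full range does not separate them, I would refine the comparison using the $\Spin^c$/Euler class data $\mathrm{PD}(c_1)$, which must also match under a contactomorphism.
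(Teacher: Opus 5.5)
Your reduction steps coincide with the paper's. These are: $\rot(L)=\pm1$ from Theorem~\ref{tauresult} and Corollary~\ref{corhedden}; slopes $\{\pm2\}$ or $\{\pm1/n\}$ from Theorem~\ref{adaptedHanselman}; smooth $-2$ excluded as contact $(0)$–surgery; and $d_3$ reduced to the local diagram.

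The gap is that the proposal stops where the proof actually happens. You never compute the two sets of $d_3$–values, and two of the three separating mechanisms you suggest cannot work.
\begin{itemize}
\item Since $Y_{\pm1/n}(K)$ is an integer homology sphere, $H^2=0$, so the Euler class/$\Spin^c$ fallback carries no information.
\item Disjoint ranges also fail. For smooth $+1/n$ one gets $d_3\in\{0\}\cup\{n\pm i+1\}$, where $i\in\{n-1,n-3,\dots,1-n\}$ is the rotation number of the $\tb=-n$ unknot in the handle-slid diagram. These are the values $0,2,\dots,2n$. For smooth $-1/n$ one gets $d_3\in\{1,\,3-2n\}$, and $1$ lies inside that range.
\end{itemize}
What closes the argument in the paper is parity. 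Since $i\equiv n-1 \pmod 2$, every $+1/n$ value is even and every $-1/n$ value is odd; for $n=1$ this is $1$ versus $\{0,2\}$. Until you carry out the computation and identify this invariant, the statement is not proved.

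Separately, your diagram for contact $(2n-1)/n$–surgery is wrong: contact surgery coefficients do not split additively as $1+(n-1)/n$. For $n\ge2$, the Ding–Geiges algorithm gives contact $(+1)$ on $L$ followed by contact $r/(1-r)=-(2n-1)/(n-1)$ on a push-off. Concretely, these are $(-1)$–surgeries on a twice-stabilized push-off ($\tb=-4$) and on $n-2$ further unstabilized push-offs. After sliding, the linking matrix is the $n\times n$ matrix with diagonal $-1,-5,-2,\dots,-2$. It is negative definite by Lemma~\ref{negdefproof}, with $\chi=n+1$, $\sigma=-n$, and $c_1^2\in\{-n,\,8-9n\}$.

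A tail of ``contact $(n-1)/n$ type,'' as in the lower-left of Figure~\ref{tbm1-general}, has positive coefficient and carries its own $(+1)$–surgery. That would change both the count $q$ and the linking matrix $Q$ in the $d_3$ formula, so the values you would compute would not be those of the actual surgery.

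On the $+1/n$ side, your description (one $(+1)$, then a once-stabilized push-off and an $(n-1)$-times further stabilized push-off) is right. After a contact handle slide, the matrix has rows $(-1,-2,0)$, $(-2,-4,-1)$, $(0,-1,-n-1)$. It has $\det Q=1$, $\sigma=-1$, $\chi=4$, and $c_1^2\in\{-1,\,4n+3\pm4i\}$, which gives the even values above.
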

		\begin{proof}
			Let $L$ be a Legendrian knot with Thurston-Bennequin invariant $-2$. From Theorems~\ref{tauresult} and~\ref{taubound} we see that $\tb(L)=-2$ and that the rotation number of $L$ must be $\pm 1$. 
			
			Recall we only need to check that smooth $\pm 2$ and $\pm 1/n$ surgeries do not yield cosmetic surgeries. Note that smooth $-2$ surgery on $L$ is a contact $(0)$ surgery and so is not well-defined, so we are left to check $\pm 1/n$ surgeries. We see surgery diagrams for these in Figure~\ref{tbm2-surg}. 
			\begin{figure}[htb]{\footnotesize
					\begin{overpic}%[grid,tics=10] 
						{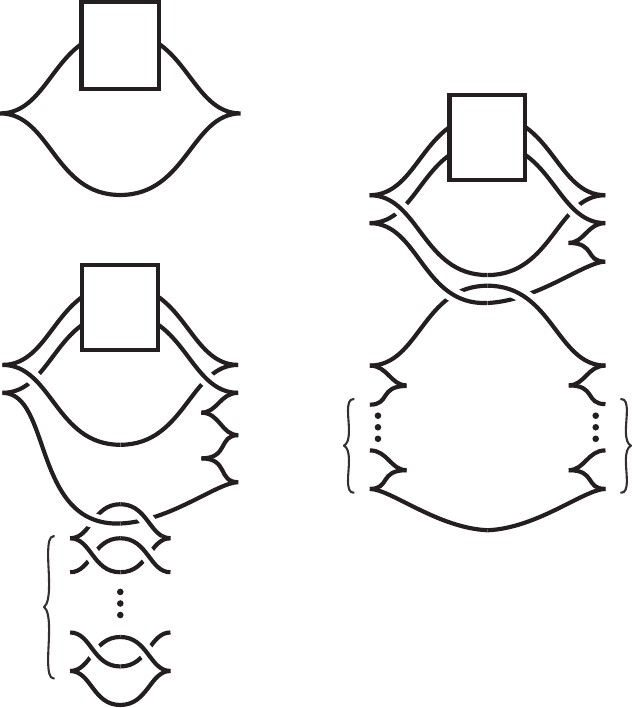}
						\put(51, 313){\Large $K$}
						\put(110, 292){$(+1)$}
						\put(51, 187){\Large $K$}
						\put(110, 170){$(+1)$}
						\put(110, 140){$(-1)$}
						\put(80, 85){$(-1)$}
						\put(80, 21){$(-1)$}
						\put(-8, 46){$n-2$}
						\put(228, 269){\Large $K$}
						\put(291, 251){$(+1)$}
						\put(291, 222){$(-1)$}
						\put(291, 170){$(-1)$}
						\put(158, 124){$a$}
						\put(308, 124){$b$}
				\end{overpic}}
				\caption{For a Legendrian knot $K$ with $\tb=-2$ we see a smooth $-1$ surgery (that is contact $(1)$ surgery) and $-1/n$ surgery for $n>1$ (that is contact $((2n-1)/n)$ surgery) on the upper and lower left, respectively (the stabilizations in the lower diagram can be of any sign), and a smooth $1/n$ surgery (that is a contact $((2n+1)/n)$ surgery) on the right. On the right $a$ and $b$ are non-negative integers so that $a+b=n-1$ (that is the second knot is the Legendrian push-off of the first with $n-1$ stablizations of one sign or the other). This diagram is taken from Version~1 of \cite{ES} (arXiv)}
				\label{tbm2-surg}
			\end{figure}
		
			We begin by considering the case when $n=1$. Let $X_{\pm 1}$ denote the manifold obtained from the surgery diagram for $\pm 1$ surgery on $L$. We see that the intersection matrix of $X_{-1}$ is 
			$\begin{bmatrix}
				-1 
			\end{bmatrix}
			$ and hence 
			$\chi(X_{-1})=2$ and  
			$\sigma(X_{-1})=-1$. Whether or not the rotation number of $L$ is $-1$ or $1$ we see $c_1^2(X_{-1})=-1$. Given that there is one contact $+1$ surgery in the diagram we can compute
			\begin{align*}
				d_3(\partial X_{-1})&=\frac 14\left(c_1^2(X_{-2}) -3\sigma(X_{-2})-2(\chi(X_{-2})-1)\right)+1\\
				&=\frac 14 (-1-3(-1)-2(1))+1=1.
			\end{align*}
			
			Moving to $X_1$ we note the intersection matrices of all the $X_n$ are similar 
			$$
			\begin{bmatrix}
				-1 & -2 &0\\
				-2 & -4 &-1\\
				0 & -1 &-n-1
			\end{bmatrix}
			$$
			It is clear that $\chi(X_{1})=4$. One may also compute that $\sigma(X_1)=-1$ and the inverse of the matrix is
			$$
			\begin{bmatrix}
				4n+3 & -2(n+1) &2\\
				-2(n+1) & n+1 &-1\\
				2 & -1 &0
			\end{bmatrix}
			$$
			Using this and the fact that the rotation number of the top Legendrian knot in the diagram is $\pm 1$, of the middle Legendrian knot is $\pm 2$ or $0$ (and the sign on the $2$ must match that of the $1$), and of the bottom Legendrian knot is $0$, we can compute that $c_1^2(X_1)$ is $7$ or $-1$ and hence $d_3= 2$ or $0$. In all these cases we see that this does not agree with $d_3(X_{-1})$ and so $\pm 1$ surgery cannot be a contact cosmetic surgery slope. 
			
			We now turn to the case of $\pm 1/n$ surgery for $n\geq 2$ and let $X_{\pm n}$ be the $4$-manifold given in the figure for $\pm 1/n$ surgery on $L$. The intersection form of $X_{-n}$ is given by 
			$$
			\begin{bmatrix}
				-1 & -2 & & &  \\
				-2 & -5 & -1 & &   \\
				& -1 & -2 & \ddots&    \\
				&  & \ddots & \ddots  &-1\\
				&&&-1 & -2   \\
			\end{bmatrix}_{n \times n}
			$$

			It is clear that $\chi(X_{-n})=n+1$ and in Appendix~\ref{lac} we will prove the following lemma.
			\begin{lemma}\label{tbm2computation}
				The intersection from of $X_{-n}$ is negative definite, so $\sigma(X_{-n})=-n$ and $c_1^2(X_{-n})=-9n+8$ or $-n$. 
			\end{lemma}
			Thus we can compute
			\[
			d_3(X_{-n})=\frac 14(-n+3n-2n)+1=1
			\]
			or 
			\[
			d_3(X_{-n})=\frac 14((-9n+8)+3n-2n)+1=-2n+3
			\]
			
			We now consider $X_n$. The intersection matrix of $X_n$ is given above, from which we compute that $\chi(X_n)=4$ and $\sigma(X_n)=-1$. 
			The possible rotation vectors are
			\[
			\mathbf{r}=
			\begin{bmatrix}
				\pm 1\\
				\pm 2\\
				i
			\end{bmatrix}
			\quad \text{or}\quad
			\begin{bmatrix}
				\pm 1\\ 
				0\\
				i
			\end{bmatrix}
			\]
			where $i=n-1, n-3, \ldots, -n+1$. We note that in the first vector, the signs must be the same. This leads to $c_1^2(X_n)= -1$ or $3+4n\pm 4i$.
			Thus we can compute 
			\[
			d_3(X_n)= \frac 14(-1+3-6)+1=0
			\]
			when $\mathbf{r}=
			\begin{bmatrix}
				\pm 1,
				\pm 2, 
				i
			\end{bmatrix}^T$ and
			\[
			d_3(X_n)=\frac 14(3+4n\pm 4i+3-6))+1=n\pm i+1
			\]
			when $\mathbf{r}=
			\begin{bmatrix}
				\pm 1,
				0, 
				i
			\end{bmatrix}^T$.
		
			We note that these invariants are always even (recall the restriction on $i$), thus they do not agree with the $d_3$-invariants for $X_{-n}$ and there are no $\pm 1/n$ cosmetic surgeries on a Legendrian knot with $\tb=-2$.
		\end{proof}
		
		We finally consider Legendrian knots with $\tb<-2$. 
		\begin{proposition}\label{p3}
			The contact cosmetic surgery conjecture holds for Legendrian knots with Thurston-Bennequin invariant less than $-2$. 
		\end{proposition}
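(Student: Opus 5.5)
Let $L$ be a Legendrian knot with $\tb(L)=t\le -3$ and $\rot(L)=r$. By Theorem~\ref{tauresult} and Corollary~\ref{taucosmeticcontraint}, any cosmetic pair forces $\tau_Y(K)=0$ and $t+|r|\le -1$. By Theorem~\ref{adaptedHanselman}, the smooth slopes must be $\{\pm 2\}$ or $\{\pm 1/n\}$. The plan is to write down explicit contact $(\pm1)$-surgery diagrams for each of these slopes, compute $d_3$ of the results, and show the values for the positive and negative slopes never coincide.

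By the splitting lemma of the previous section (Kim's handle slides), $d_3$ of the surgered manifold equals $d_3(Y,\xi)$ plus the $d_3$ contribution of the diagram on $L$ alone. So we may compute exactly as in $(S^3,\xi_{\mathrm{std}})$, and the common term $d_3(Y,\xi)$ cancels when comparing the two slopes.

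\textbf{Case $\pm 2$.} Both smooth slopes have contact coefficients
\[
\pm 2 - t \;\ge\; -2+3 \;=\; 1 .
\]
The positive slope gives contact coefficient $2-t\ge 5$. The negative slope gives $-2-t\ge 1$, with equality exactly when $t=-3$. For a contact coefficient $c>0$, I will use the Ding--Geiges algorithm: a contact $(+1)$-surgery on $L$, followed by contact $(-1)$-surgeries on stabilized push-offs realizing the coefficient $c/(1-c)$ via its continued fraction. Both slopes then become diagrams with one $(+1)$ surgery and a negative definite chain of $(-1)$ surgeries. The intersection matrices are $2\times 2$ (or at most small chains), so $\sigma$, $\chi$ and $c^TQ^{-1}c$ are direct to compute. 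I expect $d_3$ to come out as an affine function of $t$, $r$ and the choices of stabilization signs, as in version 2 of \cite{ES}. The goal is then a parity or size mismatch: for example, show that the $-2$ side always gives $d_3$ values strictly smaller than the $+2$ side, using $|r|\le -1-t$.

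\textbf{Case $\pm 1/n$.} The contact coefficients are $1/n - t$ and $-1/n - t$, both greater than $2$. Each is realized by one $(+1)$ surgery on $L$, one $(-1)$ surgery on a push-off stabilized $|t|-1$ or $|t|-2$ times, and a tail of $(-1)$ surgeries on unstabilized push-offs of length about $n-1$. This mirrors Figures~\ref{tbm1-general} and~\ref{tbm2-surg}, but with extra stabilizations on the second component. The linking matrices are tridiagonal, with a top $2\times 2$ block of the form
\[
\begin{bmatrix} t+1 & t \\ t & t-1-k \end{bmatrix}
\]
followed by a $(-2)$-chain. I will compute $\sigma=-(\text{size})+\epsilon$ by blowups as in Figure~\ref{compsig}. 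The term $c^TQ^{-1}c$ I will compute by a recursive continued-fraction (determinant) formula, in the manner of Lemmas~\ref{computationfortbm1} and~\ref{tbm2computation}, leaving the details to the appendix.

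\textbf{Main obstacle.} The hardest step is controlling $c_1^2$ over all stabilization choices. The rotation numbers range over sets such as $\{r\pm k, r\pm(k-2),\dots\}$, so each slope gives a whole family of $d_3$ values. We must show the two families are disjoint. I would first try parity: compute $d_3 \bmod 1$ or $\bmod 2$ for each family, since the $\tb=-2$ case succeeded by showing one side is always even and the other odd. If parity fails for some $t$, I would instead compare ranges, bounding the maximum of the negative-slope family below the minimum of the positive-slope family using $t\le -3$ and $|r|\le -1-t$.
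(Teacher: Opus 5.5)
Your overall architecture matches the paper's: reduce to the slopes $\pm2$ and $\pm1/n$ via Theorems~\ref{tauresult} and~\ref{adaptedHanselman}, split off $d_3(Y,\xi)$ by the linking-zero lemma, draw Ding--Geiges diagrams, and compare $d_3$. However, the proposal leaves the decisive step undone, and the $\pm1/n$ diagrams you describe are wrong.

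\textbf{The $\pm1/n$ diagrams.} Put $k=-t\ge 3$.
Contact $(kn+1)/n$-surgery gives $r/(1-r)=-\frac{kn+1}{(k-1)n+1}=[\underbrace{-2,\dots,-2}_{k-1},-(n+1)]$.
Contact $(kn-1)/n$-surgery ($n\ge2$) gives $[\underbrace{-2,\dots,-2}_{k-2},-3,\underbrace{-2,\dots,-2}_{n-2}]$.
So the first push-off is stabilized exactly once, giving top block $\begin{bmatrix} t+1&t\\ t&t-2\end{bmatrix}$. The unstabilized chain has length growing with $k$, and $n$ enters only at the end of the chain. This is Figure~\ref{1unsurgeriesgeneral}, with $k+1$ and $k+n-2$ components respectively. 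Your version (first push-off stabilized $|t|-1$ or $|t|-2$ times, then about $n-1$ unstabilized push-offs) has the continued fraction reversed and does not present $\pm1/n$ surgery. For example, with $t=-3$, $n=2$, two stabilizations and one further push-off, the linking matrix has determinant $-4$, i.e.\ smooth coefficient $-4/3$. Similarly, for $\pm2$ the chains have length $k\pm2$, not $2$, and $d_3$ is quadratic in $\rot(L)$, not affine.

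\textbf{Disjointness for $\pm1/n$.} This step, which you leave as a plan, is where the content lies. Your parity idea works uniformly here, and you should say why.
The surgered manifolds are integral homology spheres, so $Q$ is unimodular. The rotation vector $c$ is characteristic, since $\rot(L_i)\equiv\tb(L_i)+1\equiv Q_{ii}\pmod 2$. Van der Blij then gives $c^TQ^{-1}c\equiv\sigma(Q)\pmod 8$. Substituting into $4d_3=c^TQ^{-1}c-3\sigma-2b_2+4q$ yields $d_3\equiv q+b^+(Q)\pmod 2$.
Both diagrams have $q=1$. In the basis $e_0,\ e_1-e_0,\ e_2-e_1,\dots$ the Schur complement of the negative-definite tail is $\begin{bmatrix} t+1&-1\\-1&s\end{bmatrix}$ with $s<0$. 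By the slam-dunk its determinant is $s\,(t+1-1/s)=s\,r_{\mathrm{sm}}$, where $r_{\mathrm{sm}}=\pm1/n$.
Hence $b^+=0$ for $-1/n$ and $b^+=1$ for $+1/n$, so $d_3$ is odd on one side and even on the other. This replaces Lemmas~\ref{lem1}--\ref{lem11} and the paper's Mathematica case analysis entirely.

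\textbf{Disjointness for $\pm2$.} The parity argument is unavailable here because $H_1=\Z/2$. Your fallback that ``the $-2$ side is always smaller'' is false. Take $t=-3$, $\rot(L)=2$, and the stabilized push-off of rotation $3$: the $L$-parts have $d_3=\tfrac14$ for $+2$ and $\tfrac34$ for $-2$. With $\rot(L)=0$ instead you get $\tfrac74$ and $\tfrac54$, so neither side is uniformly smaller.
What works is the exact difference. Let $\rho=\rot(L)$, and let $\epsilon,\epsilon'\in\{\pm1\}$ be the rotation change of the once-stabilized push-off in the $+2$ and $-2$ diagrams. Then
\[
4\bigl(d_3^{(+2)}-d_3^{(-2)}\bigr)=\rho^2-\bigl((k+1)\epsilon+(k-3)\epsilon'\bigr)\rho+(k-1)^2-2 .
\]
This equals $(\epsilon\rho-(k-1))^2-2$ if $\epsilon'=\epsilon$, and $(\epsilon\rho-2)^2+(k-1)^2-6$ if $\epsilon'=-\epsilon$. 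Neither vanishes for $k\ge3$. The paper does not carry this out; it defers $\pm2$ to version~2 of \cite{ES}.
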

		\begin{proof}
      
			Recall we only need to consider the smooth surgeries $\pm 2$ and $\pm 1/n$ by Theorem~\ref{adaptedHanselman}.

			Since $\pm 2$ case is identical to \cite{ES}, we now turn to $\pm 1/n$ surgeries on $L$. They are indicated in Figure~\ref{1unsurgeriesgeneral}.  It will be convenient to consider $\pm 1$ surgeries first and denote the $4$-manifolds given in the figure by $X_{\pm 1}$. 
			\begin{figure}[htb]{\footnotesize
					\begin{overpic}
						{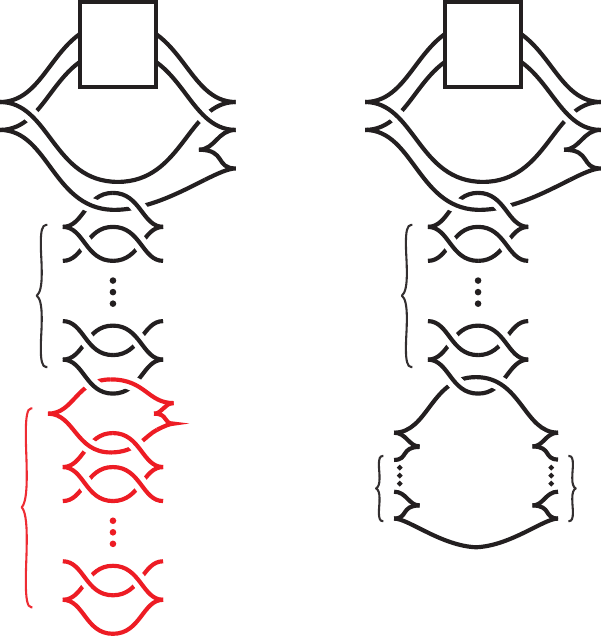}
						\put(51, 280){\Large $K$}
						\put(115, 265){$(+1)$}
						\put(115, 232){$(-1)$}
						\put(80, 190){$(-1)$}
						\put(80, 131){$(-1)$}
						\put(80, 85){\color{red}$(-1)$}
						\put(80, 15){\color{red}$(-1)$}
						\put(86, 105){\color{red}$(-1)$}
						\put(-8, 160){$k-3$}
						\put(-14, 59){\color{red}$n-1$}
						\put(225, 280){\Large $K$}
						\put(291, 265){$(+1)$}
						\put(291, 232){$(-1)$}
						\put(255, 190){$(-1)$}
						\put(255, 131){$(-1)$}
						\put(169, 160){$k-2$}
						\put(171, 68){$a$}
						\put(282, 68){$b$}
				\end{overpic}}
				\caption{For a Legendrian knot $K$ with $\tb=-k<-2$ we see a smooth $-1/n$ surgery (that is contact $((kn-1)/n)$ surgery) on the left, (the stabilizations in the lower diagram can be of any sign). The red portion of the diagram should be ignored fo $n=1$. On the right, we see smooth $1/n$ surgery (that is a contact $((kn+1)/n)$ surgery) on the right. On the right $a$ and $b$ are non-negative integers so that $a+b=n-1$ (that is the second knot is the Legendrian push-off of the first with $n-1$ stablizations of one sign or the other). This diagram is taken from Version~1 of \cite{ES} (arXiv).}
				\label{1unsurgeriesgeneral}
			\end{figure}
			
			We begin with $-1$ surgeries. From the surgery diagram, we see that the intersection matrix is
			$$ 
			\begin{bmatrix}
				-k+1 & -k &  & &   \\
				-k & -k-2 & -1 & &   \\
				& -1 & -2 & \ddots&    \\
				&  & \ddots &\ddots & -1  \\
				&   &&-1 & -2   \\
			\end{bmatrix}_{l \times l}
			$$
			where $l=k-1$. From this we can easily determine that $\chi(X_{-1})=k$ and in Appendix~\ref{lac} we will prove the following lemma.
			\begin{lemma}\label{lem1}
				The signature of $X_{-1}$ is $\sigma(X_{-1})=-k+1$ and 
				\[
				c_1^2(X_{-1})=-i^2\pm 2(k-2)i -k^2+3k-2,
				\]
				where $i=k-1,k-3,\ldots, -k+1$. 
			\end{lemma}
			Thus we can compute
			\[
			d_3(\partial X_{-1})=\frac 14\left( -i^2\pm 2(k-2)i-k^2+4k-3\right)+1.
			\]
			
			Turning to $+1$ surgery we see the intersection matrix of $X_1$ is given by the above matrix with $l=k+1$. From this we see that $\chi(X_1)=k+2$ and in Appendix~\ref{lac} we will prove the following lemma. 
			\begin{lemma}\label{lem2}
				The signature of $X_{1}$ is $\sigma(X_{1})=-k+1$ and 
				\[
				c_1^2(X_{1})=i^2\mp 2ki+k^2-k,
				\]
				where $i=k-1,k-3,\ldots, -k+1$. 
			\end{lemma}
			Thus we can compute
			\[
			d_3(\partial X_1)=\frac 14\left(i^2\mp 2ki +k^2-5\right)+1.
			\]
			Using the quadratic equation (or Mathematica) to solve $d_3(\partial X_{-1})=d_3(\partial X_1)$ for $i$ yields no integer solutons. So there are no contact cosmetic surgeries with smooth surgery coefficients $\pm 1$ when $\tb(L)<-2$. 
			
			We finally turn to $\pm 1/n$ surgery when $n>1$. Let $L$ be a Legendrian knot with $\tb(L)=-k<-2$. We let $X_{\pm n}$ denote the $4$-manifold described in Figure~\ref{1unsurgeriesgeneral} with boundary $\pm 1/n$ surgery on $L$.
			
			The intersection matrix of $X_{-n}$ is given by 
			$$
			\begin{bmatrix}
				-k+1 & -k &  & &  & \\
				-k & -k-2 & -1 & &   &\\
				& -1 & -2 & -1&   & \\
				&  &  \ddots &\ddots &  \ddots&\\
				&&  &  -1 &-2 &  -1&\\
				&&&  &  -1 &-3 &  -1&\\
				&&&&  &  -1 &-2 &  -1&\\
				&&&&	&  &  \ddots &\ddots &  \ddots&\\
				&&&&&   &&-1 & -2& -1  \\
				&&&&&   &&&-1 & -2  \\
			\end{bmatrix}_{k+n-2 \times k+n-2}
			$$ 
			where the $-3$ occurs in the $(k,k)$ entry. 
			So it is clear that $\chi(X_{-n})=k+n-1$. From Theorem~\ref{tauresult} and~\ref{taubound} we know that the rotation number of a Legendrian with $tb=-k<-2$ is $i=k-1,k-3,\ldots, -k+1$. So the rotation vector for the surgery diagram is $\mathbf{r}=\begin{bmatrix}i,i\pm 1, 0, \cdots 0,  j, 0 \cdots, 0\end{bmatrix}^T$ where the second $j$ is in the $k^{th}$ entry and takes the values $\pm 1$. In Appendix~\ref{lac} we will prove the following lemma.

			\begin{lemma}\label{lem10}
				The signature of $X_{-n}$ is $\sigma(X_{-n})=-k-n+2$ and 
			
				$$c_1^2(X_{-n})=-n +1+(1-k)(-1+(k-1)n)-j2(-1)^k(n-1)(-i\pm k \mp 1)\pm 2i(-1+n(k-1))-ni^2.$$
			
			\end{lemma}
			Thus we can compute, for $j=1$ 
			\begin{align*}
				d_3(\partial X_{-n})=&\frac 14 \left(-n +1+(1-k)(-1+(k-1)n)-j2(-1)^k(n-1)(-i\pm k \mp 1)+\right.\\ 
				&  \left. \pm 2  i ((k-1) n-1)-n i^2+ k+n-2 \right) +1 
			\end{align*}

			The intersection matrix of $X_n$ is given by 
			$$
			\begin{bmatrix}
				-k+1 & -k &  & &  & \\
				-k & -k-2 & -1 & &   &\\
				& -1 & -2 & \ddots&   & \\
				&  &  \ddots &\ddots &  \ddots&\\
				&   &&\ddots & -2& -1  \\
				&   &&&-1 & -n-1  \\
			\end{bmatrix}_{k+1 \times k+1}
			$$
			So it is clear that $\chi(X_n)=k+2$. Notice that the rotation vector for $X_{n}$ is $$\mathbf{r}=\begin{bmatrix}i,i\pm 1, 0, \cdots, 0,s\end{bmatrix}^T.$$  
			In Appendix~\ref{lac} we will prove the following lemma.
			
			\begin{lemma}\label{lem11}
				The signature of $X_n$ is $\sigma(X_n)=-k+1$ and
			
				$$c_1^2(X_{n})=-1+k-2 k n \left(2 i^2\pm 3 i+1\right)+nk^2 (1\pm 2i)^2+2s (-1)^k (i\mp k\pm 1) +n (i\pm 1)^2\mp2 i.$$
			\end{lemma}
			Thus we can compute $d_3$

			\begin{align*}
				d_3(\partial X_n)=&\frac 14 \left(-1+k-2 k n \left(2 i^2\pm 3 i+1\right)+nk^2 (1\pm 2i)^2+ 2s(-1)^k(i\mp k\pm 1)+\right.\\ 
				&  \left.n (i\pm 1)^2\mp2 i+ k+5 \right) +1 
			\end{align*}

			We use Mathematica to solve $d_3(\partial X_{-n})=d_3(\partial X_n)$. For ease, we split the computation into cases based on the parity of $k$, the sign of $j$, and the sign of the stabilizations of the push-off of the Legendrian knot (these are the $\pm$ in the formulas). Consider the case when $k$ is even, $j=1$, and the stabilizations are positive. Solving for $n$ we get 
			$$n=\frac{ks+k-i s-i-s+1}{2k^2 i^2+2 k^2 i+k^2-2ki^2-4ki-k+i^2+i}.$$ 
			We then impose the condition that $-n<s<n$ (recall that $s$ is the rotation number of a Legendrian unknot with $\tb=-n$ in the surgery diagram and for Legendrian unknots $\tb<\rot<-\tb$). There are no integral solutions in our range. For the other cases, a similar analysis also shows there are no solutions.
			
			So there are no contact cosmetic surgeries with smooth surgery coefficients $\pm \frac{1}{n}$ when $\tb(L)<-2$. 
		\end{proof}
	
		\appendix
		\section{Linear algebra computations}\label{lac}
		
		We recall a few facts about matrices. Given an invertible $n\times n$ matrix $A$ with entries $a_{i,j}$, denote its inverse by $B$ with entries $b_{i,j}$. We can compute the entry $b_{i,j}$ as follows
		\[
		b_{i,j}=(-1)^{i+j}\frac{\det A_{j,i}}{\det A}
		\]
		where $A_{i,j}$ is the $(i,j)$ minor of $A$, that is the $(n-1)\times (n-1)$ matrix obtained from $A$ by deleting the $i^{th}$ row and $j^{th}$ column. 
		
		The following matrix will appear frequently in our discussion
		\[
		I_n=
		\begin{bmatrix}
			-2 & -1 & &   \\
			-1 & -2 & \ddots&    \\
			
			& \ddots & \ddots  &-1\\
			&&-1 & -2   \\
		\end{bmatrix}_{n \times n}
		\]
		as will the matrix
		\[
		I'_n=
		\begin{bmatrix}
			-1&-1&0&\cdots &0\\
			0& -2 & -1 & &   \\
			0&-1 & -2 & \ddots&    \\
			
			\vdots&  & \ddots & \ddots  &-1\\
			0& &&-1 & -2   \\
		\end{bmatrix}_{n \times n}
		\]
		One may easily compute that $\det I'_n=-\det I_{n-1}$.
		\begin{lemma}
			The determinant of $I_n$ is 
			\[
			\det I_n= (-1)^n(n+1).
			\]
		\end{lemma}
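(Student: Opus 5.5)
We prove $\det I_n=(-1)^n(n+1)$ by induction on $n$, using the standard three-term recurrence for tridiagonal determinants. Expanding $\det I_n$ along the first row gives two contributions. The $(1,1)$ entry $-2$ multiplies the minor obtained by deleting the first row and column, which is exactly $I_{n-1}$. The $(1,2)$ entry $-1$ comes with sign $(-1)^{1+2}=-1$ and multiplies the minor obtained by deleting the first row and second column. That minor has first column $(-1,0,\dots,0)^T$; expanding it along that column gives $(-1)\det I_{n-2}$. Altogether
\[
\det I_n \;=\; -2\det I_{n-1} \;+\; (-1)(-1)(-1)\det I_{n-2} \;=\; -2\det I_{n-1}-\det I_{n-2}.
\]

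For the base cases we have $\det I_1=-2=(-1)^1\cdot 2$ and $\det I_2=4-1=3=(-1)^2\cdot 3$. It is also convenient to set $\det I_0=1$, which matches the formula and makes the recurrence valid already at $n=2$.

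For the inductive step, assume the formula holds for $n-1$ and $n-2$. Then
\[
-2(-1)^{n-1}n-(-1)^{n-2}(n-1)=(-1)^n\bigl(2n-(n-1)\bigr)=(-1)^n(n+1),
\]
which is the claimed formula for $n$.

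The only step that needs care is the sign bookkeeping in the second cofactor. The three factors $-1$ (the matrix entry, the cofactor sign, and the entry in the secondary expansion) multiply to $-1$, not $+1$. An alternative check avoids this: note that $-I_n$ is the Cartan matrix of type $A_n$, whose determinant is known to be $n+1$. Since $\det(-I_n)=(-1)^n\det I_n$, this gives the formula directly.

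The remark $\det I'_n=-\det I_{n-1}$ follows in the same way: expand $I'_n$ along its first column, whose only nonzero entry is $-1$ in position $(1,1)$.
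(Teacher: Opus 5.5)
Your proof is correct and follows essentially the paper's argument: the paper also expands along the first row to get $\det I_n=-2\det I_{n-1}-(-1)\det I'_{n-1}$ and, with $\det I'_{n-1}=-\det I_{n-2}$, runs the induction from the base cases $n=1,2$. This is exactly your recurrence $\det I_n=-2\det I_{n-1}-\det I_{n-2}$. Your Cartan-matrix check for type $A_n$ is a valid independent confirmation that the paper does not use.
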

		\begin{proof}
			It is clear that $\det I_1=-2$ and $\det I_2=3$. Using the formula
			\[
			\det I_n=-2\det I_{n-1} -(-1)\det I'_{n-1},
			\]
			one may easily establish the formula via induction. 
		\end{proof}
		
		We will frequently use the following lemma.
		\begin{lemma}\label{negdefproof}
			Given a matrix of the form
			\[
			M=\begin{bmatrix}
				A & B\\
				B^T & I_n
			\end{bmatrix}
			\]
			where $I_n$ is the matrix above
			\[
			A=\begin{bmatrix}
				a&b\\
				b&c
			\end{bmatrix}
			\]
			and
			\[
			B=\begin{bmatrix}
				0&0&\ldots &0\\
				-1&0&\ldots &0
			\end{bmatrix}
			\]
			is a $2\times n$ matrix. Then 
			\[
			\det M = (-1)^n((a(c+1)-b^2)n+(ac-b^2)).
			\]
			Thus if $ac-b^2$ is positive and $a(c+1)-b^2$ is non-negative, then $M$ is negative definite.
		\end{lemma}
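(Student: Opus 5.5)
We need to prove $\det M = (-1)^n\bigl((a(c+1)-b^2)n+(ac-b^2)\bigr)$, and then deduce negative definiteness from the sign conditions.

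\emph{Determinant.} We expand along the first row, which has only the nonzero entries $a$ and $b$ in the first two columns.
\begin{itemize}
\item The $(1,1)$ minor is the matrix $\begin{bmatrix} c & e_1^T\cdot(-1)\\ -e_1 & I_n\end{bmatrix}$, i.e.\ an $(n+1)\times(n+1)$ tridiagonal matrix with first diagonal entry $c$, followed by the $I_n$ block, coupled through a $-1$ in positions $(1,2)$ and $(2,1)$.
\item The $(1,2)$ minor has first row $(b,0,\dots,0)$, so it expands immediately to $b\det I_n$.
\end{itemize}
Hence
\[
\det M = a\,D_n(c) - b^2\det I_n,
\]
where $D_n(c)$ denotes the determinant of that tridiagonal matrix. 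Expanding $D_n(c)$ along its first row gives
\[
D_n(c)=c\det I_n-\det I_{n-1}.
\]
Using the earlier lemma $\det I_m=(-1)^m(m+1)$, this becomes
\[
D_n(c)=(-1)^n\bigl(c(n+1)+n\bigr).
\]
Therefore
\[
\det M=(-1)^n\bigl(ac(n+1)+an-b^2(n+1)\bigr)=(-1)^n\bigl((a(c+1)-b^2)n+ac-b^2\bigr),
\]
as claimed. The only care needed is the sign bookkeeping in the cofactor expansion and the convention $\det I_0=1$ when $n=1$; I would check $n=1$ directly.

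\emph{Negative definiteness.} I would use Sylvester's criterion: a symmetric matrix is negative definite iff its leading principal minors $\Delta_m$ satisfy $(-1)^m\Delta_m>0$ for all $m$. For $M$, reorder so the natural chain is built from the $I$-block end. It is cleaner to take trailing principal minors: the lower-right $m\times m$ blocks are $I_m$, which have $(-1)^m\det I_m=m+1>0$. The $(m+1)$ block obtained by adding the row and column of $c$ is $D_m(c)$, and the full matrix is $M$. Since reversing the order of basis vectors preserves definiteness, Sylvester applies to the chain
\[
I_1,\ I_2,\ \dots,\ I_n,\ D_n(c),\ M.
\]
The required signs are:
\begin{itemize}
\item $(-1)^{n+1}D_n(c)=-(c(n+1)+n)>0$;
\item $(-1)^{n+2}\det M=(a(c+1)-b^2)n+(ac-b^2)>0$.
\end{itemize}
The second follows from the hypotheses $ac-b^2>0$ and $a(c+1)-b^2\ge 0$.

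\emph{Main obstacle.} The first condition, $c(n+1)+n<0$, is not literally among the stated hypotheses, so I expect it to be the main obstacle. I would derive it as follows.
\begin{itemize}
\item Since $ac-b^2>0$, we have $ac>0$, so $a$ and $c$ share a sign.
\item In the intended applications the diagonal entries are negative; I would additionally check that $a<0$ is forced. If $a,c>0$, then $a(c+1)-b^2 > ac-b^2>0$, and the $2\times2$ block $A$ would be positive definite, contradicting negativity. So in that case the lemma must implicitly assume $a<0$, and I would state this.
\item With $a<0$: the hypothesis $a(c+1)\ge b^2\ge 0$ gives $c+1\le 0$, i.e.\ $c\le -1$. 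Then $c(n+1)+n\le -1<0$, as needed.
\end{itemize}
Thus, under the implicit assumption that $a<0$, all the minors in the chain have the alternating signs required by Sylvester's criterion, and $M$ is negative definite.
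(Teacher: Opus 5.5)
Your proof is correct, and it differs from the paper's in both halves.

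For the determinant, the paper uses the Schur complement $\det M=\det I_n\,\det(A-BI_n^{-1}B^T)$. It therefore needs only the corner entry $(I_n^{-1})_{11}=\det I_{n-1}/\det I_n=-n/(n+1)$, which replaces $c$ by $c+\frac{n}{n+1}$ in $A$. You instead expand by cofactors and reduce to the tridiagonal recursion $D_n(c)=c\det I_n-\det I_{n-1}$. The two are equally short. One small slip: it is the first \emph{column} of the $(1,2)$ minor that is $(b,0,\dots,0)^T$, while its first row is $(b,-1,0,\dots,0)$. The value $b\det I_n$ is still right.

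For definiteness, the paper only invokes Sylvester's criterion with leading principal minors. The natural way to finish that route is to observe that the leading $(k+2)\times(k+2)$ block of $M$ has the same shape, with $I_k$ in place of $I_n$. The formula just proved then gives $(-1)^{k+2}\det M_{k+2}=(a(c+1)-b^2)k+(ac-b^2)>0$ for every $1\le k\le n$. Only $M_1=a$ and $M_2=ac-b^2$ remain to check, which is why the hypotheses are phrased as they are. Your trailing chain $I_1,\dots,I_n,D_n(c),M$ is equally valid. It costs one extra inequality, $c(n+1)+n<0$, which you correctly derive from $a<0$ and $a(c+1)\ge b^2$.

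Either route needs the sign of $a$. You are right that the statement omits $a<0$: for example $a=c=1$, $b=0$ satisfies both stated hypotheses, yet $M$ has a positive diagonal entry. The paper's proof leaves this assumption implicit as well. In all the paper's applications $a$ is $-1$ or $-k+1$, so the gap is harmless there.
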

		\begin{proof}
			If $D$ is an invertible matrix then the determinant of a matrix of the form 
			$$
			\begin{bmatrix}
				A & B  \\
				C & D  \\
			\end{bmatrix}
			$$
			is $\det(D)\det(A-BD^{-1}C),$ where $A$ and $D$ are square matrices of size $i$ and $n-i$ and $B$ is ($i\times n-i$) matrix and $C$ is ($n-i \times i$) matrix. From this we see that 
			\[
			\det M= \det I_n \det (A-BI^{-1}_nB^T).
			\]
			One may easily see that $BI_nB^T$ is the matrix $\begin{bmatrix}
				0&0\\ 0& a_{1,1}
			\end{bmatrix}$
			where $a_{1,1}$ is the upper-left entry in $I_n^{-1}$. From the formula recalled at the beginning of this appendix we see that $a_{1,1}=\det I_{n-1}/\det I_{n}=-n/(n+1)$. Thus
			\[
			\det M= (-1)^n (n+1) [(a(c+n/(n+1))-b^2]=(-1)^n((a(c+1)-b^2)n+(ac-b^2)n)
			\]
			as claimed.
			
			For the second claim, we recall Sylvester's criterion \cite[Theorem~7.2.5]{HJ} says that a matrix $M$ is negative definite if and only if $(-1)^{k}\det M_k>0$ where $M_k$ is the $k^{th}$ principle minor, that is it is the $k\times k$ submatrix in the upper left corner of $A$. 
		\end{proof}

		We now move to the proof of Lemma~\ref{computationfortbm1}. Recall this lemma claims that the intersection matrix of $X_{-n}$ shown on Page~\pageref{computationfortbm1} has signature $-n+2$ and $c_1^2(X_{-n})=2-n$. 
		\begin{proof}[Proof of Lemma~\ref{computationfortbm1}]\label{opev}

			We begin by showing that $\det Q=(-1)^{n-1}$. We use the formula for the determinate of a block matrix as in Lemma \ref{negdefproof} where 
			\[ A=
			\begin{bmatrix}
				0 & -1 & -1 \\
				-1 & 0 & -1   \\
				-1 & -1 & -3   \\
			\end{bmatrix} \\
			\] 
			$D= I_{n-3}$ and $B$ is as in the lemma. 
			
			We note the rotation vector for $X_{-n}$ is $\mathbf{r}=\begin{bmatrix}0,0,\pm 1, 0, \cdots ,0\end{bmatrix}^T$. For any $n\times n$ matrix $A$ it is clear that $\mathbf{r}^TA\mathbf{r}$ is simply $(\det A)^{-1}a_{3,3}$ where $a_{i,j}$ is the $(i,j)$ entry of $A$. If we denote the intersection matrix of $X_{-n}$ by $Q$ then $c_1^2(X_{-n})$ is the $(3,3)$ entry of $Q^{-1}$ and from the formula above this is simply $(-1)^{n-1}\det Q_{3,3}$. Since $Q_{3,3}$ is a block diagonal matrix we easily see that $\det Q_{3,3}=(-1)\det I_{n-3}=(-1)^n(n-2)$ and thus $c_1^2(X_{-n})=2-n$. 
		\end{proof}
		
		We now turn to the proof of Lemma~\ref{tbm2computation} which says the intersection matrix of $X_{-n}$ (for $\tb=-2$ Legendrian knots) shown on Page~\pageref{tbm2computation} is negative definite and $c_1^2(X_{-n})=-9n+8$ or $-n$.
		\begin{proof}[Proof of Lemma~\ref{tbm2computation}]
			The fact that the intersection matrix fo $X_{-n}$ is negative definite follows directly from Lemma~\ref{negdefproof}. 
			
			The rotation vector for $X_{-n}$ is $\mathbf{r}=\begin{bmatrix}i,j, 0,\ldots, ,0\end{bmatrix}^T$ where $i=\pm 1$ and if $i=1$ then $j$ could be $3, 1,$ or $-1$ while if $i=-1$ then $j$ could be $-3, -1,$ or $1$. For any symmmetric matrix $A$ the quantity $\mathbf{r}^TA\mathbf{r}=i^2a_{1,1}+ 2ija_{1,2} + j^2a_{2,2}$. Thus if $Q$ is the intersection matrix for $X_{-n}$ and $Q'$ is its inverse then $c_1^2(X_{-n})= i^2q'_{1,1}+ 2ijq'_{1,2} + j^2q'_{2,2}$. Now we compute
			\begin{align*}
				q'_{1,1} &= (-1)^{2}(det Q)^{-1} \begin{vmatrix}
					-5 & -1 & &   \\
					-1 & -2 & \ddots&    \\
					& \ddots & \ddots  &-1\\
					&&-1 & -2   \\
				\end{vmatrix}_{n-1 \times n-1}\\%%%
				&=(-1)^n \left( -5 \det I_{n-2}
				+\det I'_{n-2}\right)\\%%%
				&=(-1)^n((-1)^{n-1}5(n-1)-(-1)^{n-1}(n-2))\\
				&=-(5n-5-n+2)=3-4n
			\end{align*}
			and
			\begin{align*}
				q'_{2,1}&= (-1)^3(det Q)^{-1}  \begin{vmatrix}
					-2  & -1 & &   \\
					& -2 & \ddots&    \\
					& \ddots & \ddots  &-1\\
					&&-1 & -2   \\
				\end{vmatrix}\\%%%
				&=(-1)^{n+1}(-2\cdot \det I_{n-2})=(-1)^{n+1}(-1)^{n-2}(2-2n)\\
				&=2n-2
			\end{align*}
			and
			\begin{align*}
				q'_{2,2}&= (-1)^4(det Q)^{-1}  \begin{vmatrix}
					-1 &  & & &  \\
					& -2 & -1 & &   \\
					& -1 & -2 & \ddots&    \\
					&  & \ddots & \ddots  &-1\\
					&&&-1 & -2   \\
				\end{vmatrix}\\%%%
				&= (-1)^n(-1\cdot \det I_{n-2})=1-n.
			\end{align*}
			From this we compute that $c_1^2(X_{-n})=-9n+8$ or $-n$. 
		\end{proof}

			$$
			\begin{bmatrix}
				-k+1 & -k &  & &   \\
				-k & -k-2 & -1 & &   \\
				& -1 & -2 & \ddots&    \\
				&  & \ddots &\ddots & -1  \\
				&   &&-1 & -2   \\
			\end{bmatrix}_{l+1 \times l+1} or 
			\begin{bmatrix}
				-k-2 & -1 & &   \\
				-1 & -2 & \ddots&    \\
				& \ddots &\ddots & -1  \\
				&&-1 & -2   \\
			\end{bmatrix}_{l+1 \times l+1}
			$$
		%	We may use Lemma~\ref{negdefproof} to show that the first matrix has determinant $(-1)^{l-1}(k^2+(2k-l)-2)$ which clearly has sign $(-1)^{l+1}$ (recall that $l$ is less than $k$). The second matrix is easily seen to be negative definite, so it must have determinant $(-1)^{l+1}$ too by Sylvester's criterion mentioned above. This completes the induction for $l$ up to $k$. When $l$ is $k$, then the inductive step is the same except for the two non-block matrices of size $(k+1)\times (k+1)$. These matrices are as in the previous equations except $l+1=k+1$ in this case. One may easily compute the determinant of the first matrix is $(-1)^k$ which has the wrong sign, but the determinant of the second matrix is $(-1)^{k+1}(k^2+2k+2)$. So their sum has sign $(-1)^{k+1}$ as desired. Finally, one may compute that $E_{k+2}(Q)=\det Q=(-1)^{k+1}$.
			
			%The computation of $c_1^2(X_2)$ is identical to the proof of Lemma~\ref{tbleqm3andm2surgery} except for the $q'_{i,j}$ we have
%			\[
		%	q'_{1,1}=\frac 12(k^2+2k+2), \quad q'_{1,2}=-\frac 12(k^2+k),  \text{and } \quad q'_{2,2}=\frac 12 (k^2-1).
			%\]
		%\end{proof}
		
		Turning to the proof of Lemma~\ref{lem1} we recall that the lemma claims that 
		the signature of $X_{-1}$ is $\sigma(X_{-1})=-k+1$ and 
		$
		c_1^2(X_{-1})=-i^2\pm 2(k-2)i -k^2+3k-2,
		$
		where $i=k-1,k-3,\ldots, -k+1$. 
		\begin{proof}[Proofs of Lemmas~\ref{lem1}]
			The fact that the intersection matrix of $X_{-1}$ is negative definite follows from Lemma~\ref{negdefproof}. The computation of $c_1^2$ is identical to the proof of Lemma 4.4 in \cite{ES} except in Lemma~\ref{lem1} the $q'_{i,j}$ are
			\[
			q'_{1,1}=-k^2+k+1, \quad q'_{1,2}= k^2-2k,  \text{and } \quad q'_{2,2}= -k^2+3k-2,
			\]
		\end{proof}
		
		We next prove Lemma~\ref{lem2} that says the signature of $X_{1}$ is $\sigma(X_{1})=-k+1$ and 
		$c_1^2(X_{1})=i^2\mp 2ki+k^2-k,$
		where $i=k-1,k-3,\ldots, -k+1$. 
		
		\begin{proof}[Proofs of Lemmas~\ref{lem2}]
			The intersection matrix of $X_{1}$  has only one positive eigenvalue.  The proof for this is very similar to the one given in the proof of Lemma 4.4 in \cite{ES} and is left to the reader. The computation of $c_1^2$ is identical to the proof of Lemma 4.4 in \cite{ES} except in Lemma~\ref{lem10} the 
			$q'_{i,j}$ are
			\[
			q'_{1,1}=k^2+k+1, \quad q'_{1,2}= -k^2,  \text{and } \quad q'_{2,2}= k^2-k.
			\]
		\end{proof}
		
		Recall Lemma~\ref{lem10} says that the signature of $X_{-n}$ is $\sigma(X_{-n})=-k-n+2$ and 
		$$c_1^2(X_{-n})=-n +1+(1-k)(-1+(k-1)n)-j2(-1)^k(n-1)(-i\pm k \mp 1)\pm 2i(-1+n(k-1))-ni^2$$
		\begin{proof}[Proofs of Lemmas~\ref{lem10}]
			The intersection matrix of $X_{-n}$ is negative definite by Lemma~\ref{negdefproof}. The computation of $c_1^2$ is as follows: 
			$$i^2 q'_{1,1} +2i(i\pm1)q'_{1,2}\pm 2iq'_{1,k}+ (i\pm 1)^2q'_{2,2}\pm 2(i\pm 1)q'_{2,k}+q'_{k,k}.$$

			The required $q'_{i,j}$ are:
			\[
			q'_{1,1}=-k^2n+k+1, \quad q'_{1,2}=(k)(kn-1-n),  \quad q'_{2,2}=(-k+1)(kn-1-n), \] \[  q'_{1,k}=(-1)^k(k)(-1+n),  \quad q'_{2,k}=(-1)^k(-k+1)(-1+n) \quad \text{and } \quad q'_{k, k}=-n+1.\]
			$\mathbf{r}=\begin{bmatrix}i,i\pm 1, 0, \cdots 0,  j, 0 \cdots, 0\end{bmatrix}^T$. This gives
			$$c_1^2(X_{-n})=-n +1+(1-k)(-1+(k-1)n)-j2(-1)^k(n-1)(-i\pm k \mp 1)\pm 2i(-1+n(k-1))-ni^2$$
			as claimed.
		
		\end{proof}
		
		We finally turn to the proof of Lemma~\ref{lem11} which claims that the signature of $X_n$ is $\sigma(X_n)=-k+1$ and
	
		$$c_1^2(X_{n})=-1+k-2 k n \left(2 i^2\pm 3 i+1\right)+nk^2 (1\pm 2i)^2+2s (-1)^k (i\mp k\pm 1) +n (i\pm 1)^2\mp2 i.$$
		\begin{proof}[Proof of Lemmas~\ref{lem11}]

			The intersection matrix of $X_{n}$  has only one positive eigenvalue. The proof for this is very similar to the one given in the proof of Lemma 4.4 in \cite{ES} and is left to the reader.
			
			The computation of $c_1^2$ is as follows: 
			$$i^2 q'_{1,1} +2i(i\pm 1)q'_{1,2}+2isq'_{1,k+1}+ (i\pm 1)^2q'_{2,2}+ 2(i \pm 1)sq'_{2,k+1}.$$

			The required $q'_{i,j}$ are:		\[
			q'_{1,1}=k^2n+k+1, \quad q'_{1,2}=-k^2n+kn-k,  \quad q'_{2,2}=(k-1)^2n+k-1, \] \[  q'_{1,k+1}=(-1)^k(k),  \quad q'_{2,k+1}=(-1)^k(-k+1) \quad \text{and } \quad q'_{k+1, k+1}=0.\]
			
			We note there is one difference in the computation. The rotation vector for $X_{n}$ is $\mathbf{r}=\begin{bmatrix}i,i\pm 1, 0, \cdots, 0,s\end{bmatrix}^T$. But since we can compute that $q'_{k+1,k+1}=0$ the last term will not contribute. This gives  
			$$c_1^2(X_{n})=-1+k-2 k n \left(2 i^2\pm 3 i+1\right)+nk^2 (1\pm 2i)^2+2s (-1)^k (i\mp k\pm 1) +n (i\pm 1)^2\mp2 i$$
		\end{proof}
		
\bibliographystyle{alpha}
\bibliography{references}

\end{document}